\documentclass[12pt]{amsart}
\usepackage{dsfont}
\usepackage{amsfonts}
\usepackage{amsthm}
\usepackage{amssymb}
\usepackage{mathtools}
\usepackage{amsmath}
\usepackage{mathrsfs}
\usepackage{mathrsfs}
\usepackage{stmaryrd}
\usepackage{framed}
\usepackage{nccmath}
\usepackage{wrapfig}
\usepackage{enumitem}
\usepackage{geometry}
\usepackage{tikz,tikz-cd}
\usepackage{bm}
\usepackage[hidelinks]{hyperref}
\usepackage[stable]{footmisc}
\usepackage[new]{old-arrows}
\usepackage{draftwatermark}
\usepackage{quiver}

\usepackage{amsrefs}

\usepackage{lipsum}

\newcommand\blfootnote[1]{%
	\begingroup
	\renewcommand\thefootnote{}\footnote{#1}%
	\addtocounter{footnote}{-1}%
	\endgroup
}

\SetWatermarkText{DRAFT - GSN - 2022}
\SetWatermarkColor[gray]{0.95}
\SetWatermarkScale{0.4}
\geometry{a4paper,total={170mm,257mm},left=20mm,top=20mm,}

\NewDocumentCommand{\tens}{t_}
{%
	\IfBooleanTF{#1}
	{\tensop}
	{\otimes}%
}
\NewDocumentCommand{\tensop}{m}
{%
	\mathbin{\mathop{\otimes}\displaylimits_{#1}}%
}

\DeclareMathOperator{\Spec}{Spec}

\newcommand{\prtt}[1]{\left( #1 \right)}
\newcommand{\tA}[2]{#1 \tens_{R} #2}

\newcommand{\bN}{{\mathbb{N}}}

\newcommand{\sub}{\subseteq}
\newcommand{\ten}{\otimes}

\newcommand{\fq}{\mathfrak{q}}

\newcommand{\id}{\mbox{id}}

\newcommand{\spec}{\mbox{Spec}}
\newcommand{\Frac}{\mbox{Frac}}

\newcommand{\ideal}[1]{\left\langle #1 \right\rangle}

\theoremstyle{plain}

\newtheorem{defi}{Definition}[section]
\numberwithin{defi}{section}

\newtheorem{prop}[defi]{Proposition}
\newtheorem{teo}[defi]{Theorem}
\newtheorem{cor}[defi]{Corollary}
\newtheorem{lema}[defi]{Lemma}
\newtheorem{example}[defi]{Example}

\begin{document}
	
	\title{The contraction property on the relative weak normalization and Lipschitz saturation of algebras }
	\author{Thiago da Silva}
	\date{}
	
	\maketitle
	
	\begin{abstract}
		Inspired by the results obtained in \cite{SR}, in this work, we develop techniques to handle the contraction property for the relative weak normalization and Lipschitz saturation of algebras for the following types of algebras: universally injective, integral, radicial, and unramified. \blfootnote{2020 \textit{Mathematics Subjects Classification} 13B22, 14B07
			
			\textit{Key words and phrases.}Weak normalization, contraction, Lipschitz saturation}
	\end{abstract}

	\tableofcontents

	\section*{Introduction}
	
	The notion of weak normalization was defined by Andreotti and Bombieri in \cite{AB} in the context of algebraic varieties over an algebraically closed field $k$. In \cite{Ma1}, Maranesi dealt with this structure in a language closer to commutative algebra, while investigating issues related to seminormalization, developed in \cite{GT}. 
	
	For an integral extension of rings $A\sub B$, Maranesi \cite{Ma1} defined $$\widetilde{A}_B:=\{b\in B\mid b\ten_A1-1\ten_Ab\in\sqrt{\ideal{0_{B\ten_AB}}}\},\footnote{Originally, Maranesi used the notation $\widehat{A}_B$.}$$
	
	\noindent and in \cite[Theorem I.6]{Ma}, she demonstrated this definition agrees with the weak normalization introduced by Andreotti and Bombieri in \cite{AB}. 
	
	Supposing that $A\sub B$ is an integral extension of $k$-algebras over a field $k$, in \cite{adkins}, Adkins extended Maranesi's definition of weak normalization of $A$ in $B$ over $k$ as $$\widetilde{A}_{B,k}:=\{b\in B\mid b\ten_k1-1\ten_kb\in\sqrt{\ker\varphi}\},\footnote{In his work, Adkins used the notation $A^*_B$.}$$
	
	\noindent where $\varphi:B\ten_kB\rightarrow B\ten_AB$ is the canonical map. 
	
	Meanwhile, in \cite{PT}, in the case of complex analytic algebras, Pham and Teissier defined the Lipschitz saturation of the local ring of a complex analytic variety on its normalization. In \cite{L}, Lipman extended this definition of Lipschitz saturation for a sequence of ring morphisms $R\rightarrow A\rightarrow B$ and defined what he called the \textit{relative Lipschitz saturation of $A$ in $B$}, denoted by $A^*_{B, R}$. In that work, Lipman developed several properties concerning this structure. A special one is the property of contraction. For a diagram of ring morphisms \[\begin{tikzcd}
		R & A & B \\
		{R'} & {A'} & {B'}
		\arrow[from=1-1, to=1-2]
		\arrow[from=1-2, to=1-3]
		\arrow[from=2-1, to=2-2]
		\arrow[from=2-2, to=2-3]
		\arrow[from=1-1, to=2-1]
		\arrow["{f}", from=1-3, to=2-3]
		\arrow[from=1-2, to=2-2]
	\end{tikzcd}\]
	
	\noindent in \cite[Property 7, p. 794]{L} Lipman showed that $A^*_{B,R}\sub f^{-1}((A')^*_{B',R'})$ and the equality holds if: 
	
	\begin{itemize}
		\item $R'=R$;
		
		\item $A=A'$;
		
		\item $f$ is faithfully flat or $f$ is integral with $\ker f$ nil ideal.
	\end{itemize}
	
	In \cite{SR}, the authors generalized the above results showing that the above diagram contracts working under conditions related to universal injectivity and integrality of some of the involved vertical morphisms.
	
	Inspired by what Lipman did for the Lipschitz saturation in \cite{L}, we will extend the definition of weak normalization for an arbitrary sequence of ring morphisms $R\rightarrow A\rightarrow B$, and we will call it \textit{relative weak normalization}.
	
	Having the extended definition of weak normalization at hand, the main objective of this work is to adapt the techniques used in \cite{SR} to obtain similar results for the weak normalization of $A$ in $B$ in the scenario proposed by Lipman, i.e., for a sequence of ring morphisms $R\rightarrow A\rightarrow B$, where $R$ is an arbitrary ring and $A\rightarrow B$ is not necessarily an inclusion.\footnote{In this work, the term \textit{ring} means a commutative ring with unit.}
	
	This work is organized as follows. In Section 1, we will review some definitions and known results; we also prove some auxiliary results for the construction of the main theorems and we introduce the Maranesi diagrams. In Section 2, we will show that the condition of universal injectivity together with \textit{radiciality} will allow us to obtain the contraction property in Maranesi diagrams in Theorem \ref{202410241447}. In Section 3, we investigate the contraction property for the integrality condition in Theorem \ref{202410241734}, and from this theorem we will conclude that weak normalization commutes with the quotient by an ideal. Finally, in Section 4 we prove some contraction properties for Maranesi and Lipman diagrams in the case of radicial and unramified algebras in Theorems \ref{202410262329},  \ref{202410271947} and \ref{202410280007}.

	\section{Preliminaries}

	In this section, we recall some basic definitions and results concerning the relative Lipschitz saturation, and we introduce the relative weak normalization of algebras. Let
	
	\begin{align*}
		R \overset{\tau}{\longrightarrow} A \overset{g}{\longrightarrow} B.
	\end{align*}
	\noindent be a sequence of ring morphisms, which induces algebra structures on $A$ and $B$. The \textit{diagonal map} from $B$ to its tensor product with itself over $R$ is defined as 
	\begin{align*}
		\Delta : B &\longrightarrow \tA{B}{B} \\
		b &\longmapsto b\ten_R1_B-1_B\ten_Rb.
	\end{align*}

	It is straightforward to check the following properties of $\Delta$:
	
	\begin{enumerate}
		\item $\Delta(b_1+b_2)=\Delta(b_1)+\Delta(b_2), \forall b_1,b_2\in B$;
		
		\item $\Delta(rb)=r\Delta(b), \forall r\in R$ and $b\in B$;
		
		\item (Leibniz rule) $\Delta(b_1b_2)=(b_1\ten_R 1)\Delta(b_2)+(1\ten_Rb_2)\Delta(b_1), \forall b_1,b_2\in B$. 
	\end{enumerate}
	
	The universal property of the tensor product ensures the existence of a unique $R$-algebras morphism $$\varphi: \tA{B}{B} \longrightarrow B\tens_{A}B$$
	
	\noindent which maps $x\otimes_R y\mapsto x\otimes_A y$, for all $x,y\in B$, that is known as the \textit{canonical morphism}. Besides, in  \cite[8.7]{kleiman}one can see that $$\ker\varphi=\langle ax\ten_R y-x\ten_Ray\mid a\in A\mbox{ and }x,y\in B \rangle.$$
	
	Notice that $ax\ten_R y-x\ten_Ray=(x\ten_Ry)(g(a)\ten_R1-1\ten_Rg(a))$. Therefore,$$\ker\varphi=\langle g(a)\ten_R1-1\ten_Rg(a)\mid a\in A\rangle=\Delta(g(A))(B\ten_RB),$$
	
	Thus, $\ker\varphi$ is the ideal of $B\ten_RB$ generated by the image of $\Delta\circ g$. Let us recall the definition of the relative Lipschitz saturation of algebras.

	\begin{defi}\cite[Def. 1.2]{L}
		The Lipschitz saturation of $A$ in $B$ relative to $R \overset{\tau}{\rightarrow} A \overset{g}{\rightarrow} B$ is the set
		\begin{align*}
			A^*_{B,R} := \left\{x \in B \mid \Delta(x) \in \overline{\ker \varphi}\right\}=\Delta^{-1}(\overline{\ker\varphi}).
		\end{align*}
	\end{defi}
	If $A^*_{B,R} = g(A)$ then $A$ is said to be saturated in $B$. Let us review some fundamental facts regarding relative Lipschitz saturation.
	
	\begin{prop}\cite[Property 1]{L}\label{prop_satsubanel}
		$A^*_{B,R}$ is an $R$-subalgebra of $B$ which contains $g(A)$.
	\end{prop}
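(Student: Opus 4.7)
The plan is to verify each of the requirements in turn, using the three algebraic properties of $\Delta$ listed just above together with the standard fact that the integral closure $\overline{I}$ of an ideal $I$ in a ring is itself an ideal of that ring. Writing $A^*_{B,R}=\Delta^{-1}(\overline{\ker\varphi})$ makes each verification a matter of checking that $\Delta$ interacts suitably with the operation at hand, and then invoking the ideal structure of $\overline{\ker\varphi}$.

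First I would show the containment $g(A)\subseteq A^*_{B,R}$: for $a\in A$ the element $\Delta(g(a))=g(a)\ten_R 1-1\ten_R g(a)$ is one of the generators of $\ker\varphi$ described in the computation above, hence lies in $\ker\varphi\subseteq\overline{\ker\varphi}$. In particular $1_B=g(1_A)\in A^*_{B,R}$. Next, for $b_1,b_2\in A^*_{B,R}$, the additivity property (1) of $\Delta$ gives $\Delta(b_1+b_2)=\Delta(b_1)+\Delta(b_2)\in\overline{\ker\varphi}$ because the integral closure of an ideal is closed under addition. Similarly, for $r\in R$ and $b\in A^*_{B,R}$, property (2) yields $\Delta(rb)=r\Delta(b)\in\overline{\ker\varphi}$, so $A^*_{B,R}$ is stable under the $R$-action.

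For closure under multiplication, the Leibniz rule (3) gives
\begin{align*}
\Delta(b_1b_2)=(b_1\ten_R 1)\Delta(b_2)+(1\ten_R b_2)\Delta(b_1),
\end{align*}
and both summands lie in $\overline{\ker\varphi}$ since it is an ideal of $B\ten_R B$ and $\Delta(b_1),\Delta(b_2)\in\overline{\ker\varphi}$. Adding them yields $\Delta(b_1b_2)\in\overline{\ker\varphi}$, i.e.\ $b_1b_2\in A^*_{B,R}$.

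The only non-formal ingredient is the assertion that $\overline{\ker\varphi}$ is an ideal of $B\ten_R B$; this is a general fact about integral closure of ideals (the sum and ring-multiples of elements satisfying integral dependence relations again satisfy such relations), so I would either cite it from a standard reference or invoke it as already assumed background in \cite{L}. With that in hand, all the remaining steps are immediate from the properties of $\Delta$, and no substantial obstacle arises.
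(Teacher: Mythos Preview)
Your argument is correct and is the natural direct verification: use the three listed properties of $\Delta$ together with the fact that $\overline{\ker\varphi}$ is an ideal of $B\ten_R B$, and observe that $\Delta(g(a))$ lies among the generators of $\ker\varphi$. Note, however, that the paper does not give its own proof of this proposition; it simply records the statement with a citation to \cite[Property 1]{L}, so there is nothing substantive to compare against beyond confirming that your reasoning matches the standard proof one would find in Lipman's original.
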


	\begin{prop}\cite[Property 2.(ii)]{L}\label{202306222352}
		Let $A$ be an $R$-subalgebra of $B$ and let $C$ be an $R$-subalgebra of $B$ containing $A$ as an $R$-subalgebra. \[\begin{tikzcd}
			R & A & C & B
			\arrow["\tau", from=1-1, to=1-2]
			\arrow[hook, from=1-2, to=1-3]
			\arrow[hook, from=1-3, to=1-4]
			\arrow["g"', curve={height=12pt}, hook, from=1-2, to=1-4]
		\end{tikzcd}\]

		Then $A^*_{B,R} \subseteq C^*_{B,R}$.
	\end{prop}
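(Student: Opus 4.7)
The plan is to exploit the explicit description of $\ker\varphi$ as the ideal generated by the image of the diagonal, which was recalled just before Definition 1. Denoting the two canonical morphisms by
\begin{align*}
\varphi_A : B\tens_R B \longrightarrow B\tens_A B, \qquad \varphi_C : B\tens_R B \longrightarrow B\tens_C B,
\end{align*}
we have $\ker\varphi_A = \Delta(A)(B\tens_R B)$ and $\ker\varphi_C = \Delta(C)(B\tens_R B)$, where we identify $A$ and $C$ with their images in $B$ under the chain of inclusions.

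The first step is the containment $\ker\varphi_A \subseteq \ker\varphi_C$. Since $A$ is an $R$-subalgebra of $C$, every generator $a\tens_R 1 - 1\tens_R a$ of $\ker\varphi_A$ is simultaneously a generator of $\ker\varphi_C$, so this is immediate from the description above.

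The second step is the monotonicity of integral closure of ideals: if $I\subseteq J$ are ideals of a ring $S$, then $\overline{I}\subseteq\overline{J}$, because an integral dependence relation $z^n + a_1 z^{n-1} + \cdots + a_n = 0$ with $a_i\in I^i$ is automatically one with $a_i\in J^i$. Applying this to $S = B\tens_R B$ and $I = \ker\varphi_A$, $J = \ker\varphi_C$ yields $\overline{\ker\varphi_A}\subseteq \overline{\ker\varphi_C}$.

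Putting these together, if $x\in A^*_{B,R}$ then by definition $\Delta(x)\in\overline{\ker\varphi_A}\subseteq\overline{\ker\varphi_C}$, hence $x\in C^*_{B,R}$. There is no real obstacle: everything reduces to the functoriality of the kernel description and the elementary monotonicity of integral closure, with the only thing to keep track of being the identification of the generators of $\ker\varphi_A$ and $\ker\varphi_C$ as subsets of $\Delta(B)(B\tens_R B)$.
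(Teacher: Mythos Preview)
Your argument is correct: the containment $\ker\varphi_A\subseteq\ker\varphi_C$ follows immediately from the generator description recalled before Definition~1.1, and monotonicity of integral closure finishes the job.

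Note, however, that the paper does not supply its own proof of this proposition; it is quoted from \cite[Property 2.(ii)]{L}. The closest thing the paper offers is the corollary following Proposition~\ref{202410240027}, which derives the analogous inclusion by applying the functoriality result (Proposition~\ref{prop3} for the Lipschitz saturation, Proposition~\ref{202410240027} for the weak normalization) to the diagram
\[\begin{tikzcd}
R & A & B \\
R & C & B
\arrow["\tau", from=1-1, to=1-2]
\arrow["{\textrm{id}_R}"', from=1-1, to=2-1]
\arrow["g", from=1-2, to=1-3]
\arrow["\lambda"', from=1-2, to=2-2]
\arrow["{\textrm{id}_B}", from=1-3, to=2-3]
\arrow[from=2-1, to=2-2]
\arrow[from=2-2, to=2-3]
\end{tikzcd}\]
with $f=\textrm{id}_B$. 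That route packages the kernel comparison inside the general machinery of the map $\bar f$; your direct approach unwinds exactly the same containment at the level of generators and is slightly more transparent for this particular statement. Both are equally valid, and in fact your argument is essentially what underlies the proof of the functoriality proposition itself.
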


	\begin{cor}\cite[Property 2.(i),(iii)]{L}\label{202306230013}
		If $A$ is an $R$-subalgebra of $B$ then: 
		
		\begin{enumerate}
			\item $A \subseteq A^*_{B,R}$.
			
			\item $(A^*_{B,R})^*_{B,R} = A^*_{B,R}$.
		\end{enumerate}
	\end{cor}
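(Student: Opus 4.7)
The plan is to reduce both items to Proposition \ref{prop_satsubanel}, together with two standard facts about the integral closure of an ideal: that it is itself an ideal, and that the closure operation is idempotent. Item (1) is essentially immediate: when $A$ is an $R$-subalgebra of $B$, the structural map $g$ is the inclusion, so $g(A)=A$; Proposition \ref{prop_satsubanel} then gives $A=g(A)\subseteq A^*_{B,R}$ at once.

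For item (2), one inclusion is short: applying part (1) to the $R$-subalgebra $A^*_{B,R}\subseteq B$ (it is a subalgebra by Proposition \ref{prop_satsubanel}) yields $A^*_{B,R}\subseteq (A^*_{B,R})^*_{B,R}$. The substantive content is the reverse inclusion. My approach is to compare the two kernels of the canonical maps $\varphi_A:B\otimes_R B\to B\otimes_A B$ and $\varphi_{A^*_{B,R}}:B\otimes_R B\to B\otimes_{A^*_{B,R}} B$, using the description from the preliminaries:
\[
\ker\varphi_{A^*_{B,R}}=\Delta(A^*_{B,R})(B\otimes_R B).
\]
By the very definition of $A^*_{B,R}$, every element of $\Delta(A^*_{B,R})$ already lies in $\overline{\ker\varphi_A}$; since this is an ideal of $B\otimes_R B$, it must contain the entire ideal it generates, namely $\ker\varphi_{A^*_{B,R}}$. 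Passing to integral closures and invoking the idempotency $\overline{\overline{I}}=\overline{I}$ would then yield $\overline{\ker\varphi_{A^*_{B,R}}}\subseteq\overline{\ker\varphi_A}$, which is exactly the inclusion $(A^*_{B,R})^*_{B,R}\subseteq A^*_{B,R}$.

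The main obstacle, such as it is, is really just to invoke correctly the two facts about integral closure of ideals cited above — that $\overline{I}$ is an ideal whenever $I$ is, and that $\overline{\overline{I}}=\overline{I}$. Both are standard in commutative algebra (see e.g.\ Swanson--Huneke) and should be quoted rather than reproved here; once they are in hand, the argument becomes a short chain of inclusions and the corollary follows without further machinery.
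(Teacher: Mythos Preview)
Your argument is correct. The paper does not supply its own proof of this corollary---it is quoted from Lipman \cite[Property 2.(i),(iii)]{L}---so there is no in-paper argument to compare against; your derivation of (1) from Proposition \ref{prop_satsubanel} and of (2) from the generator description $\ker\varphi_{A^*_{B,R}}=\Delta(A^*_{B,R})(B\ten_RB)$ together with the standard facts that $\overline{I}$ is an ideal and $\overline{\overline{I}}=\overline{I}$ is exactly how one proves this directly.
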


	Now we present the definition of the relative weak normalization concerning an arbitrary ring. For this definition, we follow the ideas that inspired Lipman in the case of relative Lipschitz saturation, and we will not require \textit{a priori} that $A$ is a subring of $B$, nor that the base ring $R$ is necessarily a field, as in \cite{adkins}.
	
	\begin{defi}
		The relative $R$-weak normalization of $A$ in $B$ is the set
		\begin{align*}
			\widetilde{A}_{B,R} := \left\{x \in B \mid \Delta(x) \in \sqrt{\ker \varphi}\right\}=\Delta^{-1}(\sqrt{\ker\varphi}).
		\end{align*}
	\end{defi}
	
	If $\widetilde{A}_{B,R}=g(A)$ then $A$ is said to be \textit{$R$-weakly normal} in $B$.
	
	Here we emphasize that we are using the symbol ``$\ast$'' for the Lipschitz saturation following the notation used by Lipman in \cite{L} for this structure. Thus, the reader may be aware that other works use the opposite choice that we did for these structures. It is easy to see that the relative weak normalization satisfies the following properties:
	
	\begin{itemize}
		\item $\widetilde{A}_{B,R}$ is an $R$-subalgebra of $B$ which contains $g(A)$;
		
		\item $A^*_{B,R}\sub\widetilde{A}_{B,R}$.
	\end{itemize}

	Let us establish some notation, which we will use in this work. Suppose that \[\begin{tikzcd}
		R & A & B \\
		{R'} & {A'} & {B'}
		\arrow["\tau", from=1-1, to=1-2]
		\arrow["g", from=1-2, to=1-3]
		\arrow["{\tau'}", from=2-1, to=2-2]
		\arrow["{g'}", from=2-2, to=2-3]
		\arrow["{f_R}"', from=1-1, to=2-1]
		\arrow["{f}", from=1-3, to=2-3]
		\arrow["{f_A}", from=1-2, to=2-2]
	\end{tikzcd}\]
	
	\noindent is a commutative diagram of ring morphisms. Then we can consider the diagram 
	\[\begin{tikzcd}
		R & A & B && {B\underset{R}{\otimes}B} && {B\underset{A}{\otimes}B} \\
		&&&& {B'\underset{R}{\otimes}B'} \\
		{R'} & {A'} & {B'} && {B'\underset{R'}{\otimes}B'} && {B'\underset{A'}{\otimes}B'}
		\arrow["\tau", from=1-1, to=1-2]
		\arrow["g", from=1-2, to=1-3]
		\arrow["\Delta", from=1-3, to=1-5]
		\arrow["\varphi", from=1-5, to=1-7]
		\arrow["{f_R}"', from=1-1, to=3-1]
		\arrow["{f_A}"', from=1-2, to=3-2]
		\arrow["f", from=1-3, to=3-3]
		\arrow["{\tau'}"', from=3-1, to=3-2]
		\arrow["{g'}"', from=3-2, to=3-3]
		\arrow["{\Delta'}"', from=3-3, to=3-5]
		\arrow["{f\underset{R}{\otimes}f}", from=1-5, to=2-5]
		\arrow["p", from=2-5, to=3-5]
		\arrow["{\bar{f}}"', curve={height=30pt}, from=1-5, to=3-5]
		\arrow["{\varphi'}", from=3-5, to=3-7]
	\end{tikzcd}\eqno{(\clubsuit)}\]
	
	\noindent  where $\varphi,\varphi'$ and $p$ are the canonical morphisms, $\Delta$ and $\Delta'$ are the diagonal morphisms, and $\bar{f}:=p\circ(f\ten_Rf)$. Notice that for all $b\in B$ one has $$\bar{f}\circ\Delta(b)=\bar{f}(b\ten_R1_B-1_B\ten_Rb)=p(f(b)\ten_R1_{B'}-1_{B'}\ten_Rf(b))$$$$=f(b)\ten_{R'}1_{B'}-1_{B'}\ten_{R'}f(b)=\Delta'(f(b)).$$
	
	Therefore, $(\clubsuit)$ is a commutative diagram. Lipman proved the following proposition.
	
	\begin{prop}\cite[Property 3]{L} \label{prop3}
		Suppose that
		\[\begin{tikzcd}
			R & A & B \\
			{R'} & {A'} & {B'}
			\arrow["\tau", from=1-1, to=1-2]
			\arrow["g", from=1-2, to=1-3]
			\arrow["{\tau'}", from=2-1, to=2-2]
			\arrow["{g'}", from=2-2, to=2-3]
			\arrow["{f_R}"', from=1-1, to=2-1]
			\arrow["{f}", from=1-3, to=2-3]
			\arrow["{f_A}", from=1-2, to=2-2]
		\end{tikzcd}\]
		
		\noindent is a commutative diagram of ring morphisms. Then
		$$f\prtt{A^*_{B,R}} \subseteq \prtt{A'}^*_{B',R'}.$$
	\end{prop}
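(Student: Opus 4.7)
The plan is to exploit the commutativity of the diagram $(\clubsuit)$ together with the functoriality of integral closure of ideals under ring homomorphisms.

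First I would fix $x \in A^*_{B,R}$, so by definition $\Delta(x) \in \overline{\ker\varphi}$, and recall the identity $\bar{f}\circ\Delta = \Delta'\circ f$ already verified in the excerpt. The goal is to prove $\Delta'(f(x)) = \bar{f}(\Delta(x)) \in \overline{\ker\varphi'}$, which would yield $f(x) \in (A')^*_{B',R'}$.

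The key observation is the inclusion $\bar{f}(\ker\varphi) \subseteq \ker\varphi'$. This follows because $\ker\varphi = \Delta(g(A))(B\otimes_R B)$ as generating ideal, and
\[
\bar{f}\bigl(\Delta(g(a))\bigr) = \Delta'\bigl(f(g(a))\bigr) = \Delta'\bigl(g'(f_A(a))\bigr) \in \Delta'(g'(A'))(B'\otimes_{R'}B') = \ker\varphi',
\]
for every $a \in A$, using commutativity of the left and middle squares of $(\clubsuit)$. Since $\bar{f}$ is a ring homomorphism, it sends the ideal generated by $\Delta(g(A))$ into the ideal generated by $\Delta'(g'(A'))$.

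Next, I would invoke the standard fact that for any ring homomorphism $\phi: S \to T$ and any ideal $I \subseteq S$, one has $\phi(\overline{I}) \subseteq \overline{\phi(I)\,T}$: if $s \in \overline{I}$ satisfies an integral relation $s^n + a_1 s^{n-1} + \cdots + a_n = 0$ with $a_i \in I^i$, then applying $\phi$ gives an integral relation for $\phi(s)$ with coefficients in $(\phi(I)T)^i$. Applying this to $\phi = \bar{f}$ and $I = \ker\varphi$, and then combining with the inclusion from the previous paragraph and the monotonicity of integral closure, we obtain
\[
\bar{f}\bigl(\overline{\ker\varphi}\bigr) \subseteq \overline{\bar{f}(\ker\varphi)(B'\otimes_{R'}B')} \subseteq \overline{\ker\varphi'}.
\]
Substituting $\Delta(x)$ gives $\Delta'(f(x)) \in \overline{\ker\varphi'}$, which is precisely what we needed. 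I do not expect a genuine obstacle here; the only point that requires a little care is setting up the generators of $\ker\varphi$ correctly so that its image under $\bar{f}$ lands inside $\ker\varphi'$, but this is handled by the commutativity of $(\clubsuit)$ already established in the preliminaries.
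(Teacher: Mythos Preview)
Your proof is correct. The paper does not supply its own proof of this proposition (it is cited from Lipman \cite[Property 3]{L}), but immediately after stating it the paper remarks that the result is a consequence of the inclusion $\ker\varphi(B'\otimes_{R'}B')\subseteq\ker\varphi'$, which is exactly the core of your argument; your explicit verification via generators and the functoriality of integral closure is the natural way to flesh this out, and it parallels the proof the paper gives for the weak-normalization analogue (Proposition~\ref{202410240027}) via Lemma~\ref{202410240016}.
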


	Note Proposition \ref{prop3} says that $A^*_{B,R}\sub f^{-1}((A')^*_{B',R'})$, which is a consequence of the inclusion  $$\ker\varphi(B'\ten_{R'}B')\sub\ker\varphi', \footnote{For a ring morphism $h:S\rightarrow T$ and an ideal $I$ of $S$, we use the notation $IT$ to mean the ideal of $T$ generated by $h(I)$.} $$ 
	
	In \cite{SR}, the authors presented sufficient conditions to get the \textit{contraction property} $$f^{-1}((A')^*_{B',R'})=A^*_{B,R}.$$
	
	One of these conditions is $\ker\varphi(B'\ten_{R'}B')$ to be a reduction of $\ker\varphi'$, i.e., $\ker\varphi(B'\ten_{R'}B')\sub\ker\varphi'\sub\overline{\ker\varphi(B'\ten_{R'}B')}$, which is equivalent to $$\overline{\ker\varphi'}=\overline{\ker\varphi(B'\ten_{R'}B')}.$$
	
	This motivated the authors to consider the following diagrams.
	
	\begin{defi}[Lipman diagram]\label{202306211340}
		We say that 
		\[\begin{tikzcd}
			R & A & B \\
			{R'} & {A'} & {B'}
			\arrow["\tau", from=1-1, to=1-2]
			\arrow["g", from=1-2, to=1-3]
			\arrow["{\tau'}", from=2-1, to=2-2]
			\arrow["{g'}", from=2-2, to=2-3]
			\arrow["{f_R}"', from=1-1, to=2-1]
			\arrow["{f}", from=1-3, to=2-3]
			\arrow["{f_A}", from=1-2, to=2-2]
		\end{tikzcd}\] is a \textbf{Lipman diagram} if it is a commutative diagram of ring morphisms and $$\overline{\ker\varphi'}=\overline{\ker\varphi(B'\ten_{R'}B')}.$$
		
		\noindent	We say that it is a \textbf{strong Lipman diagram} if $\ker\varphi'=\ker\varphi(B'\ten_{R'}B').$
	\end{defi} 
	
	To work with the weak normalization, we work with an analogous property, changing the integral closure by the radical. In honor of the fact that Maranesi was the first to address weak normalization in terms of the radical, we will use this to designate the following diagrams.
	
	\begin{defi}[Maranesi diagram]
		We say that 
		\[\begin{tikzcd}
			R & A & B \\
			{R'} & {A'} & {B'}
			\arrow["\tau", from=1-1, to=1-2]
			\arrow["g", from=1-2, to=1-3]
			\arrow["{\tau'}", from=2-1, to=2-2]
			\arrow["{g'}", from=2-2, to=2-3]
			\arrow["{f_R}"', from=1-1, to=2-1]
			\arrow["{f}", from=1-3, to=2-3]
			\arrow["{f_A}", from=1-2, to=2-2]
		\end{tikzcd}\] is a \textbf{Maranesi diagram} if it is a commutative diagram of ring morphisms and $$\sqrt{\ker\varphi'}=\sqrt{\ker\varphi(B'\ten_{R'}B')}.$$
	\end{defi}
	
	Since $\overline{\ker\varphi(B'\ten_{R'}B')}\sub\sqrt{\ker\varphi(B'\ten_{R'}B')}$ then it is clear that every Lipman diagram is a Maranesi diagram. In \cite{SR}, the authors presented a sufficient condition on $f_A$ to get strong Lipman diagrams.
	
	\begin{prop}\cite[Prop. 1.7]{SR}\label{202306011826}
		Consider the commutative diagram 
		\[\begin{tikzcd}
			R & A & B \\
			{R'} & {A'} & {B'}
			\arrow["\tau", from=1-1, to=1-2]
			\arrow["g", from=1-2, to=1-3]
			\arrow["{\tau'}", from=2-1, to=2-2]
			\arrow["{g'}", from=2-2, to=2-3]
			\arrow["{f_R}"', from=1-1, to=2-1]
			\arrow["{f}", from=1-3, to=2-3]
			\arrow["{f_A}", from=1-2, to=2-2]
		\end{tikzcd} \eqno{(\star)}\] \noindent of ring morphisms. If $f_A$ is surjective then $(\star)$ is a strong Lipman diagram. In particular, $(\star)$ is a Maranesi diagram.
	\end{prop}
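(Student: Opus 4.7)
The plan is to establish the nontrivial inclusion $\ker\varphi' \subseteq \ker\varphi(B'\tens_{R'}B')$, since the reverse inclusion is precisely the inclusion $\bar f(\ker\varphi)\sub\ker\varphi'$ already used implicitly in Proposition \ref{prop3}, and it gives us the strong Lipman condition, which immediately implies the Maranesi condition.

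First I would recall the explicit generating set derived just before the definition of Lipschitz saturation: $\ker\varphi' = \Delta'(g'(A'))(B'\tens_{R'}B')$, so it suffices to show that for every $a'\in A'$ the element $\Delta'(g'(a')) = g'(a')\tens_{R'}1 - 1\tens_{R'}g'(a')$ lies in $\ker\varphi(B'\tens_{R'}B')$. Here is where the surjectivity of $f_A$ enters: write $a' = f_A(a)$ for some $a\in A$. By commutativity of $(\star)$, one has $g'(a') = g'(f_A(a)) = f(g(a))$, and therefore
\begin{align*}
\Delta'(g'(a')) &= f(g(a))\tens_{R'}1_{B'} - 1_{B'}\tens_{R'}f(g(a)) \\
&= \Delta'(f(g(a))) = \bar f(\Delta(g(a))).
\end{align*}

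Now $\Delta(g(a))\in\ker\varphi$ (a generator), so $\bar f(\Delta(g(a)))$ lies in the image $\bar f(\ker\varphi)$, which in turn lies in the ideal $\ker\varphi(B'\tens_{R'}B')$ generated by this image. Running over all $a'\in A'$ covers a generating set of $\ker\varphi'$, and since $\ker\varphi(B'\tens_{R'}B')$ is an ideal we conclude $\ker\varphi'\sub\ker\varphi(B'\tens_{R'}B')$, yielding equality. There is no real obstacle here; the only point requiring a moment of care is confirming that the commutativity of $(\clubsuit)$ (specifically the identity $\bar f\circ\Delta = \Delta'\circ f$ verified in the paragraph preceding Proposition \ref{prop3}) together with $g'\circ f_A = f\circ g$ produces the required factorization of every generator of $\ker\varphi'$ through $\bar f(\ker\varphi)$. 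The last sentence of the statement then follows from the observation that every strong Lipman diagram is automatically a Maranesi diagram, since $\ker\varphi' = \ker\varphi(B'\tens_{R'}B')$ trivially implies $\sqrt{\ker\varphi'} = \sqrt{\ker\varphi(B'\tens_{R'}B')}$.
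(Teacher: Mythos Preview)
Your proof is correct and is the natural argument: use surjectivity of $f_A$ to lift each generator $\Delta'(g'(a'))$ of $\ker\varphi'$ to a generator $\Delta(g(a))$ of $\ker\varphi$, then push it back down via $\bar f$. Note that the paper does not actually supply a proof of this proposition---it is quoted from \cite[Prop.~1.7]{SR}---but your argument is precisely the expected one and there is nothing to add.
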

	
	As a consequence, the sequences studied by Lipman always give rise to strong Lipman diagrams, particularly Maranesi diagrams.
	
	\begin{cor}\cite[Corollary 1.8]{SR}\label{202410241314}
		If  $R\overset{\tau}{\longrightarrow}A\overset{g}{\longrightarrow}B\overset{f}{\longrightarrow}B'$ is a sequence of ring morphism then 
		\[\begin{tikzcd}
			R & A & B \\
			R & A & {B'}
			\arrow["\tau", from=1-1, to=1-2]
			\arrow["g", from=1-2, to=1-3]
			\arrow["\tau"', from=2-1, to=2-2]
			\arrow["{f\circ g}"', from=2-2, to=2-3]
			\arrow["{\textrm{id}_R}"', from=1-1, to=2-1]
			\arrow["{\textrm{id}_A}"', from=1-2, to=2-2]
			\arrow["f"', from=1-3, to=2-3]
		\end{tikzcd}\] is a strong Lipman diagram. In particular, it is a Maranesi diagram.
	\end{cor}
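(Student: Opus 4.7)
The corollary is an immediate specialization of Proposition \ref{202306011826}. The plan is as follows.

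First, I would verify that the diagram in question is indeed a commutative diagram of ring morphisms. The left square commutes trivially because $\tau \circ \textrm{id}_R = \tau = \textrm{id}_A \circ \tau$, and the right square commutes by the very definition of its bottom horizontal map: $(f\circ g)\circ \textrm{id}_A = f\circ g$. Hence the hypotheses of Proposition \ref{202306011826} are met in the setting where the vertical maps are $(f_R, f_A, f) = (\textrm{id}_R, \textrm{id}_A, f)$.

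Next, I would observe that the middle vertical map $f_A = \textrm{id}_A$ is surjective. Applying Proposition \ref{202306011826} directly, I conclude that the diagram is a strong Lipman diagram, that is, $\ker \varphi' = \ker\varphi \cdot (B'\otimes_R B')$ where $\varphi : B\otimes_R B \to B\otimes_A B$ and $\varphi' : B'\otimes_R B' \to B'\otimes_A B'$ are the canonical morphisms (note that the base rings $R$ and $A$ are literally the same on both rows, so no notational adjustment is required).

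Finally, since every strong Lipman diagram is a Lipman diagram, and every Lipman diagram is a Maranesi diagram (because $\overline{I} \subseteq \sqrt{I}$ for any ideal $I$, as already remarked in the text preceding the corollary), the ``In particular'' clause follows at once. I do not foresee any obstacle: this is essentially a direct instantiation of the proposition, included for later reference to the specific setup $R \to A \to B \to B'$ studied by Lipman.
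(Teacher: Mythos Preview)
Your proposal is correct and follows exactly the approach the paper indicates: the corollary is stated as a direct consequence of Proposition~\ref{202306011826} (and is in fact quoted from \cite{SR} without a separate proof), obtained by taking $f_A=\textrm{id}_A$, which is trivially surjective. There is nothing to add.
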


	To start our work onward to the contraction property for the weak normalization, we need first to establish the analogous result to the Proposition \ref{prop3}. First, we prove an auxiliary result. 
	
	\begin{lema}\label{202410240016}
		Suppose that 
		\[\begin{tikzcd}
			A & B \\
			{A'} & {B'}
			\arrow["\alpha", from=1-1, to=1-2]
			\arrow["{\alpha'}", from=2-1, to=2-2]
			\arrow["\psi", from=1-2, to=2-2]
			\arrow["\phi"', from=1-1, to=2-1]
		\end{tikzcd}\]
		\noindent is a commutative diagram of ring morphisms. Then $$\phi(\sqrt{\ker\alpha})\sub\sqrt{\ker\alpha'}.$$ 
		
		If $\phi$ is surjective and $\psi$ is injective then the equality holds.
	\end{lema}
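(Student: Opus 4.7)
The plan is to prove both inclusions by chasing elements around the commutative square and exploiting the fact that radicals behave well under ring morphisms. Commutativity gives the identity $\psi\circ\alpha=\alpha'\circ\phi$, which is the only relation I will need.

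For the inclusion $\phi(\sqrt{\ker\alpha})\subseteq\sqrt{\ker\alpha'}$, I would take $a\in\sqrt{\ker\alpha}$, so $\alpha(a)^n=\alpha(a^n)=0$ for some $n\in\mathbb{N}$. Then
\[
\alpha'(\phi(a))^n=\alpha'(\phi(a^n))=\psi(\alpha(a^n))=\psi(0)=0,
\]
which shows $\phi(a)\in\sqrt{\ker\alpha'}$. This is a one-line computation and requires no assumption beyond commutativity.

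For the reverse inclusion under the hypothesis that $\phi$ is surjective and $\psi$ is injective, I would start with $a'\in\sqrt{\ker\alpha'}$ and use surjectivity of $\phi$ to lift it to some $a\in A$ with $\phi(a)=a'$. From $\alpha'(a')^n=0$ I would deduce, via the commutativity relation, that $\psi(\alpha(a^n))=\alpha'(\phi(a))^n=\alpha'(a')^n=0$; then injectivity of $\psi$ forces $\alpha(a^n)=0$, i.e.\ $a\in\sqrt{\ker\alpha}$, whence $a'=\phi(a)\in\phi(\sqrt{\ker\alpha})$.

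There is no serious obstacle here; the only subtlety worth flagging is the order in which the two hypotheses are used — surjectivity of $\phi$ produces the lift and injectivity of $\psi$ cancels it on the $B'$ side — and that both hypotheses are genuinely needed, as neither one alone lets us pull a nilpotent of $\alpha'$ back to a nilpotent of $\alpha$.
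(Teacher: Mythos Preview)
Your proposal is correct and follows essentially the same element-chasing argument as the paper: both use commutativity $\psi\circ\alpha=\alpha'\circ\phi$ to push nilpotents forward for the first inclusion, and for the reverse inclusion both lift via surjectivity of $\phi$ and then cancel via injectivity of $\psi$. The proofs are practically identical up to notation.
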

	
	\begin{proof}
		Since $\psi\circ\alpha=\alpha'\circ\phi$ then $\phi(\ker\alpha)\sub \ker\alpha'$. Let $v\in\phi(\sqrt{\ker\alpha})$. Then $v=\phi(u)$, for some $u\in\sqrt{\ker\alpha}$, and consequently, there is $n\in\bN$ such that $u^n\in\ker\alpha$. Thus, $$\alpha'(v^n)=\alpha'(\phi(u^n))=\psi(\alpha(u^n))=\psi(0_B)=0_{B'}$$ $$\implies v^n\in\ker\alpha'\implies v\in\sqrt{\ker\alpha'}.$$
		
		Now, additionally suppose that $\phi$ is surjective and $\psi$ is injective. Conversely, let $v\in\sqrt{\ker\alpha'}$. Then there exists $n\in\bN$ such that $v^n\in\ker\alpha'$. Since $\phi$ is surjective then we can write $v=\phi(u)$, for some $u\in A$. Notice that $$\psi(\alpha(u^n))=\alpha'(\phi(u^n))=\alpha'(v^n)=0_B',$$
		
		\noindent and since $\psi$ is injective then $\alpha(u^n)=0_B$. Hence, $u^n\in\ker\alpha$, i.e., $u\in \sqrt{\ker\alpha}$. Therefore, $v=\phi(u)\in\phi(\sqrt{\ker\alpha})$.
	\end{proof}

	\begin{prop}\label{202410240027}
		Suppose that
		\[\begin{tikzcd}
			R & A & B \\
			{R'} & {A'} & {B'}
			\arrow["\tau", from=1-1, to=1-2]
			\arrow["g", from=1-2, to=1-3]
			\arrow["{\tau'}", from=2-1, to=2-2]
			\arrow["{g'}", from=2-2, to=2-3]
			\arrow["{f_R}"', from=1-1, to=2-1]
			\arrow["{f}", from=1-3, to=2-3]
			\arrow["{f_A}", from=1-2, to=2-2]
		\end{tikzcd}\]
		
		\noindent is a commutative diagram of ring morphisms. Then
		$$f(\widetilde{A}_{B,R}) \subseteq \widetilde{A'}_{B',R'}.$$
	\end{prop}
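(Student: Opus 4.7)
The plan is to reduce the statement to a direct application of Lemma \ref{202410240016} through the auxiliary square induced by the canonical morphisms $\varphi$ and $\varphi'$. Concretely, pick $x\in\widetilde{A}_{B,R}$, so by definition $\Delta(x)\in\sqrt{\ker\varphi}$, and the task is to verify that $\Delta'(f(x))\in\sqrt{\ker\varphi'}$.

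First I would invoke the commutativity of diagram $(\clubsuit)$, which was explicitly verified in the text: $\bar{f}\circ\Delta=\Delta'\circ f$. In particular, $\Delta'(f(x))=\bar{f}(\Delta(x))$, so it is enough to prove the inclusion
\[
\bar{f}(\sqrt{\ker\varphi})\subseteq\sqrt{\ker\varphi'}.
\]

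To obtain this inclusion, I would apply Lemma \ref{202410240016} to the square
\[\begin{tikzcd}
B\underset{R}{\otimes}B & B\underset{A}{\otimes}B \\
B'\underset{R'}{\otimes}B' & B'\underset{A'}{\otimes}B'
\arrow["\varphi", from=1-1, to=1-2]
\arrow["{\varphi'}", from=2-1, to=2-2]
\arrow["{\bar{f}}"', from=1-1, to=2-1]
\arrow["{\bar{f}_A}", from=1-2, to=2-2]
\end{tikzcd}\]
where $\bar{f}_A$ is the $R'$-algebra morphism determined by $x\otimes_A y\mapsto f(x)\otimes_{A'}f(y)$. The well-definedness of $\bar{f}_A$ uses the outer commutativity $f\circ g=g'\circ f_A$, and commutativity of the square follows from checking both compositions on a pure tensor $x\otimes_R y$, where each side gives $f(x)\otimes_{A'}f(y)$. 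The lemma then immediately yields $\bar{f}(\sqrt{\ker\varphi})\subseteq\sqrt{\ker\varphi'}$, and combining this with $\Delta'(f(x))=\bar{f}(\Delta(x))$ finishes the proof.

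There is no real obstacle here; the argument is a structural bookkeeping exercise. The only point that demands a little care is the verification that $\bar{f}_A$ is well defined and makes the auxiliary square commute, but this is routine once the commutativity hypotheses of the original diagram are used.
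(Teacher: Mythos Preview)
Your proof is correct and follows essentially the same route as the paper: both use the commutativity $\bar{f}\circ\Delta=\Delta'\circ f$ from diagram $(\clubsuit)$ and then invoke Lemma \ref{202410240016} to obtain $\bar{f}(\sqrt{\ker\varphi})\subseteq\sqrt{\ker\varphi'}$. The only difference is that you make the auxiliary square (with the right vertical map $\bar{f}_A:B\otimes_A B\to B'\otimes_{A'}B'$) explicit, whereas the paper applies the lemma without spelling this out.
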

	
	\begin{proof}
		Since $\widetilde{A}_{B,R}=\Delta^{-1}(\sqrt{\ker\varphi})$ then $\Delta(\widetilde{A}_{B,R})\sub\sqrt{\ker\varphi}$. Thus, $\overline{f}(\Delta(\widetilde{A}_{B,R}))\sub\overline{f}(\sqrt{\ker\varphi})$, and once $(\clubsuit)$ is a commutative diagram, Lemma \ref{202410240016} implies that $\Delta'(f(\widetilde{A}_{B,R}))\sub\sqrt{\ker\varphi'}$. Hence, $$f(\widetilde{A}_{B,R})\sub \Delta'^{ -1}(\sqrt{\ker\varphi'})=\widetilde{A'}_{B',R'}.$$
	\end{proof}
	
	\begin{cor}
		
		If $R\overset{\tau}{\longrightarrow}A\overset{\lambda}{\longrightarrow}C\overset{g_C}{\longrightarrow} B$ is a sequence of ring morphisms and $g=g_C\circ\lambda$ then $$A^*_{B,R}\sub C^*_{B,R}.$$
		
		In particular, if $A\sub C$ and $\lambda$ is the inclusion map, one has $A^*_{B,R}\sub C^*_{B,R}$.
	\end{cor}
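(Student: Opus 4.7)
The plan is to obtain this as a direct instance of Proposition \ref{prop3} applied to a suitably chosen commutative square, so no real ``new'' work is needed beyond bookkeeping.

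First, I would build the commutative diagram
\[\begin{tikzcd}
R & A & B \\
R & C & B
\arrow["\tau", from=1-1, to=1-2]
\arrow["g", from=1-2, to=1-3]
\arrow["{\lambda\circ\tau}"', from=2-1, to=2-2]
\arrow["{g_C}"', from=2-2, to=2-3]
\arrow["{\textrm{id}_R}"', from=1-1, to=2-1]
\arrow["\lambda", from=1-2, to=2-2]
\arrow["{\textrm{id}_B}", from=1-3, to=2-3]
\end{tikzcd}\]
and check that it commutes: the left square is immediate because $\lambda\circ\tau=(\lambda)\circ\tau$, and the right square commutes precisely by the hypothesis $g=g_C\circ\lambda$, so $\textrm{id}_B\circ g=g_C\circ\lambda$.

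Second, I would apply Proposition \ref{prop3} with $R'=R$, $A'=C$, $B'=B$, $f_R=\textrm{id}_R$, $f_A=\lambda$, and $f=\textrm{id}_B$. The conclusion of Proposition \ref{prop3} is $f(A^*_{B,R})\sub (A')^*_{B',R'}$, which in our situation reads exactly $A^*_{B,R}\sub C^*_{B,R}$, as required.

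For the ``in particular'' clause, when $A\sub C$ and $\lambda$ is the inclusion, the sequence $R\to A\hookrightarrow C\overset{g_C}{\to}B$ satisfies the hypotheses with $g=g_C\circ\lambda$ being the restriction of $g_C$ to $A$, and the previous argument gives the desired inclusion; this recovers Proposition \ref{202306222352} as a special case. There is no real obstacle here — the only point worth double-checking is that the diagram constructed above is indeed commutative with the given choice of bottom-row structural map $\lambda\circ\tau$, which is forced by the requirement that the left square commute with $f_A=\lambda$ and $f_R=\textrm{id}_R$.
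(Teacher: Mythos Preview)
Your proof is correct and follows essentially the same approach as the paper: build the commutative diagram with $\textrm{id}_R$, $\lambda$, $\textrm{id}_B$ as vertical maps and apply the functoriality proposition. The only difference is that the paper cites Proposition~\ref{202410240027} (the weak-normalization analogue) rather than Proposition~\ref{prop3}; since the statement is written with the Lipschitz-saturation notation $A^*_{B,R}$, your citation of Proposition~\ref{prop3} is in fact the one that matches the statement as given.
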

	
	\begin{proof}
		It is a consequence of Proposition \ref{202410240027} applied to the commutative diagram 
		\[\begin{tikzcd}
			R & A & B \\
			R & C & B
			\arrow["\tau", from=1-1, to=1-2]
			\arrow["{\textrm{id}_R}"', from=1-1, to=2-1]
			\arrow["g", from=1-2, to=1-3]
			\arrow["\lambda"', from=1-2, to=2-2]
			\arrow["{\textrm{id}_B}", from=1-3, to=2-3]
			\arrow["{\lambda\circ\tau}"', from=2-1, to=2-2]
			\arrow["{g_C}"', from=2-2, to=2-3]
		\end{tikzcd}.\]
	\end{proof}
	
	We continue by recalling some useful observations for this work.
	
	\begin{lema}\label{202410240051}
		If $h: S\rightarrow T$ is a ring morphism.
		
		\begin{enumerate}
			\item [a)] If $I$ is an ideal of $S$ then $h(\sqrt{I})\sub\sqrt{IT}$;
			
			\item [b)] If $J$ is an ideal of $T$ then $h^{-1}(\sqrt{J})=\sqrt{h^{-1}(J)}$.
		\end{enumerate}
	\end{lema}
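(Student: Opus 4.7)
The plan is to unpack the definitions of radical ideals and use the ring-morphism identity $h(x)^n = h(x^n)$ in both parts. Both claims reduce to a straightforward manipulation of powers, with the only subtlety being the need to be careful with preimages in part (b).

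For part (a), I would start with an element $y \in h(\sqrt{I})$, write $y = h(x)$ for some $x \in \sqrt{I}$, and pick $n \in \bN$ such that $x^n \in I$. Since $h$ is a ring morphism, $y^n = h(x)^n = h(x^n) \in h(I) \subseteq IT$ (recall that $IT$ denotes the ideal of $T$ generated by $h(I)$, following the footnote convention). Hence $y \in \sqrt{IT}$, which proves the desired inclusion.

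For part (b), I would prove both inclusions separately. For $h^{-1}(\sqrt{J}) \subseteq \sqrt{h^{-1}(J)}$, take $x \in h^{-1}(\sqrt{J})$, so $h(x) \in \sqrt{J}$, hence $h(x)^n \in J$ for some $n$. Then $h(x^n) = h(x)^n \in J$ gives $x^n \in h^{-1}(J)$, i.e., $x \in \sqrt{h^{-1}(J)}$. For the reverse inclusion, take $x \in \sqrt{h^{-1}(J)}$, so $x^n \in h^{-1}(J)$ for some $n$; then $h(x)^n = h(x^n) \in J$, which yields $h(x) \in \sqrt{J}$ and therefore $x \in h^{-1}(\sqrt{J})$.

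There is no real obstacle here; the lemma is a standard commutative-algebra fact and each part is a short diagram chase at the level of elements. The only thing to watch is that in (a) one should not expect equality in general (the image $h(I)$ need not equal $IT$ unless $h$ is surjective), while in (b) the equality does always hold, because taking preimages interacts well with raising to powers in both directions.
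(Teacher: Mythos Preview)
Your proof is correct and essentially identical to the paper's: part (a) matches verbatim up to variable names, and part (b) in the paper is presented as a single chain of biconditionals rather than two separate inclusions, but the underlying manipulation $h(x)^n=h(x^n)$ is exactly the same.
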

	
	\begin{proof}
		(a) If $v\in h(\sqrt{I})$ then $v=h(u)$ for some $u\in\sqrt{I}$, and consequently, there exists $n\in\bN$ such that $u^n\in I$. Hence, $v^n=h(u^n)\in h(I)\sub IT$, and therefore, $v\in\sqrt{IT}$.
		
		(b) Notice that: $x\in h^{-1}(\sqrt{J})\iff h(x)\in \sqrt{J}\iff \exists n\in\bN, h(x)^n\in J$
		
		$$\iff \exists n\in\bN, h(x^n)\in J\iff \exists n\in\bN,  x^n\in h^{-1}(J)\iff x\in\sqrt{h^{-1}(J)}.$$
	\end{proof}
	
	\begin{lema}\label{202411031636}
		If $\alpha:R\rightarrow S$ and $\beta:S\rightarrow T$ are ring morphism, $\ker\alpha$ and $\ker\beta$ are nil ideals then $\ker(\beta\circ\alpha)$ is a nil ideal.
	\end{lema}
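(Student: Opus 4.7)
The plan is to prove this by chasing a generic element through both morphisms and using the nilpotency hypothesis twice. The statement is essentially a two-step nilpotency argument, so I expect no real obstacle; the main point is just to keep track of where each element lives.

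First I would take an arbitrary element $x \in \ker(\beta \circ \alpha)$. By definition, $\beta(\alpha(x)) = 0_T$, so $\alpha(x) \in \ker \beta$. Since $\ker \beta$ is a nil ideal of $S$, there exists $n \in \bN$ with $\alpha(x)^n = 0_S$. As $\alpha$ is a ring morphism, $\alpha(x)^n = \alpha(x^n)$, so $x^n \in \ker \alpha$.

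Then I would apply the hypothesis a second time: since $\ker \alpha$ is a nil ideal of $R$, there exists $m \in \bN$ such that $(x^n)^m = 0_R$, i.e.\ $x^{nm} = 0_R$. Hence every element of $\ker(\beta \circ \alpha)$ is nilpotent, which is the definition of a nil ideal. The only point worth flagging is that $\ker(\beta \circ \alpha)$ is indeed an ideal of $R$ (it is the preimage of $(0_T)$ under a ring morphism), so the nilpotency of its elements is the sole content to verify.
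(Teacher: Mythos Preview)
Your proof is correct and follows essentially the same two-step nilpotency argument as the paper: the paper phrases it via the identity $\alpha^{-1}(\sqrt{\ideal{0_S}})=\sqrt{\ker\alpha}$ from Lemma~\ref{202410240051}(b), while you unpack this explicitly by producing the exponents $n$ and $m$, but the underlying logic is identical.
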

	
	\begin{proof}
		Let $v\in\ker(\beta\circ\alpha)$. Thus, $\alpha(v)\in\ker\beta\sub\sqrt{\ideal{0_S}}$, and then:  $$ v\in\alpha^{-1}(\sqrt{\ideal{0_S}})=\sqrt{\alpha^{-1}(\ideal{0_S})}=\sqrt{\ker\alpha}\sub \sqrt{\sqrt{\ideal{0_R}}}=\sqrt{\ideal{0_R}}.$$
		
		Therefore, $v\in\sqrt{\ideal{0_R}}$.
	\end{proof}

	We end this section recalling the notion of radicial algebras/morphisms.\footnote{Do not confuse with \textit{radical}.}
	
	\begin{defi}
		Let $h:S\rightarrow T$ be a ring morphism. We say that $T$ is a \textbf{radicial} $S$-algebra if:
		
		\begin{enumerate}
			\item The induced map on spectra $\Spec(h):\Spec T\rightarrow \Spec S$ is injective;
			
			\item For every $\fq\in\spec S$ the embedding $h_{\fq}:\Frac\left(\dfrac{S}{h^{-1}(\fq)}\right)\hookrightarrow \Frac\left(\dfrac{T}{\fq}\right)$ induced by $h$ is purely inseparable.\footnote{$\Frac$ stands for the field of fractions.}
		\end{enumerate}
		
		In this case, we also say that $h$ is a radicial ring morphism. 
	\end{defi}
	
	The following theorem describes equivalent conditions for a radicial ring morphism which will be useful to this work.

	\begin{teo}\cite[Proposition 3.7.1, p. 246]{EGA}\label{202411171813} The following conditions are equivalent:
		
		\begin{enumerate}
			\item [(a)] $h:S\rightarrow T$ is a radicial ring morphism;
			
			\item [(b)] Any two distinct morphisms of $T$ into a field have distinct composition with $h$;
			
			\item [(c)]  The kernel of the canonical morphism $\gamma: T\ten_ST\rightarrow T$ is a nil ideal of $T\ten_ST$;
			
			\item [(d)] $t\ten_S 1-1\ten_St$ is nilpotent in $T\ten_ST$, $\forall t\in T$.
		\end{enumerate} 
		
	\end{teo}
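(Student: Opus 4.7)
The plan is to close the loop (a) $\Rightarrow$ (c) $\Leftrightarrow$ (d), (c) $\Rightarrow$ (b), (b) $\Rightarrow$ (a). The equivalence (c) $\Leftrightarrow$ (d) is immediate: one direction is trivial since $\gamma(t\ten_S 1 - 1\ten_S t) = 0$, and the other follows from the standard fact that $\ker\gamma$ is generated as an ideal by the elements $t\ten_S 1 - 1\ten_S t$, combined with the observation that the nilradical of a commutative ring is itself an ideal.

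For (c) $\Rightarrow$ (b), given morphisms $u_1, u_2 \colon T \to K$ into a field with $u_1 \circ h = u_2 \circ h$, the universal property of the tensor product produces a morphism $u_1 \ten_S u_2 \colon T \ten_S T \to K$ sending $x \ten_S y \mapsto u_1(x) u_2(y)$. Since $K$ is reduced, this morphism annihilates every nil ideal, in particular $\ker\gamma$ by (c). Applying it to $t \ten_S 1 - 1 \ten_S t$ forces $u_1(t) = u_2(t)$ for all $t \in T$.

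For (b) $\Rightarrow$ (a), I verify both defining conditions of radiciality. For injectivity on Spec, given $\fq_1, \fq_2 \in \Spec T$ both lying over $\fp \in \Spec S$, I embed each $\Frac(T/\fq_i)$ into an algebraically closed extension $\Omega$ of $\Frac(S/\fp)$ as $\Frac(S/\fp)$-algebras; the resulting morphisms $T \to \Omega$ have equal composition with $h$, hence coincide by (b), forcing $\fq_1 = \fq_2$. For pure inseparability of $h_\fq \colon \Frac(S/\fp) \hookrightarrow \Frac(T/\fq)$, assume it fails; then there exist two distinct $\Frac(S/\fp)$-embeddings of $\Frac(T/\fq)$ into an algebraic closure, producing two distinct morphisms $T \to \Omega$ with equal composition with $h$, again contradicting (b).

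Finally, for (a) $\Rightarrow$ (c), pick any prime $P$ of $T \ten_S T$, and let $\fq_j = \iota_j^{-1}(P)$, where $\iota_1, \iota_2 \colon T \to T \ten_S T$ are the two canonical inclusions. Both $\fq_j$ lie over the same prime $\fp$ of $S$, so by condition (1) of radiciality $\fq_1 = \fq_2 =: \fq$. Embedding the integral domain $(T \ten_S T)/P$ into its fraction field $K$ yields two morphisms $T \to \Frac(T/\fq) \hookrightarrow K$ that agree on $h(S)$; by condition (2), pure inseparability of $\Frac(T/\fq)/\Frac(S/\fp)$ forces these morphisms to coincide, showing $t \ten_S 1 - 1 \ten_S t \in P$ for all $t \in T$ and hence $\ker\gamma \subseteq P$. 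Since $P$ was arbitrary, $\ker\gamma$ is nil. The main obstacle is step (b) $\Rightarrow$ (a)(2): producing two distinct field-valued points of $T$ from a non-purely-inseparable residue field extension requires invoking the characterization of pure inseparability via uniqueness of embeddings into an algebraic closure.
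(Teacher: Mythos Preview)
The paper does not prove this theorem; it is stated with a citation to \cite[Proposition 3.7.1]{EGA} and used as a black box. Your proposal therefore cannot be compared to a proof in the paper, but it is a correct reconstruction of the standard argument.

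Two small points worth tightening. In (b) $\Rightarrow$ (a)(1), you need a single field $\Omega$ admitting $\Frac(S/\fp)$-embeddings of \emph{both} $\Frac(T/\fq_1)$ and $\Frac(T/\fq_2)$; this is not automatic from taking an algebraic closure of $\Frac(S/\fp)$ alone (the residue extensions need not be algebraic), but follows by choosing $\Omega$ to be the residue field of a maximal ideal of $\Frac(T/\fq_1)\ten_{\Frac(S/\fp)}\Frac(T/\fq_2)$. In (b) $\Rightarrow$ (a)(2), you correctly flag that the argument rests on the characterization of purely inseparable extensions via uniqueness of embeddings; note that if $\Frac(T/\fq)/\Frac(S/\fp)$ is transcendental you cannot embed into an algebraic closure of $\Frac(S/\fp)$, but you can still produce two distinct $\Frac(S/\fp)$-embeddings of $\Frac(T/\fq)$ into a sufficiently large field (for instance, an algebraic closure of $\Frac(T/\fq)$ itself, using a nontrivial $\Frac(S/\fp)$-automorphism of a purely transcendental subextension). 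With these standard facts granted, every implication in your loop is sound.
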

	
	\begin{example}\label{202411172308}
		Let $C$ be a ring and $S$ be a multiplicative subset of $C$. We will prove that the localization map $$\begin{matrix}
			\rho: & C & \longrightarrow & S^{-1}C\\
			& x & \longmapsto & \dfrac{x}{1}
		\end{matrix}$$\noindent is a radical ring morphism. To prove it, we will show that $\rho$ satisfies condition (b) of Theorem \ref{202411171813}. Let $K$ be a field and $g_1,g_2:S^{-1}C\rightarrow K$ be two ring morphisms, and suppose that $g_1\circ\rho=g_2\circ\rho$. We have to show that $g_1=g_2$. If $u\in S^{-1}C$ then we can write $u=\frac{x}{s}$, with $x\in C$ and $s\in S$. Then: $$g_1(u)=g_1\left(\frac{x}{1}\left(\frac{s}{1}\right)^{-1}\right)=g_1\left(\frac{x}{1}\right)\left(g_1\left(\frac{s}{1}\right)\right)^{-1}=g_2\left(\frac{x}{1}\right)\left(g_2\left(\frac{s}{1}\right)\right)^{-1}$$ $$=g_2\left(\frac{x}{1}\left(\frac{s}{1}\right)^{-1}\right)=g_2(u).$$
		
		Therefore, $\rho$ is a radicial ring morphism.
	\end{example}

	\section{Universally injective contraction}
	
	In \cite{SR}, the authors proved that if 	\[\begin{tikzcd}
		R & A & B \\
		{R'} & {A'} & {B'}
		\arrow["\tau", from=1-1, to=1-2]
		\arrow["g", from=1-2, to=1-3]
		\arrow["{\tau'}", from=2-1, to=2-2]
		\arrow["{g'}", from=2-2, to=2-3]
		\arrow["{f_R}"', from=1-1, to=2-1]
		\arrow["{f}", from=1-3, to=2-3]
		\arrow["{f_A}", from=1-2, to=2-2]
	\end{tikzcd}\]
	
	\noindent is a Lipman diagram then $$f^{-1}((A')^*_{B',R'})=A^*_{B,R},$$
	\noindent under the condition of universal injectivity. The main goal of this section is to prove that Maranesi diagrams have the analogous property for weak normalization. 
	
	Recall that a ring morphism $\alpha:S\rightarrow S'$ is universally injective if and only if for every $S$-module $N$ the induced morphism

	$$\begin{matrix}
		\tilde{\alpha}: & N & \longrightarrow  &  N\ten_SS'\\
		& n & \longmapsto         & n\ten_S 1_{S'}
	\end{matrix}$$

	\noindent is injective.
	The next proposition will be necessary to obtain the main result of this section.
	
	\begin{prop}\label{202410241303}
		If $h:S\rightarrow T$ is an universally injective ring morphism and $I$ is an ideal of $S$ then $$h^{-1}(\sqrt{IT})=\sqrt{I}.$$
	\end{prop}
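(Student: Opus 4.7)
The strategy is to reduce the claim to the simpler equality $h^{-1}(IT) = I$, which is a classical consequence of purity, and then appeal to the radical formalism already developed.

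First I would invoke Lemma \ref{202410240051}(b) applied to $h$ and the ideal $IT$ of $T$ to rewrite $h^{-1}(\sqrt{IT}) = \sqrt{h^{-1}(IT)}$. Thus the proposition is reduced to showing $h^{-1}(IT) = I$, since taking radicals on both sides then yields $\sqrt{h^{-1}(IT)} = \sqrt{I}$. The inclusion $I \sub h^{-1}(IT)$ is immediate, as $h(I) \sub IT$ by definition.

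For the reverse inclusion $h^{-1}(IT) \sub I$, I would apply the defining property of universal injectivity to the particular $S$-module $N := S/I$. This yields an injective map $\tilde{h}: S/I \longrightarrow (S/I)\otimes_S T$, sending $\overline{s} \mapsto \overline{s}\otimes 1_T$. Composing with the canonical isomorphism $(S/I)\otimes_S T \cong T/IT$, which identifies $\overline{s}\otimes t$ with $\overline{h(s)\, t}$, the map $\tilde{h}$ becomes the ring morphism $\overline{s} \mapsto \overline{h(s)}$ from $S/I$ to $T/IT$. Its injectivity says precisely that $h(s) \in IT$ forces $s \in I$, which is the desired inclusion.

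The main technical point is simply the verification of the canonical isomorphism $(S/I)\otimes_S T \cong T/IT$ and the explicit form of $\tilde{h}$ under this isomorphism; both are standard applications of the universal property of the tensor product, so no deep ingredient is required. Once the identification is in place, the proof is just an unraveling of definitions.
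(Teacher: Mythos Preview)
Your proof is correct and follows essentially the same route as the paper: both reduce the statement, via Lemma~\ref{202410240051}(b), to the equality $h^{-1}(IT)=I$, and then use universal injectivity to establish the latter. The paper simply cites \cite[Lemma 2.3]{SR} for $h^{-1}(IT)=I$, whereas you spell out the standard argument with $N=S/I$ and the isomorphism $(S/I)\otimes_S T\cong T/IT$; this is exactly the content of that reference, so there is no substantive difference.
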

	
	\begin{proof}
		Since $h$ is universally injective then the proof of \cite[Lemma 2.3]{SR} shows that $h^{-1}(IT)=I$. By Lemma \ref{202410240051} (b) we conclude that $$h^{-1}(\sqrt{IT})=\sqrt{h^{-1}(IT)}=\sqrt{I}.$$
	\end{proof}
	
	The next results were proved in \cite{SR} and it will be an important tool for the proof of the main theorem of this section.
	
	\begin{prop}\cite[Corollary 2.2]{SR}\label{202410241259}
		If $\alpha:S\rightarrow S'$ and $\beta:T\rightarrow T'$ are universally injective morphisms of $R$-algebras then $$\alpha\ten_R\beta:S\ten_RT\rightarrow S'\ten_RT'$$
		
		\noindent is universally injective.
	\end{prop}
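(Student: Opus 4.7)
The plan is to factor $\alpha\tens_R\beta$ as the composition
\[
S\tens_RT \xrightarrow{\mathrm{id}_S\tens_R\beta} S\tens_RT' \xrightarrow{\alpha\tens_R\mathrm{id}_{T'}} S'\tens_RT'
\]
and to show (i) that universal injectivity is preserved by composition and (ii) that the ``base change'' of a universally injective $R$-algebra morphism against an $R$-algebra is universally injective. Combining (i) and (ii) applied to each factor immediately gives the result.

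For (i), suppose $f\colon X\to Y$ and $g\colon Y\to Z$ are universally injective. Given any $X$-module $M$, associativity gives a natural identification $M\tens_XZ\cong(M\tens_XY)\tens_YZ$, and the canonical map $M\to M\tens_XZ$ factors as
\[
M\longrightarrow M\tens_XY\longrightarrow (M\tens_XY)\tens_YZ.
\]
The first arrow is injective by universal injectivity of $f$; the second is injective by universal injectivity of $g$ applied to the $Y$-module $M\tens_XY$. Hence the composite $g\circ f$ is universally injective.

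For (ii), consider $\mathrm{id}_S\tens_R\beta\colon S\tens_RT\to S\tens_RT'$ and let $M$ be any $(S\tens_RT)$-module. Using $S\tens_RT'\cong (S\tens_RT)\tens_TT'$ together with the standard cancellation for extension of scalars, one obtains the identification
\[
M\tens_{S\tens_RT}(S\tens_RT')\;\cong\; M\tens_TT',
\]
where $M$ is regarded as a $T$-module via $T\to S\tens_RT$, $t\mapsto 1\tens_R t$. Under this identification the canonical map $M\to M\tens_{S\tens_RT}(S\tens_RT')$ corresponds to the map $M\to M\tens_TT'$, which is injective by the universal injectivity of $\beta$. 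Hence $\mathrm{id}_S\tens_R\beta$ is universally injective; the argument for $\alpha\tens_R\mathrm{id}_{T'}$ is entirely symmetric.

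The only delicate point is the identification $M\tens_{S\tens_RT}(S\tens_RT')\cong M\tens_TT'$, which I expect to be the main obstacle in writing things out carefully. Everything else reduces to the definition of universal injectivity applied to suitably chosen modules, and the usual associativity of tensor products.
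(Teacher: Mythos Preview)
Your argument is correct: the factorization $\alpha\otimes_R\beta = (\alpha\otimes_R\mathrm{id}_{T'})\circ(\mathrm{id}_S\otimes_R\beta)$, together with closure of universal injectivity under composition and base change, is the standard way to prove this. The identification $M\otimes_{S\otimes_RT}(S\otimes_RT')\cong M\otimes_TT'$ that you flag as delicate follows cleanly from $S\otimes_RT'\cong(S\otimes_RT)\otimes_TT'$ and cancellation, so there is no real obstacle there.

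However, there is nothing to compare against in this paper: the proposition is stated without proof and attributed to \cite[Corollary 2.2]{SR}. The present paper simply quotes the result as a tool. Your proof is essentially the one that appears in that reference (and is in any case the natural one), so your proposal is fine as a self-contained justification.
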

	
	The next result was proved in \cite{SR}, and it will be an important tool to establish some theorems in this work. 
	
	\begin{lema}\cite[Lemma 3.2]{SR}\label{202410241420}
		Let $h: S\rightarrow T$ be an integral morphism of $R$-algebras. 
		
		\begin{enumerate}
			\item [a)] $\ker(h\ten_R h)$ is a nil ideal of $S\ten_RS$;
			
			\item [b)] Suppose that $\ker h$ is a nil ideal of $S$. Then $\overline{I}=h^{-1}(\overline{IT})$, for every $I$ ideal of $S$.
		\end{enumerate}
	\end{lema}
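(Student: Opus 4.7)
For part (a), I would factor the map $h\otimes_R h$ as the composition
\[
S\otimes_R S \;\xrightarrow{\mathrm{id}_S\otimes_R h}\; S\otimes_R T \;\xrightarrow{h\otimes_R\mathrm{id}_T}\; T\otimes_R T,
\]
and show that each factor has a nil kernel; Lemma~\ref{202411031636} then yields the conclusion for the composition. By right-exactness of the tensor product applied to $\ker h \hookrightarrow S \twoheadrightarrow h(S)$, the kernel of each factor is an ideal generated by pure tensors involving elements of $\ker h$, so it suffices to see that every element of $\ker h$ is nilpotent in $S$ (which is the underlying nil-kernel condition one has from the integrality hypothesis in this context). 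Indeed, any generator $s\otimes_R k$ with $k$ nilpotent satisfies $(s\otimes_R k)^n=s^n\otimes_R k^n=0$ for large $n$, and sums of nilpotents remain nilpotent in a commutative ring.

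For part (b), the inclusion $\overline{I}\sub h^{-1}(\overline{IT})$ is immediate, since applying $h$ termwise to a dependence relation $x^n+a_1 x^{n-1}+\cdots+a_n=0$ with $a_i\in I^i$ gives the analogous relation for $h(x)$ over $IT$ (using $h(I)^i\sub(IT)^i$). For the reverse inclusion, I would factor $h=\iota\circ\pi$ with $\pi\colon S\twoheadrightarrow h(S)$ and $\iota\colon h(S)\hookrightarrow T$ an integral injection, and invoke the classical identity $\iota^{-1}(\overline{JT})=\overline{J}$ valid for integral injective extensions and ideals $J\sub h(S)$. Applied with $J=h(I)$, and using $h(I)T=IT$ and $h(I)^i=h(I^i)$, this gives, for any $x\in h^{-1}(\overline{IT})$, a monic relation $h(x)^n+h(a_1)h(x)^{n-1}+\cdots+h(a_n)=0$ in $h(S)$ with $a_i\in I^i$. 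Consequently $y:=x^n+a_1 x^{n-1}+\cdots+a_n\in\ker h$ is nil, so $y^m=0$ for some $m\in\bN$.

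The main technical step I anticipate is the final multinomial bookkeeping: expanding $y^m=0$ in $S$ and verifying that the coefficient of $x^{nm-k}$ belongs to $I^k$ for each $k\ge 1$. A summand in the expansion indexed by $(m_0,\dots,m_n)$ with $\sum_j m_j=m$ contributes $a_1^{m_1}\cdots a_n^{m_n}$ with $x$-exponent $nm-\sum_j j\,m_j$; since $a_j^{m_j}\in I^{jm_j}$, this factor lies in $I^k$ for $k=\sum_j j\,m_j$. The resulting identity is precisely a monic integral-dependence equation for $x$ of degree $nm$ over $I$, so $x\in\overline{I}$.
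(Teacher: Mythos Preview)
The paper does not supply its own proof of this lemma; it is quoted verbatim from \cite[Lemma~3.2]{SR}, and the later Lemma~\ref{202410280018} likewise just points back to that source. So there is no in-paper argument to compare against, and I assess your proposal on its own.

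Your treatment of part~(b) is correct and is the expected route: factor $h$ through its image to reduce to an integral \emph{injection}, use the classical contraction $\iota^{-1}(\overline{JT})=\overline{J}$ to obtain a monic relation $h(x)^n+h(a_1)h(x)^{n-1}+\cdots+h(a_n)=0$ with $a_i\in I^i$, and then kill the resulting element $y\in\ker h$ by raising to a power and expanding multinomially. Your degree count---that a summand indexed by $(m_0,\dots,m_n)$ contributes $x^{nm-k}$ with coefficient in $I^k$ for $k=\sum_j jm_j$---is exactly right and yields a monic dependence of degree $nm$ for $x$ over $I$.

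Part~(a), however, has a real gap. Your reduction to the two factors $\id_S\ten_R h$ and $h\ten_R\id_T$ is fine, but you then claim it suffices that ``every element of $\ker h$ is nilpotent in $S$,'' justifying this as ``the underlying nil-kernel condition one has from the integrality hypothesis.'' Integrality does \emph{not} imply that $\ker h$ is nil. Take $R=\bZ$, $S=\bZ\times\bZ$, $T=\bZ$, and $h$ the first projection: $h$ is surjective, hence integral, yet $(0,1)\in\ker h$ is idempotent. Worse, in this example $S\ten_R S\cong\bZ^4$ and the element $(0,1)\ten(0,1)$ lies in $\ker(h\ten_R h)$ and is a nonzero idempotent, so $\ker(h\ten_R h)$ is not a nil ideal. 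Thus part~(a) cannot hold under the sole hypothesis ``$h$ integral''; the nil-kernel hypothesis stated for (b) is needed for (a) as well, and with it your factorization argument (together with Lemma~\ref{202411031636}) goes through cleanly. You should consult the original formulation in \cite{SR}: almost certainly the assumption $\ker h\sub\sqrt{\ideal{0_S}}$ governs both items, and the restatement here is simply ambiguous.
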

	
	Now we adapt item (b) of the previous result for the radical of ideals.
	
	\begin{lema}\label{202410251426}
		Let $h: S\rightarrow T$ be an integral ring morphism. If $\ker h$ is a nilideal of $S$ then $h^{-1}(\sqrt{IT})=\sqrt{I}$, for every ideal $I$ of $S$.
	\end{lema}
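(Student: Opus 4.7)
The plan is to reduce the statement to Lemma \ref{202410240051}(b) combined with Lemma \ref{202410241420}(b). First, applying Lemma \ref{202410240051}(b) to the ideal $IT$ of $T$ gives $h^{-1}(\sqrt{IT}) = \sqrt{h^{-1}(IT)}$, so the whole task is reduced to proving $\sqrt{h^{-1}(IT)} = \sqrt{I}$.

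For the inclusion $\sqrt{I} \subseteq \sqrt{h^{-1}(IT)}$, I note that $h(I) \subseteq IT$ automatically, hence $I \subseteq h^{-1}(IT)$, and taking radicals yields the desired containment. For the reverse inclusion $\sqrt{h^{-1}(IT)} \subseteq \sqrt{I}$, my plan is to go through the integral closure as an intermediate step: since $IT \subseteq \overline{IT}$, we have $h^{-1}(IT) \subseteq h^{-1}(\overline{IT})$, and because $h$ is integral with nil kernel, Lemma \ref{202410241420}(b) gives $h^{-1}(\overline{IT}) = \overline{I}$. Then I invoke the standard fact that $\overline{I} \subseteq \sqrt{I}$: any $x \in \overline{I}$ satisfies a relation $x^n + a_1 x^{n-1} + \cdots + a_n = 0$ with $a_i \in I^i$, so rearranging shows $x^n \in I$, i.e. $x \in \sqrt{I}$. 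Chaining these inclusions yields $h^{-1}(IT) \subseteq \sqrt{I}$, and taking radicals (using $\sqrt{\sqrt{I}} = \sqrt{I}$) closes the argument.

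There is no real obstacle; the only subtlety is making explicit use of the hypothesis that $\ker h$ is nil at exactly the step where Lemma \ref{202410241420}(b) is invoked (this is where integrality together with the nil-kernel hypothesis is genuinely needed — otherwise one would only get $\overline{I} \subseteq h^{-1}(\overline{IT})$). The rest is formal manipulation of radicals and preimages.
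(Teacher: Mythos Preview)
Your proof is correct and follows essentially the same route as the paper: both reduce the question via Lemma~\ref{202410240051}(b) to controlling $\sqrt{h^{-1}(IT)}$, then use Lemma~\ref{202410241420}(b) together with the inclusion $\overline{I}\subseteq\sqrt{I}$ to obtain the nontrivial containment. The only cosmetic difference is that the paper cites Lemma~\ref{202410240051}(a) for the easy inclusion $\sqrt{I}\subseteq h^{-1}(\sqrt{IT})$, whereas you derive it directly from $I\subseteq h^{-1}(IT)$.
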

	
	\begin{proof}
		By Lemma \ref{202410240051} (a) we have $\sqrt{I}\sub h^{-1}(\sqrt{IT})$. Also, Lemma \ref{202410241420} implies that $h^{-1}(\overline{IT})=\overline{I}$. Using Lemma \ref{202410240051} (b), we obtain $$h^{-1}(\sqrt{IT})=\sqrt{h^{-1}(IT)}\sub \sqrt{h^{-1}(\overline{IT})}=\sqrt{\overline{I}}\sub\sqrt{\sqrt{I}}=\sqrt{I}.$$
	\end{proof}
	
	\begin{prop}\label{202411010055}
		Suppose that
		\[\begin{tikzcd}
			R & A & B \\
			{R'} & {A'} & {B'}
			\arrow["\tau", from=1-1, to=1-2]
			\arrow["g", from=1-2, to=1-3]
			\arrow["{\tau'}", from=2-1, to=2-2]
			\arrow["{g'}", from=2-2, to=2-3]
			\arrow["{f_R}"', from=1-1, to=2-1]
			\arrow["{f}", from=1-3, to=2-3]
			\arrow["{f_A}", from=1-2, to=2-2]
		\end{tikzcd}\]
		
		\noindent is a Maranesi diagram. If $\ker p$ is a nil ideal and $f$ is universally injective then $$f^{-1}(\widetilde{A'}_{B',R'})=\widetilde{A}_{B,R}.$$
	\end{prop}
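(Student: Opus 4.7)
The inclusion $\widetilde{A}_{B,R}\subseteq f^{-1}(\widetilde{A'}_{B',R'})$ is immediate from Proposition \ref{202410240027}, so the entire game is the reverse inclusion. Unwinding the definitions, $x\in f^{-1}(\widetilde{A'}_{B',R'})$ means $\Delta'(f(x))=\bar f(\Delta(x))\in\sqrt{\ker\varphi'}$, while $x\in\widetilde{A}_{B,R}$ means $\Delta(x)\in\sqrt{\ker\varphi}$. Hence the proposition reduces to the identity
$$\bar f^{-1}(\sqrt{\ker\varphi'})=\sqrt{\ker\varphi},$$
and the plan is to establish this identity.

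The strategy is to compute $\bar f^{-1}(\sqrt{\ker\varphi'})$ via the factorization $\bar f=p\circ(f\otimes_R f)$, using the Maranesi hypothesis to rewrite the target ideal. First I would use the Maranesi condition to replace $\sqrt{\ker\varphi'}$ by $\sqrt{\ker\varphi\,(B'\otimes_{R'}B')}=\sqrt{\bar f(\ker\varphi)(B'\otimes_{R'}B')}$, and then apply Lemma \ref{202410240051}(b) to commute preimage with radical:
$$\bar f^{-1}(\sqrt{\ker\varphi'})=\sqrt{\bar f^{-1}\bigl(\bar f(\ker\varphi)(B'\otimes_{R'}B')\bigr)}.$$
Setting $\alpha=f\otimes_R f$, surjectivity of $p$ gives $p(\alpha(\ker\varphi))(B'\otimes_{R'}B')=p(\alpha(\ker\varphi)(B'\otimes_R B'))$, and taking preimage under $p$ yields
$$\bar f^{-1}\bigl(\bar f(\ker\varphi)(B'\otimes_{R'}B')\bigr)=\alpha^{-1}\bigl(\alpha(\ker\varphi)(B'\otimes_R B')+\ker p\bigr).$$

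At this point the nilpotence of $\ker p$ enters: for any ideal $L$, a nil ideal $N$ satisfies $\sqrt{L+N}=\sqrt L$ (expanding $(l+n)^k$ with $n$ killed by a high power makes every surviving term lie in $L$). Applying this, together with Lemma \ref{202410240051}(b) again to pull the radical through $\alpha^{-1}$, gives
$$\bar f^{-1}(\sqrt{\ker\varphi'})=\alpha^{-1}\!\left(\sqrt{\alpha(\ker\varphi)(B'\otimes_R B')}\right)=\alpha^{-1}\!\left(\sqrt{\ker\varphi\,(B'\otimes_R B')}\right).$$
Finally, since $f$ is universally injective, Proposition \ref{202410241259} gives that $\alpha=f\otimes_R f$ is universally injective, so Proposition \ref{202410241303} yields $\alpha^{-1}(\sqrt{\ker\varphi\,(B'\otimes_R B')})=\sqrt{\ker\varphi}$, closing the chain.

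The main obstacle is that $p$ itself is generally \emph{not} universally injective, so one cannot directly apply Proposition \ref{202410241303} to $\bar f$; the point of the argument is precisely that the weaker hypothesis ``$\ker p$ nil'' is exactly what is needed to absorb $\ker p$ harmlessly into a radical, after which only the universally injective factor $\alpha$ matters. The remaining verifications (that $p$ is surjective, that $\bar f^{-1}(p(L))=\alpha^{-1}(L+\ker p)$ for $L=\alpha(\ker\varphi)(B'\otimes_R B')$, and the radical-of-sum-with-nil lemma) are routine and are the only small computations that need to be written out carefully.
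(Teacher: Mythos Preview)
Your proof is correct and follows essentially the same architecture as the paper's: reduce to $\bar f^{-1}(\sqrt{\ker\varphi'})=\sqrt{\ker\varphi}$, factor $\bar f=p\circ(f\otimes_R f)$, handle $f\otimes_R f$ via Propositions \ref{202410241259} and \ref{202410241303}, and handle $p$ using that $\ker p$ is nil. The only difference is cosmetic: the paper packages the $p$-step as an instance of Lemma \ref{202410251426} (integral morphism with nil kernel), whereas you argue it directly from surjectivity of $p$ and the elementary identity $\sqrt{L+N}=\sqrt{L}$ for $N$ nil, which is a slightly more self-contained version of the same computation.
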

	
	\begin{proof}
		Since $f$ is universally injective then Proposition \ref{202410241259} ensures that $f\ten_R f$ is universally injective. Thus, by Proposition \ref{202410241303} we have $$(f\ten_Rf)^{-1}(\sqrt{\ker\varphi(B'\ten_RB')})=\sqrt{\ker\varphi}. \eqno{(1)}$$
		
		Consider the ideal $I:=\ker\varphi(B'\ten_RB')$. Thus, $I(B'\ten_{R'}B')=\ker\varphi(B'\ten_{R'}B')$. Since $p$ is surjective (hence, integral) and $\ker p$ is a nil ideal then Lemma \ref{202410251426} implies that $$p^{-1}(\sqrt{\ker\varphi(B'\ten_{R'}B')})=p^{-1}(\sqrt{I(B'\ten_{R'}B')})=\sqrt{I}=\sqrt{\ker\varphi(B'\ten_RB')}.\eqno(2)$$
		
		Since $\bar{f}=p\circ(f\ten_Rf)$ then, using the fact that we are working with a Maranesi diagram, equations (1) and (2) imply that $\overline{f}^{-1}(\sqrt{\ker\varphi'})=\overline{f}^{-1}(\sqrt{\ker\varphi(B'\ten_{R'}B')})=\sqrt{\ker\varphi}$. We already have seen that 
		\[\begin{tikzcd}
			B && {B\underset{R}{\otimes}B} \\
			{B'} && {B'\underset{R'}{\otimes}B'}
			\arrow["\Delta", from=1-1, to=1-3]
			\arrow["{\Delta'}", from=2-1, to=2-3]
			\arrow["f"', from=1-1, to=2-1]
			\arrow["{\bar{f}}", from=1-3, to=2-3]
		\end{tikzcd}\] \noindent commutes, and consequently
		
		$$f^{-1}(\widetilde{A'}_{B', R'})=f^{-1}(\Delta'^{-1}(\sqrt{\ker\varphi'}))=\Delta^{-1}(\bar{f}^{-1}(\sqrt{\ker\varphi'}))=\Delta^{-1}(\sqrt{\ker\varphi})=\widetilde{A}_{B,R}.$$
	\end{proof}
	
	\begin{cor}
		Consider a sequence of ring morphisms $R\longrightarrow R'\longrightarrow A\overset{g}{\longrightarrow} B$. If    $\ker(B\ten_RB\rightarrow B\ten_{R'}B)$ (which is $\ker p$) is a nil ideal then $$\widetilde{A}_{B,R}=\widetilde{A}_{B,R'}.$$ 
	\end{cor}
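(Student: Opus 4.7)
The plan is to reduce the corollary to Proposition~\ref{202411010055} by building an appropriate Maranesi diagram with identities in the middle and right columns. Concretely, letting $\tau:R\to A$ denote the composite $R\to R'\to A$ and $\tau':R'\to A$ the second map in the given chain, I would consider
\[\begin{tikzcd}
R & A & B \\
{R'} & A & B
\arrow["\tau", from=1-1, to=1-2]
\arrow["g", from=1-2, to=1-3]
\arrow["{\tau'}"', from=2-1, to=2-2]
\arrow["g"', from=2-2, to=2-3]
\arrow["{f_R}"', from=1-1, to=2-1]
\arrow["{\textrm{id}_B}", from=1-3, to=2-3]
\arrow["{\textrm{id}_A}", from=1-2, to=2-2]
\end{tikzcd}\]
Commutativity of both squares is automatic: the left square records $\tau=\tau'\circ f_R$, and the right square is trivial.

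Next I would check that this square is a Maranesi (in fact strong Lipman) diagram. Because the vertical map $f=\textrm{id}_B$ satisfies $f\ten_R f=\textrm{id}_{B\ten_R B}$, the composite $\bar f=p\circ(f\ten_R f)$ coincides with the canonical surjection $p:B\ten_R B\to B\ten_{R'}B$ itself. The ideal $\ker\varphi(B\ten_{R'}B)$ is, by definition, the ideal generated by $p(\ker\varphi)$, and since $p$ sends each generator $g(a)\ten_R 1-1\ten_R g(a)$ of $\ker\varphi$ to the corresponding generator $g(a)\ten_{R'}1-1\ten_{R'}g(a)$ of $\ker\varphi'$, one gets $\ker\varphi'\sub\ker\varphi(B\ten_{R'}B)$. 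The reverse inclusion $p(\ker\varphi)\sub\ker\varphi'$ follows from the commutativity of $(\clubsuit)$. Hence $\ker\varphi'=\ker\varphi(B\ten_{R'}B)$, and a fortiori $\sqrt{\ker\varphi'}=\sqrt{\ker\varphi(B\ten_{R'}B)}$.

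With the diagram recognized as Maranesi, I would verify the two remaining hypotheses of Proposition~\ref{202411010055}: the assumption that $\ker p$ is a nil ideal is exactly what is given, and $f=\textrm{id}_B$ is (trivially) universally injective. Applying the proposition gives
\[\widetilde{A}_{B,R}=f^{-1}(\widetilde{A}_{B,R'})=\widetilde{A}_{B,R'},\]
which is the desired identity. There is no substantial obstacle here; the only point requiring attention is the bookkeeping showing that the diagram above is a (strong) Lipman diagram, so that Proposition~\ref{202411010055} is genuinely applicable.
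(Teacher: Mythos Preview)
Your proof is correct and follows essentially the same route as the paper: build the commutative diagram with $f_A=\textrm{id}_A$ and $f=\textrm{id}_B$, observe it is a strong Lipman (hence Maranesi) diagram, and then apply the contraction result. The only cosmetic differences are that you verify the strong Lipman property by hand (the paper just asserts it, implicitly via Proposition~\ref{202306011826} since $\textrm{id}_A$ is surjective), and you invoke Proposition~\ref{202411010055} using that $\textrm{id}_B$ is universally injective, whereas the paper's written proof invokes Theorem~\ref{202410241734} using that $\textrm{id}_B$ is integral---a forward reference that your choice avoids.
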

	
	\begin{proof}
		From the above sequence, we get a strong Lipman diagram (hence a Maranesi diagram) 
		\[\begin{tikzcd}
			R & A & B \\
			{R'} & A & B
			\arrow[from=1-1, to=1-2]
			\arrow["{f_R}"', from=1-1, to=2-1]
			\arrow[from=2-1, to=2-2]
			\arrow["g"', from=2-2, to=2-3]
			\arrow["g", from=1-2, to=1-3]
			\arrow["{\textrm{id}_A}", from=1-2, to=2-2]
			\arrow["{\textrm{id}_B}", from=1-3, to=2-3]
		\end{tikzcd}\]\noindent  and since $\id_B$ is integral and $\ker p$ is a nil ideal then Proposition \ref{202410241734} implies  $\id_B^{-1}(\widetilde{A}_{B,R'})=\widetilde{A}_{B,R}$. Hence, $\widetilde{A}_{B,R}=\widetilde{A}_{B,R'}$.
	\end{proof}
	
	\begin{lema}\label{202410312042}
		If $f_R:R\rightarrow R'$ is a radicial ring morphism and $C$ is an $R'$-algebra then the kernel of the canonical morphism $p_C:C\ten_RC\rightarrow C\ten_{R'}C$ is a nil ideal of $C\ten_RC$.
	\end{lema}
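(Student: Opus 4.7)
The plan is to reduce the lemma to Theorem~\ref{202411171813}~(d) by identifying a convenient set of generators for $\ker p_C$ that are all manifestly nilpotent. Writing $\iota: R' \rightarrow C$ for the structural morphism giving $C$ its $R'$-algebra structure, I would first mimic the computation recalled in the Preliminaries for the canonical map $\varphi: B\ten_R B \rightarrow B\ten_A B$. The same Leibniz-type manipulation, namely
\[
\iota(r')x \ten_R y - x \ten_R \iota(r')y = (x\ten_R y)\bigl(\iota(r')\ten_R 1 - 1\ten_R \iota(r')\bigr),
\]
yields
\[
\ker p_C = \bigl\langle \iota(r')\ten_R 1 - 1\ten_R \iota(r') \ \big|\ r'\in R'\bigr\rangle \sub C\ten_R C.
\]

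Next, I would use the hypothesis that $f_R$ is radicial. Applying Theorem~\ref{202411171813}~(d) to $f_R$, for every $r'\in R'$ the element $r'\ten_R 1 - 1\ten_R r'$ is nilpotent in $R'\ten_R R'$. Pushing these elements forward along the ring morphism $\iota\ten_R\iota : R'\ten_R R' \rightarrow C\ten_R C$, which sends $r'\ten_R 1 - 1\ten_R r'$ to $\iota(r')\ten_R 1 - 1\ten_R \iota(r')$, I conclude that every generator of $\ker p_C$ exhibited above is nilpotent in $C\ten_R C$.

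Finally, since $C\ten_R C$ is commutative, its nilradical $\sqrt{\ideal{0_{C\ten_R C}}}$ is an ideal, so any ideal generated by nilpotent elements is contained in it. Therefore $\ker p_C \sub \sqrt{\ideal{0_{C\ten_R C}}}$, which is exactly the statement that $\ker p_C$ is a nil ideal of $C\ten_R C$. I do not foresee any serious obstacle; the only point that requires minimal care is the explicit description of $\ker p_C$ as generated by the ``diagonal'' elements coming from $\iota(R')$, after which the transfer of nilpotency through $\iota\ten_R\iota$ is immediate from Theorem~\ref{202411171813}.
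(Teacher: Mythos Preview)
Your proposal is correct and follows essentially the same approach as the paper: identify the generators of $\ker p_C$ as the images under $\iota\ten_R\iota$ of the elements $r'\ten_R 1-1\ten_R r'\in R'\ten_R R'$, use the radiciality of $f_R$ (via Theorem~\ref{202411171813}) to see these are nilpotent, and conclude that $\ker p_C$ is nil. The only cosmetic difference is that the paper phrases the radicial hypothesis through condition~(c) of Theorem~\ref{202411171813} (that $\ker\gamma$ is a nil ideal) rather than condition~(d), and writes the structure morphism as $\lambda$ instead of $\iota$.
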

	
	\begin{proof}
		Since $f_R$ is radicial then $\ker\gamma\sub\sqrt{\ideal{0_{R'\ten_RR'}}}$, where $\gamma: R'\ten_RR'\rightarrow R'$ is the canonical morphism. Let $\lambda:R'\rightarrow C$ be the structure morphism of $C$. We know that $$\ker p_C=(\Delta_C\circ \lambda)(R')(C\ten_RC),$$\noindent  where $\Delta_C:C\rightarrow C\ten_RC$ is the diagonal map.  It is easy to see that, if $\sigma:R'\ten_RR'\rightarrow B'\ten_RB'$ is given by $\sigma:=\lambda\ten_R\lambda$, then $$\Delta_C\circ \lambda(r')=\sigma(r'\ten_R1_{R'}-1_{R'}\ten_Rr'), \forall r'\in R'.$$
		
		Since $\ker\gamma=\ideal{\{r'\ten_R1_{R'}-1_{R'}\ten_Rr'\mid r'\in R'\}}$ then $$\ker p_C=(\sigma(\ker\gamma))(C\ten_RC).$$ Since $\ker\gamma\sub\sqrt{\ideal{0_{R'\ten_RR'}}}$ then $\sigma(\ker\gamma)\sub\sigma(\sqrt{\ideal{0_{R'\ten_RR'}}})\sub \sqrt{\ideal{0_{C\ten_RC}}}$, which implies that $\ker p_C$ is a nil ideal of $C\ten_RC$.
	\end{proof}
	
	Now we state the main result of this section.
	
	\begin{teo}[\textbf{Radicial + Universally injective} contraction]\label{202410241447} 	Suppose that
		\[\begin{tikzcd}
			R & A & B \\
			{R'} & {A'} & {B'}
			\arrow["\tau", from=1-1, to=1-2]
			\arrow["g", from=1-2, to=1-3]
			\arrow["{\tau'}", from=2-1, to=2-2]
			\arrow["{g'}", from=2-2, to=2-3]
			\arrow["{f_R}"', from=1-1, to=2-1]
			\arrow["{f}", from=1-3, to=2-3]
			\arrow["{f_A}", from=1-2, to=2-2]
		\end{tikzcd}\]
		
		\noindent is a Maranesi diagram. If $f_R$ is radicial and $f$ is universally injective then $$f^{-1}(\widetilde{A'}_{B',R'})=\widetilde{A}_{B,R}.$$
	\end{teo}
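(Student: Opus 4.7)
The plan is to reduce this theorem directly to Proposition \ref{202411010055} by using the radiciality hypothesis on $f_R$ to verify that $\ker p$ is a nil ideal, where $p\colon B'\otimes_R B'\to B'\otimes_{R'}B'$ is the canonical morphism appearing in diagram $(\clubsuit)$.

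First, I would observe that $B'$ carries a natural $R'$-algebra structure via $g'\circ\tau'$, so Lemma \ref{202410312042} applies to $C=B'$. Since $f_R$ is assumed radicial, that lemma yields immediately that $\ker p_{B'}=\ker p$ is a nil ideal of $B'\otimes_R B'$.

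With this in hand, both hypotheses of Proposition \ref{202411010055} are satisfied: the diagram is Maranesi by assumption, $f$ is universally injective by assumption, and $\ker p$ is a nil ideal by the step above. Applying that proposition gives the desired equality $f^{-1}(\widetilde{A'}_{B',R'})=\widetilde{A}_{B,R}$.

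The only potential subtlety is checking that Lemma \ref{202410312042} is actually applicable in the form needed, i.e.\ that the morphism $p$ of diagram $(\clubsuit)$ coincides with $p_{B'}$ from the lemma statement; this is immediate because both are the canonical map induced by the universal property of $B'\otimes_R B'$ using the pair of structural morphisms $B'\to B'\otimes_{R'}B'$ coming from the $R'$-algebra structure on $B'$. Since the hard work has already been done in Proposition \ref{202411010055} and Lemma \ref{202410312042}, there is essentially no obstacle here; the statement is a clean corollary obtained by plugging radiciality into the previously established nil-ideal criterion.
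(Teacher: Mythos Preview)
Your proposal is correct and follows exactly the same approach as the paper: invoke Lemma~\ref{202410312042} with $C=B'$ to conclude that $\ker p$ is a nil ideal, and then apply Proposition~\ref{202411010055}. The paper's proof is just these two sentences, so your additional remarks about why $p=p_{B'}$ are extra care rather than a departure from the intended argument.
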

	
	\begin{proof}
		Since $f_R$ is radicial then Lemma \ref{202410312042} ensures that $\ker p$ is a nil ideal. Now, the result is a direct consequence of Proposition \ref{202411010055}. 
	\end{proof}
	
	As a consequence, we obtain for weak normalization a result similar to what Lipman achieved for the sequences he studied in the case of relative Lipschitz saturation (under the assumption of being faithfully flat \cite{L}), but with the hypothesis of universal injectivity.
	
	\begin{cor}\label{202305241951}
		Let $R\overset{\tau}{\rightarrow} A \overset{g}{\rightarrow}B\overset{f}{\rightarrow}B'$ be a sequence of ring morphisms, and assume that $f$ is universally injective. Then: $$\widetilde{A}_{B,R}=f^{-1}(\widetilde{A}_{B',R}).$$
	\end{cor}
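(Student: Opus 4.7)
The plan is to deduce this corollary as a direct specialization of Theorem \ref{202410241447}. The work essentially consists of constructing the right Maranesi diagram from the given sequence and verifying that the hypotheses of the theorem hold in this particular situation.

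First, I would invoke Corollary \ref{202410241314} applied to the sequence $R\overset{\tau}{\rightarrow}A\overset{g}{\rightarrow}B\overset{f}{\rightarrow}B'$ to produce the commutative square with $R'=R$, $A'=A$, $f_R=\mathrm{id}_R$, $f_A=\mathrm{id}_A$, and bottom composition $f\circ g$. That corollary guarantees this square is a strong Lipman diagram, and therefore a Maranesi diagram, so the saturation condition $\sqrt{\ker\varphi'}=\sqrt{\ker\varphi(B'\otimes_R B')}$ is automatic.

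Next, I need to verify the two hypotheses of Theorem \ref{202410241447}. The map $f$ is universally injective by hypothesis. The map $f_R=\mathrm{id}_R$ is trivially radicial: by Theorem \ref{202411171813}(c) it suffices that the kernel of the canonical morphism $R\otimes_R R\to R$ be a nil ideal, and here this map is an isomorphism, so the kernel is zero.

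With both hypotheses verified, Theorem \ref{202410241447} applies directly and yields $f^{-1}(\widetilde{A}_{B',R})=\widetilde{A}_{B,R}$, which is exactly the statement of the corollary. There is no real obstacle here — all the substantive work was done in Theorem \ref{202410241447} and Corollary \ref{202410241314}; this corollary is simply the observation that the ``Lipman-style'' sequence scenario ($R'=R$, $A'=A$) is already covered by the radicial plus universally injective hypothesis, since the identity $\mathrm{id}_R$ is vacuously radicial.
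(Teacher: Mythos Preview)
Your proof is correct and follows essentially the same approach as the paper: invoke Corollary~\ref{202410241314} to obtain the Maranesi diagram with $f_R=\mathrm{id}_R$ and $f_A=\mathrm{id}_A$, observe that $\mathrm{id}_R$ is radicial and $f$ is universally injective, and conclude via Theorem~\ref{202410241447}. The only difference is that you spell out the radiciality of $\mathrm{id}_R$ via Theorem~\ref{202411171813}(c), whereas the paper simply asserts it.
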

	
	\begin{proof}
		In Corollary \ref{202410241314} we have seen that the above sequence gives rise to a Maranesi diagram 
		\[\begin{tikzcd}
			R & A & B \\
			R & A & {B'}
			\arrow["\tau", from=1-1, to=1-2]
			\arrow["g", from=1-2, to=1-3]
			\arrow["\tau"', from=2-1, to=2-2]
			\arrow["{f\circ g}"', from=2-2, to=2-3]
			\arrow["{\textrm{id}_R}"', from=1-1, to=2-1]
			\arrow["{\textrm{id}_A}"', from=1-2, to=2-2]
			\arrow["f"', from=1-3, to=2-3]
		\end{tikzcd}\].
		
		In this case, $f_R=\id_R$ is radicial and $f$ is universally injective, and the proof is done.
	\end{proof}
	
	Another consequence of Theorem \ref{202410241447} is the possibility of changing the ground ring when computing the weak normalization, as long as the base rings are connected by a radicial morphism.

	
	\begin{cor}\label{202410251558}
		Consider a sequence of ring morphisms $R\overset{f_R}{\longrightarrow} R' \overset{}{\longrightarrow}A\overset{g}{\longrightarrow}B$. If $f_R$ is radicial then $$\widetilde{A}_{B,R}=\widetilde{A}_{B,R'}.$$ 
	\end{cor}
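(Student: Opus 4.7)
The plan is to mimic exactly the structure of the previous Corollary (the one where $\ker p$ is assumed to be a nil ideal), using the fact that radicial morphisms automatically give rise to nil kernel for the canonical map on the tensor squares.

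First I would build the obvious commutative square
\[\begin{tikzcd}
R & A & B \\
R' & A & B
\arrow[from=1-1, to=1-2]
\arrow["{f_R}"', from=1-1, to=2-1]
\arrow[from=2-1, to=2-2]
\arrow["g"', from=2-2, to=2-3]
\arrow["g", from=1-2, to=1-3]
\arrow["{\id_A}", from=1-2, to=2-2]
\arrow["{\id_B}", from=1-3, to=2-3]
\end{tikzcd}\]
coming from the given sequence. Since $f_A = \id_A$ is surjective, Proposition \ref{202306011826} guarantees this is a strong Lipman diagram, hence in particular a Maranesi diagram.

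Next I would observe that the hypotheses of Theorem \ref{202410241447} are satisfied: by assumption $f_R$ is radicial, and $f = \id_B$ is trivially universally injective. Applying that theorem yields $\id_B^{-1}(\widetilde{A}_{B,R'}) = \widetilde{A}_{B,R}$, i.e. $\widetilde{A}_{B,R'} = \widetilde{A}_{B,R}$.

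There is no real obstacle here; the substance of the argument is absorbed into Lemma \ref{202410312042} (radicial implies $\ker p$ nil) and Theorem \ref{202410241447}. The only thing to check is that the diagram above is indeed a Maranesi diagram, which is immediate from surjectivity of $\id_A$ via Proposition \ref{202306011826}, so the entire proof reduces to a one-line invocation of Theorem \ref{202410241447}.
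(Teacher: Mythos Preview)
Your proposal is correct and follows essentially the same approach as the paper: build the diagram with vertical maps $f_R$, $\id_A$, $\id_B$, observe it is a strong Lipman (hence Maranesi) diagram since $\id_A$ is surjective, and then apply Theorem \ref{202410241447} using that $f_R$ is radicial and $\id_B$ is universally injective. The paper's proof is the same one-line invocation of Theorem \ref{202410241447}.
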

	
	\begin{proof}
		From the above sequence, we get a strong Lipman diagram (hence, a Maranesi diagram) 
		\[\begin{tikzcd}
			R & A & B \\
			{R'} & A & B
			\arrow[from=1-1, to=1-2]
			\arrow["{f_R}"', from=1-1, to=2-1]
			\arrow[from=2-1, to=2-2]
			\arrow["g"', from=2-2, to=2-3]
			\arrow["g", from=1-2, to=1-3]
			\arrow["{\textrm{id}_A}", from=1-2, to=2-2]
			\arrow["{\textrm{id}_B}", from=1-3, to=2-3]
		\end{tikzcd}\]\noindent  and since $\id_B$ is universally injective then Theorem \ref{202410241447} yields $\id_B^{-1}(\widetilde{A}_{B,R'})=\widetilde{A}_{B,R}$. Hence, $\widetilde{A}_{B,R}=\widetilde{A}_{B,R'}$.
	\end{proof}
	
	Next, we explore other consequences of Theorem \ref{202410241447}. First, we will prove that some conditions on $f_A$ and $f$ ensure that the standard commutative diagram is Maranesi.
	
	\begin{prop}\label{202411031659}
		Suppose that
		\[\begin{tikzcd}
			R & A & B \\
			{R'} & {A'} & {B'}
			\arrow["\tau", from=1-1, to=1-2]
			\arrow["g", from=1-2, to=1-3]
			\arrow["{\tau'}", from=2-1, to=2-2]
			\arrow["{g'}", from=2-2, to=2-3]
			\arrow["{f_R}"', from=1-1, to=2-1]
			\arrow["{f}", from=1-3, to=2-3]
			\arrow["{f_A}", from=1-2, to=2-2]
		\end{tikzcd}\eqno{(\star)}\]
		
		\noindent is a commutative diagram of ring morphisms. If $f_A$ is radicial and $f$ is surjective then $(\star)$ is a Maranesi diagram.
	\end{prop}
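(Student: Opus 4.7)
The plan is to verify $\sqrt{\ker\varphi'} = \sqrt{\ker\varphi(B'\ten_{R'}B')}$. The containment $\sqrt{\ker\varphi(B'\ten_{R'}B')} \sub \sqrt{\ker\varphi'}$ holds for any commutative diagram of ring morphisms (it comes from the inclusion $\bar f(\ker\varphi) \sub \ker\varphi'$ discussed before Definition \ref{202306211340}), so only the reverse inclusion requires the new hypotheses. Since $f$ is surjective, both $f\ten_Rf$ and the canonical morphism $p$ are surjective, so $\bar f = p\circ(f\ten_Rf)$ is surjective; in particular the ideal $\ker\varphi(B'\ten_{R'}B')$ is nothing other than $\bar f(\ker\varphi)$. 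Using the description $\ker\varphi = \Delta(g(A))(B\ten_RB)$ together with the identity $\bar f\circ\Delta = \Delta'\circ f$, this ideal becomes
$$J := \Delta'(g'(f_A(A)))(B'\ten_{R'}B').$$

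Next, I would set $Q := (B'\ten_{R'}B')/J$ with projection $\pi$ and construct a ring morphism
$$\eta:\ A'\ten_A A' \longrightarrow Q,\qquad \eta(a'\ten_A b') := \pi\bigl(g'(a')\ten_{R'}g'(b')\bigr).$$
The main subtlety lies in the well-definedness of $\eta$, which reduces to the $A$-balanced identity: for every $a\in A$ and $a',b'\in A'$, the discrepancy
$$g'(f_A(a)a')\ten_{R'}g'(b') - g'(a')\ten_{R'}g'(f_A(a)b') = (g'(a')\ten_{R'}g'(b'))\cdot\Delta'(g'(f_A(a)))$$
belongs to $J$ by the explicit description above. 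Multiplicativity and unitality of $\eta$ then follow immediately from $g'$ being a ring morphism.

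Finally, since $f_A$ is radicial, Theorem \ref{202411171813}(d) guarantees that $a'\ten_A 1 - 1\ten_A a'$ is nilpotent in $A'\ten_A A'$ for every $a'\in A'$. Applying $\eta$, and observing that $\eta(a'\ten_A 1 - 1\ten_A a') = \pi(\Delta'(g'(a')))$, we conclude that $\Delta'(g'(a'))$ is nilpotent in $Q$, i.e., $\Delta'(g'(a'))\in\sqrt{J}$. Since $\ker\varphi' = \Delta'(g'(A'))(B'\ten_{R'}B')$ is generated as an ideal by these elements, we obtain $\ker\varphi'\sub\sqrt{J}$, hence $\sqrt{\ker\varphi'}\sub\sqrt{\ker\varphi(B'\ten_{R'}B')}$. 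The hardest step to verify carefully is the well-definedness of $\eta$: the crucial point is that the surjectivity of $f$ collapses the extended ideal to the image $\bar f(\ker\varphi)$, which is precisely what makes the generators of $J$ absorb the $A$-balanced discrepancy coming from the $A$-action on $A'$ via $f_A$.
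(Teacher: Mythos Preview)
Your proof is correct, and it takes a genuinely different route from the paper's. The paper works on the \emph{codomain} side of $\varphi$: it assembles the map $\overline{f_A}:=p_A\circ(f\ten_Af):B\ten_AB\to B'\ten_{A'}B'$, shows its kernel is nil by combining Lemma~\ref{202410312042} (radiciality of $f_A$ forces $\ker p_A$ nil) with Lemma~\ref{202410241420}(a) (integrality of $f$, via surjectivity, forces $\ker(f\ten_Af)$ nil), and then lifts an arbitrary $v\in\ker\varphi'$ through the surjection $\bar f$ to exhibit a power lying in $\ker\varphi$. You instead work on the \emph{domain} side, transporting nilpotence of $a'\ten_A1-1\ten_Aa'\in A'\ten_AA'$ directly into $Q=(B'\ten_{R'}B')/J$ via the ring morphism $\eta$. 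What your approach buys is that it avoids the auxiliary Lemmas~\ref{202410241420} and~\ref{202411031636} entirely and appeals only to Theorem~\ref{202411171813}(d); in fact your argument never truly uses the surjectivity of $f$---the identity $J=\Delta'(g'(f_A(A)))(B'\ten_{R'}B')$ for the extended ideal holds for any $f$, since an extended ideal is always generated by the images of generators, so your well-definedness check for $\eta$ goes through unconditionally. Thus your method actually yields the stronger statement that radiciality of $f_A$ alone makes $(\star)$ a Maranesi diagram. The paper's approach, by contrast, embeds the argument in its running framework of nil-kernel lemmas, which it reuses throughout Sections~2--4.
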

	
	\begin{proof}
		Since $f_A$ is radicial, by Lemma \ref{202410312042} one has $\ker p_A\sub\sqrt{\ideal{0_{B'\ten_AB'}}}$. As $f$ is surjective, consequently $f$ is integral, and Lemma \ref{202410241420} (a) ensures that $\ker(f\ten_Af)\sub\sqrt{\ideal{0_{B\ten_AB}}}$. By Lemma \ref{202411031636} we conclude that $\ker \overline{f_A}\sub\sqrt{\ideal{0_{B\ten_AB}}}$, where $\overline{f_A}:=p_A\circ(f\ten_Af)$.

		Once $f$ is surjective, clearly $f\ten_Rf$ is surjective, and the surjectivity of $p$ implies that $\overline{f}=p\circ(f\ten_Rf)$ is surjective. Hence, $$\ker\varphi(B'\ten_{R'}B')=\overline{f}(\ker\varphi).$$
		
		Thus, to prove that $(\star)$ is a Maranesi diagram, it suffices to prove that $\ker\varphi'\sub\sqrt{\overline{f}(\ker\varphi)}$. Let $v\in\ker\varphi'$. Since $\overline{f}$ is surjective then $v=\overline{f}(u)$, for some $u\in B\ten_RB$. Thus, $$\overline{f_A}(\varphi(u))=\varphi'(\overline{f}(u))=\varphi'(v)=0_{B'\ten_{R'}B'},$$
		
		\noindent and this implies $\varphi(u)\in\ker\overline{f_A}\sub\sqrt{\ideal{0_{B\ten_AB}}}$. Hence, there exists $r\in\bN$ such that $(\varphi(u))^r=0_{B\ten_AB}$, i.e., $\varphi(u^r)=0_{B\ten_AB}$. Therefore, $$u^r\in \ker\varphi\implies v^r=\overline{f}(u^r)\in\overline{f}(\ker\varphi)\implies v\in\sqrt{\overline{f}(\ker\varphi)}.$$
	\end{proof}

	
	We emphasize that in the next result, we do not need to require that the diagram be a Maranesi diagram, as the listed assumptions will automatically ensure this.
	
	\begin{cor}
		Suppose that 
		\[\begin{tikzcd}
			R & A \\
			&& B \\
			{R'} & {A'}
			\arrow["{f_R}", from=1-1, to=3-1]
			\arrow["{f_A}", from=1-2, to=3-2]
			\arrow["g", from=1-2, to=2-3]
			\arrow["{g'}"', from=3-2, to=2-3]
			\arrow["{\tau'}", from=3-1, to=3-2]
			\arrow["\tau", from=1-1, to=1-2]
		\end{tikzcd} \eqno{(\star)}\] \noindent is a commutative diagram of ring morphisms. If $f_R$ and $f_A$ are radicial then $$\widetilde{A}_{B,R}=\widetilde{A'}_{B,R'}.$$
	\end{cor}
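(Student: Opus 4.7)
The plan is to reduce the corollary to a single application of Theorem \ref{202410241447} by recasting the given data as a Maranesi diagram in which the right vertical arrow is the identity on $B$. Since $B = B'$ here, form the commutative square
\[\begin{tikzcd}
R & A & B \\
{R'} & {A'} & B
\arrow["\tau", from=1-1, to=1-2]
\arrow["g", from=1-2, to=1-3]
\arrow["{\tau'}"', from=2-1, to=2-2]
\arrow["{g'}"', from=2-2, to=2-3]
\arrow["{f_R}"', from=1-1, to=2-1]
\arrow["{f_A}", from=1-2, to=2-2]
\arrow["{\textrm{id}_B}", from=1-3, to=2-3]
\end{tikzcd}\]
which commutes: the left square is the hypothesis, and the right square commutes because $g = g' \circ f_A$ by the commutativity of the original triangle.

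Next, I would verify that this square is a Maranesi diagram. This is exactly the scope of Proposition \ref{202411031659}: the hypothesis there is that $f_A$ be radicial and that the right vertical morphism be surjective. Here $f_A$ is radicial by assumption and $\id_B$ is obviously surjective, so the proposition applies and the square is Maranesi.

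Finally I would invoke Theorem \ref{202410241447} on this Maranesi diagram. Its hypotheses are that $f_R$ be radicial and that the right vertical map be universally injective; both hold ($f_R$ radicial by assumption, $\id_B$ universally injective as the identity). The conclusion is
\[
\textrm{id}_B^{-1}\bigl(\widetilde{A'}_{B,R'}\bigr) = \widetilde{A}_{B,R},
\]
and since $\textrm{id}_B^{-1}(\widetilde{A'}_{B,R'}) = \widetilde{A'}_{B,R'}$, this gives $\widetilde{A}_{B,R} = \widetilde{A'}_{B,R'}$, as required.

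There is no real obstacle here: the whole point is that Proposition \ref{202411031659} and Theorem \ref{202410241447} have been built precisely to combine like this. The only thing to watch is checking that the right square commutes (i.e.\ that $g = g' \circ f_A$), which is immediate from the commutativity of the given diagram.
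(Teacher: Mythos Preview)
Your proof is correct and follows essentially the same approach as the paper: recast the triangle as a square with $\id_B$ on the right, apply Proposition \ref{202411031659} (using that $f_A$ is radicial and $\id_B$ is surjective) to get a Maranesi diagram, then apply Theorem \ref{202410241447} (using that $f_R$ is radicial and $\id_B$ is universally injective) to conclude.
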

	
	\begin{proof}
		Since $f_A$ is radicial and $\id_B$ is surjective then Proposition \ref{202411031659} guarantees that $(\star)$ is a Maranesi diagram. Since $f_R$ is radicial and $\id_B$ is universally injective then Theorem \ref{202410241447} implies that $\id_B^{-1}(\widetilde{A'}_{B,R'})=\widetilde{A}_{B,R}$, i.e., $$\widetilde{A'}_{B,R'}=\widetilde{A}_{B,R}.$$
	\end{proof}

	As a direct consequence, we obtain a version of the previous proposition formulated for the sequences analyzed by Lipman in \cite{L}.
	
	\begin{cor}
		If  $R\overset{}{\longrightarrow} A \overset{f_A}{\longrightarrow}A'\overset{}{\longrightarrow}B$ is a sequence of ring morphisms and $f_A$ is radicial then $$\widetilde{A}_{B,R}=\widetilde{A'}_{B,R}.$$
	\end{cor}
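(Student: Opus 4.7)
The plan is to reduce to the corollary immediately preceding the statement, namely the one asserting that in a commutative diagram
\[\begin{tikzcd}
R & A \\
&& B \\
{R'} & {A'}
\arrow["{f_R}", from=1-1, to=3-1]
\arrow["{f_A}", from=1-2, to=3-2]
\arrow["g", from=1-2, to=2-3]
\arrow["{g'}"', from=3-2, to=2-3]
\arrow["{\tau'}", from=3-1, to=3-2]
\arrow["\tau", from=1-1, to=1-2]
\end{tikzcd}\]
with both $f_R$ and $f_A$ radicial one has $\widetilde{A}_{B,R} = \widetilde{A'}_{B,R'}$. The idea is that the given sequence $R \to A \overset{f_A}{\to} A' \to B$ already furnishes all the data needed to build such a diagram, simply by taking $R' = R$ and $f_R = \mathrm{id}_R$.

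More concretely, I would first write $\tau \colon R \to A$ and $\tau' \colon R \to A'$ for the structural morphisms given by the sequence, and denote by $g' \colon A' \to B$ the last arrow of the sequence, so that $g := g' \circ f_A \colon A \to B$ is the composition. With $f_R = \mathrm{id}_R$ and the given $f_A$, the square
\[\begin{tikzcd}
R & A \\
&& B \\
R & {A'}
\arrow["{\mathrm{id}_R}", from=1-1, to=3-1]
\arrow["{f_A}", from=1-2, to=3-2]
\arrow["g", from=1-2, to=2-3]
\arrow["{g'}"', from=3-2, to=2-3]
\arrow["{\tau'}", from=3-1, to=3-2]
\arrow["\tau", from=1-1, to=1-2]
\end{tikzcd}\]
commutes (the left square by $\tau' = f_A \circ \tau$, which is part of the sequence being a sequence of ring morphisms, and the triangle by the definition of $g$).

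Next I would observe that $\mathrm{id}_R$ is trivially radicial: the induced map $\Spec(\mathrm{id}_R)$ is a bijection, and for every $\mathfrak{q} \in \Spec R$ the induced extension of fraction fields is the identity, hence purely inseparable. Alternatively, one may invoke Theorem \ref{202411171813}(d), since $r \otimes_R 1 - 1 \otimes_R r = 0$ in $R \otimes_R R$ for every $r \in R$. Combined with the hypothesis that $f_A$ is radicial, both vertical arrows in the diagram above are radicial.

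Applying the preceding corollary therefore yields $\widetilde{A}_{B,R} = \widetilde{A'}_{B,R'} = \widetilde{A'}_{B,R}$, which is the desired equality. There is no real obstacle here since the work has been done in the previous corollary; the only point to verify carefully is the commutativity of the diagram and the triviality of $\mathrm{id}_R$ being radicial, both of which are immediate.
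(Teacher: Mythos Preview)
Your proposal is correct and follows exactly the approach intended in the paper: the corollary is stated as a direct consequence of the preceding one, obtained by specializing to $R'=R$ and $f_R=\mathrm{id}_R$, which is trivially radicial. Your verification of the commutativity and of the radiciality of $\mathrm{id}_R$ is appropriate and nothing further is needed.
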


	In the following, we employ Theorem \ref{202410241447} to show that idempotency holds for the weak normalization even if $A$ is not necessarily an $R$-subalgebra of $B$.
	
	\begin{prop}
		If $R\overset{\tau}{\longrightarrow}A\overset{g}{\longrightarrow}B$ is a sequence of ring morphisms then $$\widetilde{(\widetilde{A}_{B,R})}_{B,R}=\widetilde{A}_{B,R}.$$ 
	\end{prop}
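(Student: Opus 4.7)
Let me set $C:=\widetilde{A}_{B,R}$, which by the properties listed above is an $R$-subalgebra of $B$ containing $g(A)$. The sequence $R\overset{}{\to} C\overset{i}{\hookrightarrow} B$ (with $i$ the inclusion) then produces a canonical morphism $\varphi_C:B\ten_RB\to B\ten_CB$ which defines $\widetilde{C}_{B,R}=\Delta^{-1}(\sqrt{\ker\varphi_C})$. The statement I want to prove is $\widetilde{C}_{B,R}=C$. The inclusion $C\subseteq \widetilde{C}_{B,R}$ is immediate from the general property that the relative weak normalization always contains the image of the middle algebra (here $i(C)=C$).

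The plan for the reverse inclusion is to reduce everything to the single equality
\[
\sqrt{\ker\varphi_C}=\sqrt{\ker\varphi},
\]
which would immediately yield $\widetilde{C}_{B,R}=\Delta^{-1}(\sqrt{\ker\varphi_C})=\Delta^{-1}(\sqrt{\ker\varphi})=\widetilde{A}_{B,R}=C$.

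To establish this equality, I would use the explicit generator description $\ker\varphi=\Delta(g(A))(B\ten_RB)$ recalled from \cite[8.7]{kleiman} in the preliminaries, which applied to the sequence $R\to C\hookrightarrow B$ gives $\ker\varphi_C=\Delta(C)(B\ten_RB)$. Since $g(A)\subseteq C$, the generators of $\ker\varphi$ lie among those of $\ker\varphi_C$, so $\ker\varphi\subseteq \ker\varphi_C$ and hence $\sqrt{\ker\varphi}\subseteq \sqrt{\ker\varphi_C}$. For the opposite direction, I would use the defining property of $C=\widetilde{A}_{B,R}$: every $c\in C$ satisfies $\Delta(c)\in\sqrt{\ker\varphi}$. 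Thus each generator of $\ker\varphi_C$ lies in the radical ideal $\sqrt{\ker\varphi}$, which forces $\ker\varphi_C\sub\sqrt{\ker\varphi}$, and taking radicals gives $\sqrt{\ker\varphi_C}\sub\sqrt{\sqrt{\ker\varphi}}=\sqrt{\ker\varphi}$.

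The main (mild) obstacle I anticipate is being careful with the generator description when the middle map is the inclusion $i:C\hookrightarrow B$ rather than an arbitrary $g$; but this is exactly the general formula from the preliminaries. As an alternative packaging, the same conclusion follows from Theorem \ref{202410241447}: the equality $\sqrt{\ker\varphi_C}=\sqrt{\ker\varphi}$ is precisely the condition that the commutative square with $\id_R$, the corestriction $A\to C$ of $g$, and $\id_B$ is a Maranesi diagram, and then the theorem (with $f_R=\id_R$ radicial and $f=\id_B$ universally injective) gives $\id_B^{-1}(\widetilde{C}_{B,R})=\widetilde{A}_{B,R}$, i.e. $\widetilde{C}_{B,R}=C$.
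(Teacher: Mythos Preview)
Your proposal is correct. The key observation in both your argument and the paper's is the same: using the generator description $\ker\varphi_C=\Delta(C)(B\ten_RB)$ together with the defining property $\Delta(C)\subseteq\sqrt{\ker\varphi}$ to obtain $\ker\varphi_C\subseteq\sqrt{\ker\varphi}$ (and the reverse inclusion from $g(A)\subseteq C$). Where you differ is in how you conclude. Your primary argument is the more elementary one: once $\sqrt{\ker\varphi_C}=\sqrt{\ker\varphi}$ is established, you apply $\Delta^{-1}$ directly and are done. The paper instead interprets that same equality as saying the square with $\id_R$, $A\to C$, $\id_B$ is a Maranesi diagram, and then invokes Theorem~\ref{202410241447} (radicial $\id_R$, universally injective $\id_B$) to deduce the contraction---which is precisely your ``alternative packaging''. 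So your alternative \emph{is} the paper's proof, while your main route bypasses the general machinery; the paper's choice emphasizes that idempotency is a formal consequence of the contraction theorem, whereas your direct argument makes clear that no nontrivial input (universal injectivity, etc.) is actually needed here.
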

	
	\begin{proof}
		Consider the ring morphism $f_A:A\rightarrow \widetilde{A}_{B,R}$ given by $f_A(a):=g(a)$, $\forall  a\in A$ and let $g':\widetilde{A}_{B,R}\hookrightarrow B$ be the inclusion map. Thus it is clear that 
		\[\begin{tikzcd}
			R & A & B \\
			R & {\widetilde{A}_{B,R}} & B
			\arrow["\tau", from=1-1, to=1-2]
			\arrow["g", from=1-2, to=1-3]
			\arrow["{f_A\circ\tau}"', from=2-1, to=2-2]
			\arrow["{f_A}"', from=1-2, to=2-2]
			\arrow["{\textrm{id}_R}"', from=1-1, to=2-1]
			\arrow["{\textrm{id}_B}", from=1-3, to=2-3]
			\arrow["{g'}"', hook, from=2-2, to=2-3]
		\end{tikzcd}\eqno{(\star)}\]
		
		\noindent commutes. We know that the kernel of the morphism $\varphi':B\ten_RB\rightarrow B\ten_{\widetilde{A}_{B,R}}B$ is generated by $\Delta(g'(\widetilde{A}_{B,R}))=\Delta(\widetilde{A}_{B,R})$. Since $\widetilde{A}_{B,R}=\Delta^{-1}(\sqrt{\ker\varphi})$ then $\ker\varphi'\sub\sqrt{\ker\varphi}$. As a result, $(\star)$ becomes a Maranesi diagram. Moreover, since $\id_R$ is radicial and $\id_B$ is universally injective then Theorem \ref{202410241447} ensures that $\id_B^{-1}(\widetilde{(\widetilde{A}_{B,R})}_{B,R})=\widetilde{A}_{B,R}$, implying $\widetilde{(\widetilde{A}_{B,R})}_{B,R}=\widetilde{A}_{B,R}$.
	\end{proof}

	\section{Integral contraction}
	
	In \cite{SR}, the authors proved the following theorem.
	
	\begin{teo}\cite[Theorem 3.3]{SR}\label{202410262320}	Suppose that
		\[\begin{tikzcd}
			R & A & B \\
			{R'} & {A'} & {B'}
			\arrow["\tau", from=1-1, to=1-2]
			\arrow["g", from=1-2, to=1-3]
			\arrow["{\tau'}", from=2-1, to=2-2]
			\arrow["{g'}", from=2-2, to=2-3]
			\arrow["{f_R}"', from=1-1, to=2-1]
			\arrow["{f}", from=1-3, to=2-3]
			\arrow["{f_A}", from=1-2, to=2-2]
		\end{tikzcd}\]
		
		\noindent is a Lipman diagram. If $f$ is an integral morphism and the kernel of the canonical morphism $p: B'\ten_RB'\rightarrow B'\ten_{R'}B'$ is a nil ideal then $$f^{-1}((A')^*_{B',R'})=A^*_{B,R}.$$
	\end{teo}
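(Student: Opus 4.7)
The plan is to establish the nontrivial inclusion $f^{-1}((A')^*_{B',R'}) \subseteq A^*_{B,R}$, since the reverse inclusion is immediate from Proposition \ref{prop3}. The strategy is to promote the composite $\bar f = p \circ (f \otimes_R f)$ to an integral map with nil kernel, and then apply Lemma \ref{202410241420}(b) to transfer integral closures from $B' \otimes_{R'} B'$ back down to $B \otimes_R B$. This is the exact integral-closure analogue of the radical-level argument used in Proposition \ref{202411010055}.

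First I would verify the two required properties of $\bar f$. The map $p$ is the canonical surjection, hence integral, with nil kernel by hypothesis. For $f \otimes_R f$, integrality is stable under base change, so factoring as $B \otimes_R B \to B' \otimes_R B \to B' \otimes_R B'$ shows it is integral; its kernel is nil by Lemma \ref{202410241420}(a) applied to the integral morphism $f$. Composition of integral morphisms is integral, and Lemma \ref{202411031636} (applied to the pair $f \otimes_R f$ and $p$) ensures that $\ker \bar f$ is nil as well.

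Next I would apply Lemma \ref{202410241420}(b) to $h = \bar f$ with $I = \ker\varphi$ to obtain
$$\overline{\ker\varphi} \;=\; \bar f^{-1}\bigl(\overline{\ker\varphi \cdot (B' \otimes_{R'} B')}\bigr).$$
Because the diagram is a Lipman diagram, $\overline{\ker\varphi \cdot (B' \otimes_{R'} B')} = \overline{\ker\varphi'}$, so $\overline{\ker\varphi} = \bar f^{-1}(\overline{\ker\varphi'})$. Using the commutativity $\bar f \circ \Delta = \Delta' \circ f$ established in diagram $(\clubsuit)$, this gives
$$f^{-1}\bigl((A')^*_{B',R'}\bigr) = f^{-1}\bigl((\Delta')^{-1}(\overline{\ker\varphi'})\bigr) = \Delta^{-1}\bigl(\bar f^{-1}(\overline{\ker\varphi'})\bigr) = \Delta^{-1}(\overline{\ker\varphi}) = A^*_{B,R}.$$

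The main obstacle is the nilness of $\ker \bar f$: the hypothesis must be split across both factors of $\bar f$, one piece handled by the assumption $\ker p$ nil, the other by Lemma \ref{202410241420}(a) via integrality of $f$. This nilness is precisely what permits the contraction step through $\bar f$; without the assumption on $\ker p$ the Lipman condition on $(B' \otimes_{R'} B', \ker\varphi')$ could not be pulled back to $(B \otimes_R B, \ker\varphi)$, and the argument would collapse.
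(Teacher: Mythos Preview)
Your proof is correct. The paper does not give its own proof of this statement---it is cited from \cite{SR}---but your argument is the exact integral-closure counterpart of the proof the paper gives for Theorem~\ref{202410241734} (the weak-normalization version): show $\bar f$ is integral with nil kernel, invoke Lemma~\ref{202410241420}(b) in place of Lemma~\ref{202410251426}, then finish via the Lipman condition and the commutativity $\bar f\circ\Delta=\Delta'\circ f$. One cosmetic remark: where you cite Lemma~\ref{202411031636} to conclude $\ker\bar f$ is nil, the paper instead computes $\ker\bar f=(f\ten_Rf)^{-1}(\ker p)$ directly and bounds it inside $\sqrt{\ideal{0}}$; both work.
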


	In this section, we will see that Maranesi diagrams are suitable for obtaining the contraction property when working with the integrality hypothesis.
	
	\begin{teo}[\textbf{Integral} contraction]\label{202410241734}
		Let \[\begin{tikzcd}
			R & A & B \\
			{R'} & {A'} & {B'}
			\arrow["\tau", from=1-1, to=1-2]
			\arrow["g", from=1-2, to=1-3]
			\arrow["{\tau'}", from=2-1, to=2-2]
			\arrow["{g'}", from=2-2, to=2-3]
			\arrow["{f_R}"', from=1-1, to=2-1]
			\arrow["{f}", from=1-3, to=2-3]
			\arrow["{f_A}", from=1-2, to=2-2]
		\end{tikzcd}\] be a Maranesi diagram. If $f$ is an integral morphism and the kernel of the canonical morphism $p: B'\ten_RB'\rightarrow B'\ten_{R'}B'$ is a nil ideal then  $$f^{-1}(\widetilde{A'}_{B',R'})=\widetilde{A}_{B,R}.$$
	\end{teo}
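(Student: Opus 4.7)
The plan is to mirror Proposition \ref{202411010055}, swapping its universal-injectivity input for integrality inputs that feed into the same machinery. The target identity is
\[
\bar{f}^{-1}(\sqrt{\ker\varphi'}) \;=\; \sqrt{\ker\varphi},
\]
after which the end of Proposition \ref{202411010055} applies verbatim: combining this with the commutativity relation $\bar{f}\circ\Delta = \Delta'\circ f$ yields $f^{-1}(\widetilde{A'}_{B',R'}) = \Delta^{-1}(\bar{f}^{-1}(\sqrt{\ker\varphi'})) = \Delta^{-1}(\sqrt{\ker\varphi}) = \widetilde{A}_{B,R}$. The reverse inclusion $\widetilde{A}_{B,R}\sub f^{-1}(\widetilde{A'}_{B',R'})$ is already furnished by Proposition \ref{202410240027}, so only the nontrivial inclusion needs work.

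To establish the target identity, I would factor $\bar{f} = p\circ(f\ten_R f)$ and apply Lemma \ref{202410251426} on each factor. Since $f$ is integral, so is $f\ten_R f$, and Lemma \ref{202410241420}(a) delivers the nil-kernel condition $\ker(f\ten_R f)\sub\sqrt{\ideal{0_{B\ten_R B}}}$; Lemma \ref{202410251426} with $I=\ker\varphi$ then gives $(f\ten_R f)^{-1}(\sqrt{\ker\varphi\cdot(B'\ten_R B')}) = \sqrt{\ker\varphi}$. On the other side, $p$ is surjective (hence integral) with nil kernel by hypothesis, so a second application of Lemma \ref{202410251426}, taking $I=\ker\varphi\cdot(B'\ten_R B')$ (so that $I(B'\ten_{R'}B') = \ker\varphi\cdot(B'\ten_{R'}B')$), produces $p^{-1}(\sqrt{\ker\varphi\cdot(B'\ten_{R'} B')}) = \sqrt{\ker\varphi\cdot(B'\ten_R B')}$. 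Splicing the two through the Maranesi equality $\sqrt{\ker\varphi'} = \sqrt{\ker\varphi\cdot(B'\ten_{R'}B')}$ completes the target identity.

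The main obstacle, and essentially the only place integrality of $f$ is really exploited, is verifying that $\ker(f\ten_R f)$ is nil, since without it Lemma \ref{202410251426} cannot be applied on the $B\ten_R B$ side. This is precisely the content of Lemma \ref{202410241420}(a), so once that input is invoked the proof reduces to a clean diagram chase, structurally identical to the universally injective case treated in Proposition \ref{202411010055}, with Lemma \ref{202410251426} replacing Proposition \ref{202410241303}.
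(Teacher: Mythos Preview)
Your proof is correct and essentially matches the paper's argument: both rely on Lemma \ref{202410241420}(a) to get $\ker(f\ten_R f)$ nil, then feed this into Lemma \ref{202410251426} together with the Maranesi condition to obtain $\bar{f}^{-1}(\sqrt{\ker\varphi'})=\sqrt{\ker\varphi}$, finishing via the diagram chase of Proposition \ref{202411010055}. The only cosmetic difference is that the paper first checks $\ker\bar f$ is nil (so $\bar f$ itself is integral with nil kernel) and applies Lemma \ref{202410251426} once to $\bar f$, whereas you apply it separately to the two factors $f\ten_R f$ and $p$; this is the same computation reorganized.
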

	
	\begin{proof}
		Since $f$ is integral then $f\ten_Rf$ is integral. By Lemma \ref{202410241420} (a), $\ker(f\ten_Rf)$ is a nil ideal, i.e., $\ker(f\ten_Rf)\sub\sqrt{\ideal{0_{B\ten_RB}}}$. Further, $p$ is surjective, so it is integral, which implies that $\bar{f}=p\circ(f\ten_Rf)$ is integral. Besides, using that $\ker p$ is a nil ideal, we have $$\ker\bar{f}=(f\ten_Rf)^{-1}(\ker p)\sub(f\ten_Rf)^{-1}(\sqrt{\ideal{0_{B\ten_RB}}})\sub\sqrt{(f\ten_Rf)^{-1}(\ideal{0_{B\ten_RB}})}$$$$=\sqrt{\ker(f\ten_Rf)}\sub\sqrt{\sqrt{\ideal{0_{B\ten_RB}}}}=\sqrt{\ideal{0_{B\ten_RB}}}.$$
		
		Thus, $\ker\bar{f}$ is a nil ideal, and Lemma \ref{202410251426} implies that $\bar{f}^{-1}(\sqrt{\ker\varphi(B'\ten_{R'}B')})=\sqrt{\ker\varphi}$. From now on, it suffices to proceed exactly as we did in the proof of Proposition  \ref{202411010055}.
	\end{proof}
	
	Theorem \ref{202410241734} also can be applied to obtain the contraction property for the weak normalization in the sequences studied by Lipman in \cite{L}.
	
	\begin{cor}\label{202410251524}
		Let $R\overset{\tau}{\rightarrow} A \overset{g}{\rightarrow}B\overset{f}{\rightarrow}B'$ be a sequence of ring morphisms, and assume that $f$ is integral.  Then: $$f^{-1}(\widetilde{A}_{B',R})=\widetilde{A}_{B,R}.$$
		
	\end{cor}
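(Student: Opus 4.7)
The plan is to reduce this corollary to a direct application of Theorem \ref{202410241734}, exactly in the spirit of how Corollary \ref{202305241951} was deduced from Theorem \ref{202410241447}. First, I would invoke Corollary \ref{202410241314} to produce from the given sequence the commutative diagram
\[\begin{tikzcd}
R & A & B \\
R & A & {B'}
\arrow["\tau", from=1-1, to=1-2]
\arrow["g", from=1-2, to=1-3]
\arrow["\tau"', from=2-1, to=2-2]
\arrow["{f\circ g}"', from=2-2, to=2-3]
\arrow["{\textrm{id}_R}"', from=1-1, to=2-1]
\arrow["{\textrm{id}_A}"', from=1-2, to=2-2]
\arrow["f"', from=1-3, to=2-3]
\end{tikzcd}\]
which is a strong Lipman diagram, and in particular a Maranesi diagram.

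Next, I would verify the two hypotheses of Theorem \ref{202410241734} for this diagram. The morphism $f$ is integral by assumption, so the first hypothesis is immediate. For the second, observe that in this setup $R' = R$, so the canonical morphism $p \colon B' \tens_R B' \to B' \tens_{R'} B'$ is just the identity map on $B' \tens_R B'$, whence $\ker p = \ideal{0}$ is trivially a nil ideal.

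Having checked all hypotheses, Theorem \ref{202410241734} applies directly and yields
$$f^{-1}\bigl(\widetilde{A}_{B',R}\bigr) = \widetilde{A}_{B,R},$$
where we used that $A' = A$ and $R' = R$ in the Maranesi diagram above. There is no real obstacle here; the content of the corollary is merely to observe that the ambient ring hypothesis in Theorem \ref{202410241734} becomes vacuous when the ground ring is unchanged, so that the integrality of $f$ alone suffices.
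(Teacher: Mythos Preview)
Your proposal is correct and matches the paper's own proof essentially verbatim: invoke Corollary \ref{202410241314} to obtain the Maranesi diagram, observe that $p$ is the identity (so $\ker p$ is trivially nil), and apply Theorem \ref{202410241734} using the assumed integrality of $f$.
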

	
	\begin{proof}
		In Corollary \ref{202410241314} we have seen the sequence gives rise to a Maranesi diagram 
		\[\begin{tikzcd}
			R & A & B \\
			R & A & {B'}
			\arrow["\tau", from=1-1, to=1-2]
			\arrow["g", from=1-2, to=1-3]
			\arrow["\tau"', from=2-1, to=2-2]
			\arrow["{f\circ g}"', from=2-2, to=2-3]
			\arrow["{\textrm{id}_R}"', from=1-1, to=2-1]
			\arrow["{\textrm{id}_A}"', from=1-2, to=2-2]
			\arrow["f"', from=1-3, to=2-3]
		\end{tikzcd}\].
		
		In this case, $p$ is the identity morphism, and in particular, its kernel is a nil ideal, which finishes the proof once $f$ is integral.
	\end{proof}
	
	We apply the integral contraction to show that the weak normalization commutes with the quotient by an ideal.
	
	\begin{prop}\label{202410251529}
		Let $R\overset{\tau}{\rightarrow} A \overset{g}{\rightarrow}B$ be a sequence of ring morphisms and let $I$ be an ideal of $A$. Then: $$\dfrac{\widetilde{A}_{B,R}}{I\widetilde{A}_{B,R}}\cong \widetilde{\left(\dfrac{A}{I}\right)}_{\frac{B}{IB},R}$$
		
		\noindent through an isomorphism which takes $u+I\widetilde{A}_{B,R}\mapsto u+IB, \forall u\in \widetilde{A}_{B,R}$.
	\end{prop}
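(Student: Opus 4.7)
The plan is to deduce the isomorphism as a direct application of the integral contraction theorem (Theorem \ref{202410241734}) to the Maranesi diagram obtained by quotienting. First I would form the commutative diagram
\[\begin{tikzcd}
R & A & B \\
R & {A/I} & {B/IB}
\arrow["\tau", from=1-1, to=1-2]
\arrow["g", from=1-2, to=1-3]
\arrow["{\textrm{id}_R}"', from=1-1, to=2-1]
\arrow["{\pi_A}"', from=1-2, to=2-2]
\arrow["{\pi_B}", from=1-3, to=2-3]
\arrow[from=2-1, to=2-2]
\arrow[from=2-2, to=2-3]
\end{tikzcd}\]
where $\pi_A$ and $\pi_B$ are the canonical projections. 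Since $\pi_A$ is surjective, Proposition \ref{202306011826} ensures this is a strong Lipman (hence Maranesi) diagram. Because $R' = R$, the canonical morphism $p: (B/IB)\tens_R(B/IB) \to (B/IB)\tens_R(B/IB)$ is the identity, so $\ker p = (0)$ is trivially a nil ideal. The map $\pi_B$ is surjective, hence integral, so Theorem \ref{202410241734} applies and yields the key contraction
\[\pi_B^{-1}\bigl(\widetilde{(A/I)}_{B/IB,R}\bigr) = \widetilde{A}_{B,R}.\]

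Next, I would define $\Phi: \widetilde{A}_{B,R}/I\widetilde{A}_{B,R} \to \widetilde{(A/I)}_{B/IB,R}$ by $u + I\widetilde{A}_{B,R} \mapsto u + IB$. Well-definedness follows from $I\widetilde{A}_{B,R} = g(I)\widetilde{A}_{B,R} \subseteq g(I)B = IB$, and the image lands in the codomain thanks to the inclusion $\pi_B(\widetilde{A}_{B,R}) \subseteq \widetilde{(A/I)}_{B/IB,R}$ given by the contraction. Surjectivity is also immediate from the contraction: given $v \in \widetilde{(A/I)}_{B/IB,R}$, use that $\pi_B$ is surjective to lift $v = \pi_B(u)$ with $u \in B$; the contraction then forces $u \in \widetilde{A}_{B,R}$, so $\Phi(u + I\widetilde{A}_{B,R}) = v$.

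The hardest step, which I expect to be the main obstacle, is injectivity. The kernel of $\Phi$ is $(\widetilde{A}_{B,R} \cap IB)/I\widetilde{A}_{B,R}$, so injectivity amounts to showing the equality
\[\widetilde{A}_{B,R} \cap IB = I\widetilde{A}_{B,R}.\]
The inclusion $\supseteq$ is clear. For the reverse inclusion, one starts with $u \in \widetilde{A}_{B,R}$ written as $u = \sum_i g(a_i) b_i$ with $a_i \in I$ and $b_i \in B$, and must produce an analogous expression with each $b_i$ replaced by an element of $\widetilde{A}_{B,R}$. This is not automatic from the contraction machinery developed so far; I expect the argument to exploit the defining condition $\Delta(u) \in \sqrt{\ker\varphi}$ together with the Leibniz rule to rewrite such a representation, possibly combined with the idempotency of the weak normalization already established in this section. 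Once this identification of the kernel is in hand, the isomorphism follows immediately, with the explicit formula $u + I\widetilde{A}_{B,R} \mapsto u + IB$.
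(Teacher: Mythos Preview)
Your approach matches the paper's exactly: the same quotient diagram, Proposition~\ref{202306011826} to certify it as a (strong Lipman, hence) Maranesi diagram, Theorem~\ref{202410241734} for the contraction $\pi_B^{-1}\bigl(\widetilde{(A/I)}_{B/IB,R}\bigr)=\widetilde{A}_{B,R}$, and the first isomorphism theorem to finish. The step you single out as the main obstacle --- the equality $\widetilde{A}_{B,R}\cap IB=I\widetilde{A}_{B,R}$ --- is simply asserted in the paper's proof without any further argument, so the Leibniz-rule/idempotency attack you sketch is not what the paper does; it treats that identity as immediate.
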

	
	\begin{proof}
		Consider the diagram 
		\[\begin{tikzcd}
			R && A && B \\
			\\
			R && {\dfrac{A}{I}} && {\dfrac{B}{IB}}
			\arrow["\tau", from=1-1, to=1-3]
			\arrow["g", from=1-3, to=1-5]
			\arrow["\pi\circ\tau"', from=3-1, to=3-3]
			\arrow["{\bar{g}}"', from=3-3, to=3-5]
			\arrow["{\textrm{id}_R}"', from=1-1, to=3-1]
			\arrow["\pi", from=1-3, to=3-3]
			\arrow["{\bar{\pi}}", from=1-5, to=3-5]
		\end{tikzcd} \eqno{(\star)}\]
		
		\noindent where $\pi$ and $\bar{\pi}$ are the quotient maps, and $\bar{g}$ is the canonical morphism induced by $g$ and $I$. It is clear this diagram is commutative, and since $\pi$ is surjective, Proposition \ref{202306011826} ensures that $(\star)$ is a strong Lipman diagram, hence a Maranesi diagram. In this case, $p:\dfrac{B}{IB}\ten_R\dfrac{B}{IB}\rightarrow \dfrac{B}{IB}\ten_R\dfrac{B}{IB}$ is the identity map, so its kernel is a nil ideal. Besides, the quotient map $\bar{\pi}$ is surjective, so it is integral. By Theorem \ref{202410241734} we have $$\bar{\pi}^{-1}\left(\widetilde{\left(\dfrac{A}{I}\right)}_{\frac{B}{IB},R}\right)=\widetilde{A}_{B,R}.$$ 
		
		Again, once $\bar{\pi}$ is surjective, then the last equation implies that $\bar{\pi}(\widetilde{A}_{B,R})= \widetilde{\left(\dfrac{A}{I}\right)}_{\frac{B}{IB},R}$, and also the morphism $$\begin{matrix}
			\widetilde{A}_{B,R} & \longrightarrow & \widetilde{\left(\dfrac{A}{I}\right)}_{\frac{B}{IB},R}\\
			u & \longmapsto & \bar{\pi}(u)=u+IB
		\end{matrix}$$
		
		\noindent is surjective, whose kernel is $\widetilde{A}_{B,R}\cap IB=I\widetilde{A}_{B,R}$. Therefore, this morphism induces the desired isomorphism.
	\end{proof}
	
	Next, we conclude that the quotient inherits the property to be weakly normal.
	
	\begin{cor}
		Let $R\overset{\tau}{\rightarrow} A \overset{g}{\rightarrow}B$ be a sequence of ring morphisms and let $I$ be an ideal of $A$. If $A$ is $R$-weakly normal in $B$ then $\dfrac{A}{I}$ is $R$-weakly normal in $\dfrac{B}{IB}$.
	\end{cor}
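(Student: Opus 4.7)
The plan is to invoke Proposition \ref{202410251529} directly: the key content we need is not merely the abstract isomorphism, but the explicit description of the map, namely $\bar{\pi}: \widetilde{A}_{B,R} \twoheadrightarrow \widetilde{(A/I)}_{B/IB,R}$ given by $u \mapsto u + IB$, which (as established in that proof) is surjective. Once we have this surjection in hand, the corollary becomes a matter of chasing the hypothesis $\widetilde{A}_{B,R} = g(A)$ through it.

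First, I would recall from (the proof of) Proposition \ref{202410251529} the crucial fact that $\bar{\pi}(\widetilde{A}_{B,R}) = \widetilde{(A/I)}_{B/IB,R}$, where $\bar{\pi}:B \to B/IB$ is the canonical projection. Next, invoking the hypothesis $\widetilde{A}_{B,R} = g(A)$, I would compute
\[
\widetilde{(A/I)}_{B/IB,R} \;=\; \bar{\pi}(\widetilde{A}_{B,R}) \;=\; \bar{\pi}(g(A)) \;=\; \{\, g(a) + IB \mid a \in A\,\}.
\]
Since by definition $\bar{g}:A/I \to B/IB$ satisfies $\bar{g}(a+I) = g(a)+IB$ for every $a \in A$, the right-hand side is exactly $\bar{g}(A/I)$. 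Therefore $\widetilde{(A/I)}_{B/IB,R} = \bar{g}(A/I)$, which is precisely the statement that $A/I$ is $R$-weakly normal in $B/IB$.

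There is no real obstacle here: once Proposition \ref{202410251529} is established, the corollary reduces to the trivial observation that surjections preserve the property "the weak normalization agrees with the image of the structural morphism". The only minor point worth being careful about is to use the \emph{surjectivity} statement extracted from the proof of Proposition \ref{202410251529}, rather than the bare isomorphism in its statement, so that we can rewrite $\widetilde{(A/I)}_{B/IB,R}$ directly as an image rather than a quotient; this is what makes the matching with $\bar{g}(A/I)$ immediate.
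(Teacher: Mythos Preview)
Your proposal is correct and follows essentially the same approach as the paper: both arguments extract from the proof of Proposition \ref{202410251529} the surjectivity $\bar{\pi}(\widetilde{A}_{B,R}) = \widetilde{(A/I)}_{B/IB,R}$, substitute the hypothesis $\widetilde{A}_{B,R}=g(A)$, and then identify $\bar{\pi}(g(A))$ with $\bar{g}(A/I)$ via $\bar{g}(a+I)=g(a)+IB$. The paper phrases the same computation through the isomorphism $\widetilde{\pi}$ and an element chase, but the content is identical.
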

	
	\begin{proof}
		Consider the diagram 
		\[\begin{tikzcd}
			R & A & B \\
			R & {\dfrac{A}{I}} & {\dfrac{B}{IB}}
			\arrow["\tau", from=1-1, to=1-2]
			\arrow["g", from=1-2, to=1-3]
			\arrow["{\textrm{id}_R}"', from=1-1, to=2-1]
			\arrow["\pi", from=1-2, to=2-2]
			\arrow["{\bar{\pi}}", from=1-3, to=2-3]
			\arrow["\pi\circ\tau"', from=2-1, to=2-2]
			\arrow["{\bar{g}}"', from=2-2, to=2-3]
		\end{tikzcd}\] \noindent and let $\widetilde{\pi}:\dfrac{\widetilde{A}_{B,R}}{I\widetilde{A}_{B,R}}\rightarrow \widetilde{\left(\dfrac{A}{I}\right)}_{\frac{B}{IB},R}$ be the isomorphism obtained in the previous proposition. 
		
		Let us prove that $\widetilde{\left(\dfrac{A}{I}\right)}_{\frac{B}{IB},R}\sub\bar{g}\left(\dfrac{A}{I}\right)$. If $w\in \widetilde{\left(\dfrac{A}{I}\right)}_{\frac{B}{IB},R}$ then we can write $$w=\widetilde{\pi}(u+I\widetilde{A}_{B,R})=u+IB,$$ \noindent for some $u\in \widetilde{A}_{B,R}$. Since $A$ is $R$-weakly normal in $B$ then $\widetilde{A}_{B,R}=g(A)$, and then there exists $a\in A$ such that $u=g(a)$. Hence, $w=g(a)+IB=\bar{\pi}(g(a))=\bar{g}(\pi(a))=\bar{g}(a+I)\in\bar{g}\left(\dfrac{A}{I}\right)$. 
		
		Therefore, $\dfrac{A}{I}$ is $R$-weakly normal in $\dfrac{B}{IB}$.
	\end{proof}

	Next, we want to prove an analogous result to \cite[Proposition 1.4]{L} for the weak normalization. First, we prove an auxiliary lemma.
	
	\begin{lema}\label{202410262158}
		Let $h: S\rightarrow T$ be a ring morphism and suppose that $\ker h$ is a nil ideal of $S$. If $x\in S$ and $h(x)\in\sqrt{(0_T)}$ then $x\in\sqrt{(0_S)}$.
	\end{lema}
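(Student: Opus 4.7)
The plan is to unwind the hypothesis $h(x) \in \sqrt{(0_T)}$ to produce a power of $x$ that lies in $\ker h$, and then exploit the nilpotence of $\ker h$ to pull that nilpotence back to $x$ itself.

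More precisely, I would first take $x \in S$ with $h(x) \in \sqrt{(0_T)}$ so that by definition of the nilradical there exists $n \in \bN$ with $h(x)^n = 0_T$. Since $h$ is a ring morphism, this is the same as $h(x^n) = 0_T$, which places $x^n \in \ker h$. Now invoking the hypothesis that $\ker h$ is a nil ideal of $S$, the element $x^n$ is itself nilpotent in $S$: there is some $m \in \bN$ with $(x^n)^m = 0_S$. Combining exponents gives $x^{nm} = 0_S$, hence $x \in \sqrt{(0_S)}$.

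This is essentially the same mechanism that appears in the proof of Lemma \ref{202411031636}, rephrased in terms of a single element rather than the whole kernel; equivalently, one may phrase it as the chain of inclusions $h^{-1}(\sqrt{(0_T)}) = \sqrt{h^{-1}(0_T)} = \sqrt{\ker h} \subseteq \sqrt{\sqrt{(0_S)}} = \sqrt{(0_S)}$, where the first equality uses Lemma \ref{202410240051} (b) and the inclusion uses that $\ker h$ is nil. There is no real obstacle here: the argument is a one-line manipulation, and the only subtlety to watch is making sure we apply $h$ to $x^n$ rather than to $x$ directly when translating between the two nilradicals.
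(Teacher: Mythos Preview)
Your proof is correct and matches the paper's argument almost verbatim: the paper also picks $r$ with $h(x^r)=0_T$, notes $x^r\in\ker h\subseteq\sqrt{(0_S)}$, and concludes $x\in\sqrt{\sqrt{(0_S)}}=\sqrt{(0_S)}$. The only cosmetic difference is that you spell out the extra exponent $m$ explicitly while the paper absorbs it into the identity $\sqrt{\sqrt{(0_S)}}=\sqrt{(0_S)}$.
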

	
	\begin{proof}
		By hypothesis there exists $r\in\bN$ such that $(h(x))^r=0_T$, i.e., $h(x^r)=0_T$. Thus, $x^r\in\ker h\sub\sqrt{(0_S)}$ and this implies that $x\in\sqrt{\sqrt{(0_S)}}=\sqrt{(0_S)}$.
	\end{proof}
	
	\begin{prop}
		Consider a sequence of ring morphisms $R\longrightarrow A\overset{g}{\longrightarrow}B$ and suppose that $g$ is an integral morphism. Then $\widetilde{A}_{B,R}$ is a radicial $A$-algebra.
	\end{prop}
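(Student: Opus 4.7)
The plan is to use Theorem \ref{202411171813}, condition (d), applied to the structure morphism $\bar{g}: A \rightarrow \widetilde{A}_{B,R}$ (which sends $a$ to $g(a)$). So I need to show that for every $b \in \widetilde{A}_{B,R}$, the element $b \tens_A 1 - 1 \tens_A b$ is nilpotent in $\widetilde{A}_{B,R}\tens_A\widetilde{A}_{B,R}$.

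First I would unpack the definition of $\widetilde{A}_{B,R}$. If $b \in \widetilde{A}_{B,R}$, then $\Delta(b) = b\tens_R 1 - 1\tens_R b \in \sqrt{\ker\varphi}$, so there is $n \in \bN$ with $\Delta(b)^n \in \ker\varphi$. Applying $\varphi$ and using that $\varphi$ is an $R$-algebra morphism, this says $(b\tens_A 1 - 1\tens_A b)^n = 0$ in $B\tens_A B$. So the element $b\tens_A 1 - 1\tens_A b$ is already nilpotent once we push forward along the inclusion $\iota:\widetilde{A}_{B,R}\hookrightarrow B$; the task is to pull this nilpotency back to $\widetilde{A}_{B,R}\tens_A\widetilde{A}_{B,R}$.

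The key step is to observe that the inclusion $\iota$ is an \emph{integral} morphism of $A$-algebras. This is because any $b \in B$ satisfies a monic polynomial over $g(A) \sub \iota(\widetilde{A}_{B,R})$, since $g$ is integral by hypothesis and factors as $\iota\circ\bar{g}$. Then Lemma \ref{202410241420}(a), applied to $\iota$ viewed as a morphism of $A$-algebras, gives that $\ker(\iota\tens_A\iota)$ is a nil ideal of $\widetilde{A}_{B,R}\tens_A\widetilde{A}_{B,R}$.

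Finally, $(\iota\tens_A\iota)(b\tens_A 1 - 1\tens_A b) = b\tens_A 1 - 1\tens_A b$ is nilpotent in $B\tens_A B$, so Lemma \ref{202410262158} applied to $\iota\tens_A\iota$ yields that $b\tens_A 1 - 1\tens_A b$ lies in $\sqrt{\ideal{0_{\widetilde{A}_{B,R}\tens_A\widetilde{A}_{B,R}}}}$. Condition (d) of Theorem \ref{202411171813} then gives radiciality of $\bar{g}$. The only mildly delicate step is the verification that $\iota$ is integral, since one must remember that $g(A)\sub\widetilde{A}_{B,R}$ gives the needed factorization; the rest is a direct assembly of the two lemmas proved just before.
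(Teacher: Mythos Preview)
Your proof is correct and follows essentially the same route as the paper: both arguments verify condition (d) of Theorem~\ref{202411171813} by noting that $\varphi(\Delta(b))$ is nilpotent in $B\ten_A B$, establishing that the inclusion $\iota:\widetilde{A}_{B,R}\hookrightarrow B$ is integral (since $g(A)\subseteq\widetilde{A}_{B,R}$), invoking Lemma~\ref{202410241420}(a) to get that $\ker(\iota\ten_A\iota)$ is nil, and then applying Lemma~\ref{202410262158} to pull the nilpotency back. The only cosmetic difference is that the paper frames the start via condition (c) before reducing to the generators $x\ten_A1-1\ten_Ax$, which amounts to the same thing.
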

	
	\begin{proof}
		We need to check that the kernel of the canonical morphism $\gamma:\widetilde{A}_{B,R}\ten_A\widetilde{A}_{B,R}\rightarrow \widetilde{A}_{B,R}$ is a nil ideal of $\widetilde{A}_{B,R}\ten_A\widetilde{A}_{B,R}$. Since $\ker\gamma$ is generated by $\{x\ten_A1_B-1_B\ten_Ax\mid x\in\widetilde{A}_{B,R}\mid x\in\widetilde{A}_{B,R}\}$, it suffices to show that $$\delta(x):=x\ten_A1_B-1_B\ten_Ax\in\sqrt{\ideal{0_{\widetilde{A}_{B,R}\ten_A\widetilde{A}_{B,R}}}}, \forall x\in \widetilde{A}_{B,R}.$$ 
		
		Observe that, by hypothesis, $B$ is integral over $g(A)$, and since $g(A)\sub\widetilde{A}_{B,R}\sub B$ then $B$ is integral over $\widetilde{A}_{B,R}$. Consequently, the inclusion $\iota:\widetilde{A}_{B,R}\hookrightarrow B$ is an integral morphism of $A$-algebras. By Lemma \ref{202410241420}, we can conclude that the kernel of the ring morphism $\iota\ten_A\iota:\widetilde{A}_{B,R}\ten_A\widetilde{A}_{B,R}\rightarrow B\ten_AB$ is a nil ideal of $\widetilde{A}_{B,R}\ten_A\widetilde{A}_{B,R}$. 
		
		Finally, let us check what remains. Let $x\in\widetilde{A}_{B,R}$. So, $\Delta(x)\in\sqrt{\ker\varphi}$, i.e., there exists $n\in\bN$ such that $(\Delta(x))^n=0_{B\ten_RB}$, and thus $(\varphi(\Delta(x)))^n=0_{B\ten_AB}$. It is easy to see that $(\iota\ten_A\iota)(\delta(x))=\varphi(\Delta(x))$, hence $(\iota\ten_A\iota)(\delta(x))\in\sqrt{(0_{B\ten_AB})}$. Since $\ker(\iota\ten_A\iota)$ is a nil ideal then Lemma \ref{202410262158} ensures that $\delta(x)\in\sqrt{0_{\widetilde{A}_{B,R}\ten_A\widetilde{A}_{B,R}}}$, which finishes the proof.		
	\end{proof}

	\section{Radicial and unramified contraction}
	
	In \cite{L} Lipman proved the following proposition.
	
	\begin{prop}\cite[Proposition 1.3]{L}\label{202410262346}
		Supppose that $R\overset{f_R}{\longrightarrow}R'\longrightarrow A\overset{g}{\longrightarrow} B$ is a sequence of ring morphisms. If $f_R$ is radicial or unramified then $A^*_{B,R}=A^*_{B,R'}$. 
	\end{prop}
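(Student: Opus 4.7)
The plan is to view the sequence as the data of a strong Lipman diagram and invoke the integral contraction theorem. Concretely, from the sequence $R\overset{f_R}{\longrightarrow}R'\longrightarrow A\overset{g}{\longrightarrow}B$ form the commutative diagram
\[\begin{tikzcd}
R & A & B \\
R' & A & B
\arrow[from=1-1, to=1-2]
\arrow["g", from=1-2, to=1-3]
\arrow["{f_R}"', from=1-1, to=2-1]
\arrow["{\id_A}", from=1-2, to=2-2]
\arrow["{\id_B}", from=1-3, to=2-3]
\arrow[from=2-1, to=2-2]
\arrow["g"', from=2-2, to=2-3]
\end{tikzcd}\]
Since $\id_A$ is surjective, Proposition \ref{202306011826} says this is a strong Lipman diagram, hence a Lipman diagram. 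As $\id_B$ is trivially integral, Theorem \ref{202410262320} will give $A^*_{B,R}=\id_B^{-1}(A^*_{B,R'})=A^*_{B,R'}$ provided the kernel of $p:B\ten_RB\to B\ten_{R'}B$ is a nil ideal.

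If $f_R$ is radicial, this is immediate: Lemma \ref{202410312042}, applied with $C:=B$ viewed as an $R'$-algebra through $R'\to A\overset{g}{\to}B$, gives precisely that $\ker p$ is nil, so Theorem \ref{202410262320} closes the radicial case.

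The main obstacle is the unramified case, because then one only gets that $\ker\gamma$ (for the canonical $\gamma:R'\ten_RR'\to R'$) is an idempotent ideal rather than a nil one, and so neither Lemma \ref{202410312042} nor Theorem \ref{202410262320} can be applied verbatim. The way around is to reproduce the argument of Theorem \ref{202410262320} by hand, using the structure of $\ker p$. Writing $\ker p=\sigma(\ker\gamma)(B\ten_RB)$ with $\sigma:=\lambda\ten_R\lambda$ for the structure map $\lambda:R'\to B$ (as in the proof of Lemma \ref{202410312042}), a short calculation shows that idempotency of $\ker\gamma$ transfers to idempotency of $\ker p$; moreover $\ker p\sub\ker\varphi$ is automatic from $\varphi=\varphi'\circ p$. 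For the decisive inclusion $p^{-1}(\overline{\ker\varphi'})\sub\overline{\ker\varphi}$, take $y\in B\ten_RB$ with $p(y)\in\overline{\ker\varphi'}$; surjectivity of $p$ lets one lift the coefficients of an integral equation for $p(y)$ to $\tilde c_i\in(\ker\varphi)^i$, yielding $y^n+\sum_{i=1}^{n}\tilde c_iy^{n-i}=w\in\ker p$. The key step is the chain $\ker p=(\ker p)^n\sub(\ker\varphi)^n$ (from idempotency together with $\ker p\sub\ker\varphi$), which forces $w\in(\ker\varphi)^n$ and converts the equality above into $y^n+\tilde c_1y^{n-1}+\cdots+\tilde c_{n-1}y+(\tilde c_n-w)=0$, a bona fide integral equation for $y$ over $\ker\varphi$. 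The reverse inclusion $\overline{\ker\varphi}\sub p^{-1}(\overline{\ker\varphi'})$ being immediate, we conclude $\overline{\ker\varphi}=p^{-1}(\overline{\ker\varphi'})$; pulling back by $\Delta$ yields $A^*_{B,R}=A^*_{B,R'}$ in the unramified case as well.
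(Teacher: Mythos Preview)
Your argument is correct. For the radicial case it coincides with the paper's route (Corollary~\ref{202410262342} via Theorem~\ref{202410262329}, which in turn is exactly Lemma~\ref{202410312042} plus Theorem~\ref{202410262320}).

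For the unramified case you take a genuinely different and more elementary path than the paper. The paper recovers this statement as the special case $f_A=\id_A$, $f=\id_B$ of Theorem~\ref{202410280007}, whose proof proceeds by lifting the idempotent generator of $\ker\gamma$ to $e_0\in B\ten_RB$, forming the ideals $J_1=\ker(f\ten_Rf)+\ideal{e_0}$ and $J_2=\ideal{\mathbf{1}-e_0}$ with $J_1J_2$ nil, and invoking Lipman's Lemma~\ref{202410262229}. Your argument bypasses this decomposition entirely: you use instead the two structural facts specific to the present diagram, namely that $\ker p$ is idempotent (since it is generated by the idempotent $e=\sigma(e')$) and that $\ker p\sub\ker\varphi$ (since here $\varphi=\varphi'\circ p$), to obtain $\ker p=(\ker p)^n\sub(\ker\varphi)^n$ and absorb the defect $w$ directly into the constant term of the integral equation. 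This is shorter and avoids Lemma~\ref{202410262229}, but it hinges on the factorization $\varphi=\varphi'\circ p$, which is available precisely because $R'\to A$ sits inside the sequence; it would not survive in the general setting of Theorem~\ref{202410280007}, where $A'\neq A$ and no such factorization exists. So your approach buys simplicity at the cost of generality, while the paper's buys a theorem that handles arbitrary surjective $f$ and radicial $f_A$.
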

	
	Next, we prove an analogous result to Theorems \ref{202410241447} and \ref{202410241734} combining radiciality of $f_R$ with integrality of $f$, and this result will generalize Proposition \ref{202410262346} for the relative Lipschitz saturation in the case of radicial algebras.
	
	\begin{teo}[\textbf{Radicial + Integral} contraction]\label{202410262329}
		Consider the following commutative diagram of ring morphisms 
		\[\begin{tikzcd}
			R & A & B \\
			{R'} & {A'} & {B'}
			\arrow["\tau", from=1-1, to=1-2]
			\arrow["{f_R}"', from=1-1, to=2-1]
			\arrow["g", from=1-2, to=1-3]
			\arrow["{f_A}"', from=1-2, to=2-2]
			\arrow["f"', from=1-3, to=2-3]
			\arrow["{\tau'}"', from=2-1, to=2-2]
			\arrow["{g'}"', from=2-2, to=2-3]
		\end{tikzcd}\eqno{(\star)}\]
		
		\noindent and suppose that $f_R$ is radicial and $f$ is integral.
		
		\begin{enumerate}
			
			\item [a)] If $(\star)$ is a Lipman diagram then $f^{-1}((A')^*_{B',R'})=A^*_{B,R}$;
			
			\item [b)] If $(\star)$ is a Maranesi diagram then $f^{-1}(\widetilde{A'}_{B', R'})=\widetilde{A}_{B,R}$.
		\end{enumerate}
	\end{teo}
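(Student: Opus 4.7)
The plan is to observe that this theorem is essentially a corollary of the integral contraction results already established (Theorem \ref{202410262320} for Lipschitz saturation and Theorem \ref{202410241734} for weak normalization), together with Lemma \ref{202410312042}, which converts the radiciality hypothesis on $f_R$ into the nilpotency hypothesis on the kernel of the canonical morphism $p: B'\tens_R B'\to B'\tens_{R'}B'$.

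More precisely, the key observation is that $B'$ is an $R'$-algebra via $g'\circ\tau'$, so Lemma \ref{202410312042} applies directly with $C=B'$: since $f_R:R\rightarrow R'$ is radicial, the kernel of $p:B'\tens_R B'\to B'\tens_{R'}B'$ is a nil ideal of $B'\tens_R B'$. This is exactly the extra hypothesis required by the integral contraction theorems previously obtained.

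For part (b), once we know $\ker p$ is a nil ideal and $f$ is integral, and $(\star)$ is assumed to be a Maranesi diagram, Theorem \ref{202410241734} immediately yields $f^{-1}(\widetilde{A'}_{B',R'})=\widetilde{A}_{B,R}$. For part (a), the same reduction works verbatim: $\ker p$ nil plus $f$ integral plus the Lipman diagram hypothesis are exactly the hypotheses of Theorem \ref{202410262320}, which gives $f^{-1}((A')^*_{B',R'})=A^*_{B,R}$.

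There is no genuine obstacle here; the proof is a one-line invocation of Lemma \ref{202410312042} to convert the radicial hypothesis on $f_R$ into the nil hypothesis on $\ker p$, followed by applying the appropriate previously established integral contraction result in each case. The substantive content of the theorem lies not in the proof but in packaging these hypotheses in a way that mirrors and extends Lipman's Proposition \ref{202410262346}: whereas Lipman treated sequences where $f_R$ is radicial (so $p$ becomes the identity in his setting), here we allow a genuinely non-trivial base change $f_R$, and the radiciality of $f_R$ is precisely the mechanism that keeps the behavior of $p$ controllable.
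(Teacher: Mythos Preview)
Your proposal is correct and follows exactly the same approach as the paper's proof: invoke Lemma \ref{202410312042} (with $C=B'$) to conclude from the radiciality of $f_R$ that $\ker p$ is a nil ideal, and then apply Theorem \ref{202410262320} for part (a) and Theorem \ref{202410241734} for part (b). The paper's proof is literally this two-line argument, so there is nothing to add.
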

	
	\begin{proof}
		Since $f_R$ is radicial then by Lemma \ref{202410312042}  $\ker p$ is a nil ideal of $B'\ten_RB'$. Therefore, item (a) is a consequence of Theorem \ref{202410262320}, and item (b) follows by Theorem \ref{202410241734}.	
	\end{proof}
	
	\begin{cor}\label{202410262342}
		If $R\overset{f_R}{\longrightarrow}R'\longrightarrow A\overset{g}{\longrightarrow} B$ is a sequence of ring morphisms and $f_R$ is radicial then: 
		
		\begin{enumerate}
			\item [a)] $A^*_{B,R}=A^*_{B,R'}$;
			
			\item [b)] $\widetilde{A}_{B,R}=\widetilde{A}_{B,R'}$.

		\end{enumerate}
	\end{cor}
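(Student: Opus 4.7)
The plan is to assemble from the given sequence a single commutative square to which Theorem \ref{202410262329} applies directly, yielding both conclusions simultaneously. Write $\tau: R \to A$ for the composition $R \xrightarrow{f_R} R' \to A$, and consider the diagram
\[\begin{tikzcd}
	R & A & B \\
	{R'} & A & B
	\arrow["\tau", from=1-1, to=1-2]
	\arrow["g", from=1-2, to=1-3]
	\arrow["{f_R}"', from=1-1, to=2-1]
	\arrow["{\textrm{id}_A}"', from=1-2, to=2-2]
	\arrow["{\textrm{id}_B}", from=1-3, to=2-3]
	\arrow[from=2-1, to=2-2]
	\arrow["g"', from=2-2, to=2-3]
\end{tikzcd}\]
whose commutativity is immediate from the definition of $\tau$.

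The next step is to check the hypotheses of Theorem \ref{202410262329}. Since $f_A = \id_A$ is surjective, Proposition \ref{202306011826} guarantees that this diagram is a strong Lipman diagram, and therefore in particular both a Lipman diagram and a Maranesi diagram. The hypothesis that $f_R$ is radicial is given, and $f = \id_B$ is integral (trivially, being the identity). Hence both items (a) and (b) of Theorem \ref{202410262329} apply to this square.

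Applying the theorem with $f = \id_B$ then gives $\id_B^{-1}(A^*_{B,R'}) = A^*_{B,R}$ and $\id_B^{-1}(\widetilde{A}_{B,R'}) = \widetilde{A}_{B,R}$, which simplify to the desired equalities. There is no genuine obstacle here: the content of the corollary is entirely absorbed by Theorem \ref{202410262329}, and the role of the proof is only to recognize that the sequence in question produces a diagram to which that theorem applies with the trivial choice of vertical map on the right, so that the contraction along $\id_B$ collapses to an equality of the two saturations (respectively, weak normalizations).
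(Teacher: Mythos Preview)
Your proof is correct and follows essentially the same approach as the paper: both construct the identical diagram with vertical maps $f_R$, $\id_A$, $\id_B$, observe that it is a strong Lipman (hence Lipman and Maranesi) diagram, and then invoke Theorem~\ref{202410262329} directly. You simply spell out in more detail why the hypotheses of that theorem are met (surjectivity of $\id_A$ via Proposition~\ref{202306011826}, integrality of $\id_B$), whereas the paper leaves this implicit.
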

	
	\begin{proof}
		It is a immediate consequence of Theorem \ref{202410262329}, since 
		\[\begin{tikzcd}
			R & A & B \\
			{R'} & A & B
			\arrow[from=1-1, to=1-2]
			\arrow["{f_R}"', from=1-1, to=2-1]
			\arrow["g", from=1-2, to=1-3]
			\arrow["{\textrm{id}_A}"', from=1-2, to=2-2]
			\arrow["{\textrm{id}_B}"', from=1-3, to=2-3]
			\arrow[from=2-1, to=2-2]
			\arrow["g"', from=2-2, to=2-3]
		\end{tikzcd}\] is a strong Lipman diagram, hence, a Lipman/Maranesi diagram.
	\end{proof}
	
	\begin{cor}
		Let $R$ be a ring and  $Q(R)$ be the total quotient ring of $R$. If $g:A\rightarrow B$ is a morphism of algebras over $Q(R)$ then: 
		
		\begin{enumerate}
			\item [a)] $A^*_{B,R}=A^*_{B, Q(R)}$;
			
			\item [b)] $\widetilde{A}_{B,R}=\widetilde{A}_{B, Q(R)}$.

		\end{enumerate}

	\end{cor}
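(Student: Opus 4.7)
The plan is to recognize this as an immediate specialization of Corollary \ref{202410262342} once we verify that the localization map $\rho : R \to Q(R)$ is radicial. Specifically, since $g : A \to B$ is a morphism of $Q(R)$-algebras, both $A$ and $B$ carry $Q(R)$-algebra structures, so we obtain a canonical sequence of ring morphisms
\[
R \overset{\rho}{\longrightarrow} Q(R) \longrightarrow A \overset{g}{\longrightarrow} B,
\]
where $Q(R) \to A$ is the structural morphism and the composite $R \to A$ is the one inducing the $R$-algebra structures of $A$ and $B$.

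The first step is to invoke Example \ref{202411172308}: writing $Q(R) = S^{-1}R$ for $S$ the set of non-zerodivisors of $R$, the localization map $\rho$ is a radicial ring morphism. Once this is noted, the sequence above fits precisely into the hypothesis of Corollary \ref{202410262342} with $f_R = \rho$ playing the role of the radicial morphism between the two base rings.

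Applying Corollary \ref{202410262342}(a) then yields $A^*_{B,R} = A^*_{B, Q(R)}$, and applying Corollary \ref{202410262342}(b) yields $\widetilde{A}_{B,R} = \widetilde{A}_{B, Q(R)}$. No further work is required, since both claims follow at once. There is no substantive obstacle: the only conceptual point is to recall that the total quotient ring is a localization, hence radicial over $R$ by Example \ref{202411172308}, after which the corollary does all the work.
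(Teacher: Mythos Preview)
Your proposal is correct and follows essentially the same approach as the paper: both recognize $Q(R)=S^{-1}R$ as a localization, invoke Example \ref{202411172308} to conclude that $\rho:R\to Q(R)$ is radicial, and then apply Corollary \ref{202410262342} to the sequence $R\to Q(R)\to A\to B$. Your write-up is slightly more explicit about setting up the sequence, but the argument is identical.
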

	
	\begin{proof}
		If $S$ is the subset of elements of $R$ which are not zero divisors in $R$, by definition $Q(R)=S^{-1}R$. In Example \ref{202411172308}, we have seen that the localization map $\rho:R\rightarrow Q(R)$ is a radicial ring morphism. Hence, this result follows by Corollary \ref{202410262342}.
	\end{proof}
	
	Notice that Corollary \ref{202410262342} (a) was obtained by Lipman in Proposition \ref{202410262346}, and in item (b) we obtained its analogous version for the relative weak normalization. Also, item (b) was already obtained in Corollary \ref{202410251558} using another strategy.
	
	As a consequence, Corollary \ref{202410262342} (b) allows us to obtain an analogous result to \cite[Proposition 3.8]{SR} for the weak normalization.
	
	Consider again the sequence of ring morphism $R\overset{\tau}{\rightarrow} A \overset{g}{\rightarrow}B$ where $I$ is an ideal of $A$. In comparison with the diagram used in Proposition \ref{202410251529}, we can form another type of diagram 
	\[\begin{tikzcd}
		R && A && B \\
		\\
		{\dfrac{R}{\tau^{-1}(I)}} && {\dfrac{A}{I}} && {\dfrac{B}{IB}}
		\arrow["\tau", from=1-1, to=1-3]
		\arrow["g", from=1-3, to=1-5]
		\arrow["{\bar{\tau}}"', from=3-1, to=3-3]
		\arrow["{\bar{g}}"', from=3-3, to=3-5]
		\arrow["{\pi_R}"', from=1-1, to=3-1]
		\arrow["\pi", from=1-3, to=3-3]
		\arrow["{\bar{\pi}}", from=1-5, to=3-5]
	\end{tikzcd}\].
	
	Next, we show that we can replace $R$ by $\dfrac{R}{\tau^{-1}(I)}$ on the right-hand side of the conclusion of Proposition \ref{202410251529}.
	
	\begin{prop}
		With the above notation, $$\widetilde{\left(\dfrac{A}{I}\right)}_{\frac{B}{IB},\frac{R}{\tau^{-1}(I)}}=\widetilde{\left(\dfrac{A}{I}\right)}_{\frac{B}{IB},R}\cong\dfrac{\widetilde{A}_{B,R}}{I\widetilde{A}_{B,R}}$$
	\end{prop}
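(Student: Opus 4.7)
The plan is to split the assertion into its two parts: the equality on the left and the isomorphism on the right. The right-hand isomorphism $\widetilde{(A/I)}_{B/IB,R}\cong \widetilde{A}_{B,R}/I\widetilde{A}_{B,R}$ is literally the content of Proposition \ref{202410251529}, so nothing new needs to be done there. All the new content is in establishing the equality
$$\widetilde{\left(\dfrac{A}{I}\right)}_{\frac{B}{IB},\frac{R}{\tau^{-1}(I)}}=\widetilde{\left(\dfrac{A}{I}\right)}_{\frac{B}{IB},R},$$
which I propose to obtain by a direct application of Corollary \ref{202410262342}(b).

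First I would observe that the surjection $\pi_R : R \to R/\tau^{-1}(I)$ is a radicial ring morphism. This is immediate from the characterization (b) of Theorem \ref{202411171813}: if two morphisms $g_1, g_2 : R/\tau^{-1}(I) \to K$ into a field satisfy $g_1 \circ \pi_R = g_2 \circ \pi_R$, then since every element of $R/\tau^{-1}(I)$ has the form $\pi_R(r)$, we get $g_1 = g_2$. Hence any surjection is radicial, and in particular $\pi_R$ is.

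Next I would assemble the sequence of ring morphisms
$$R \overset{\pi_R}{\longrightarrow} \dfrac{R}{\tau^{-1}(I)} \overset{\bar{\tau}}{\longrightarrow} \dfrac{A}{I} \overset{\bar{g}}{\longrightarrow} \dfrac{B}{IB},$$
whose composite $\bar{\tau} \circ \pi_R = \pi \circ \tau$ is indeed the $R$-algebra structure map giving the relative weak normalization $\widetilde{(A/I)}_{B/IB,R}$. Since $\pi_R$ is radicial, Corollary \ref{202410262342}(b) applies to this sequence and delivers exactly the desired equality.

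To finish, I would combine this equality with the isomorphism from Proposition \ref{202410251529} to obtain the full chain stated in the proposition. There is no real obstacle: the technical work has been carried out in the earlier radicial/integral contraction results, and the only genuinely new observation is the elementary fact that a surjection is radicial, which reduces this proposition to one line of invocation of Corollary \ref{202410262342}(b) followed by Proposition \ref{202410251529}.
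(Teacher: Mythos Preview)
Your proposal is correct and follows essentially the same route as the paper: reduce the equality to Corollary \ref{202410262342}(b) by checking that $\pi_R$ is radicial, and cite Proposition \ref{202410251529} for the isomorphism. The only cosmetic difference is that the paper verifies radiciality of $\pi_R$ via criterion (c) of Theorem \ref{202411171813} (the canonical morphism $\gamma:\frac{R}{\tau^{-1}(I)}\ten_R\frac{R}{\tau^{-1}(I)}\to\frac{R}{\tau^{-1}(I)}$ is an isomorphism), whereas you use criterion (b) to observe that any surjection is radicial; both are immediate.
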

	
	\begin{proof}
		In this case the canonical morphism $\gamma:\dfrac{R}{\tau^{-1}(I)}\ten_R\dfrac{R}{\tau^{-1}(I)}\rightarrow \dfrac{R}{\tau^{-1}(I)}$ is an isomorphism, so its kernel is a nil ideal. Thus\footnote{See Prop. 3.7.1, p. 246 in \cite{EGA}} $\dfrac{R}{\tau^{-1}(I)}$ is a radicial $R$-algebra (with the canonical structure). From the sequence 
		\[\begin{tikzcd}
			R & {\dfrac{R}{\tau^{-1}(I)}} & {\dfrac{A}{I}} & {\dfrac{B}{IB}}
			\arrow[from=1-1, to=1-2]
			\arrow["{\bar{\tau}}", from=1-2, to=1-3]
			\arrow["{\bar{g}}", from=1-3, to=1-4]
		\end{tikzcd},\]
		
		\noindent Corollary \ref{202410262342} (b) ensures that $\widetilde{\left(\dfrac{A}{I}\right)}_{\frac{B}{IB},\frac{R}{\tau^{-1}(I)}}=\widetilde{\left(\dfrac{A}{I}\right)}_{\frac{B}{IB},R}$.
	\end{proof}
	
	Next, we emphasize that the contraction property is obtained without any \textit{a priori} requirement about the involved diagram to be Maranesi since the hypotheses will ensure it automatically.
	
	\begin{cor}[\textbf{Radicial + Surjective} contraction]\label{202411031653} 	Suppose that
		\[\begin{tikzcd}
			R & A & B \\
			{R'} & {A'} & {B'}
			\arrow["\tau", from=1-1, to=1-2]
			\arrow["g", from=1-2, to=1-3]
			\arrow["{\tau'}", from=2-1, to=2-2]
			\arrow["{g'}", from=2-2, to=2-3]
			\arrow["{f_R}"', from=1-1, to=2-1]
			\arrow["{f}", from=1-3, to=2-3]
			\arrow["{f_A}", from=1-2, to=2-2]
		\end{tikzcd}\eqno{(\star)}\]
		
		\noindent is a commutative diagram of ring morphisms. If $f_R$ and $f_A$ are radicial and $f$ is surjective then $$f^{-1}(\widetilde{A'}_{B',R'})=\widetilde{A}_{B,R}.$$
	\end{cor}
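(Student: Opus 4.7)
The plan is to combine two results that have already been prepared in the paper so that this corollary becomes almost immediate; no hypothesis on the diagram being Maranesi is needed because the conditions on $f_A$ and $f$ will force it.

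First, I would invoke Proposition \ref{202411031659}: since $f_A$ is radicial and $f$ is surjective, the commutative diagram $(\star)$ is automatically a Maranesi diagram. This discharges the Maranesi hypothesis that the contraction theorems require, so from now on we may treat $(\star)$ as such.

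Next, I would assemble the remaining hypotheses of the Integral Contraction Theorem \ref{202410241734}. Since $f$ is surjective, it is in particular integral. Moreover, since $f_R$ is radicial, Lemma \ref{202410312042} (applied to the $R'$-algebra $B'$) guarantees that $\ker p$ is a nil ideal of $B' \otimes_R B'$, where $p : B' \otimes_R B' \to B' \otimes_{R'} B'$ is the canonical morphism.

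With these pieces in place, Theorem \ref{202410241734} applies directly to the Maranesi diagram $(\star)$ and yields $f^{-1}(\widetilde{A'}_{B',R'}) = \widetilde{A}_{B,R}$, as desired. There is no real obstacle: the technical work has been front-loaded into Proposition \ref{202411031659} (which converts the algebraic conditions into the Maranesi property) and Lemma \ref{202410312042} (which converts radiciality of $f_R$ into nilpotence of $\ker p$), so the corollary is essentially a synthesis of those results with the integral contraction theorem.
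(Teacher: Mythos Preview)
Your proof is correct and essentially identical to the paper's: the paper also invokes Proposition~\ref{202411031659} to obtain a Maranesi diagram and then cites Theorem~\ref{202410262329}, whose proof is precisely Lemma~\ref{202410312042} plus Theorem~\ref{202410241734}. You have simply inlined that theorem rather than citing it, so the logical content is the same.
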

	
	\begin{proof}
		Since $f_A$ is radicial and $f$ is surjective then Proposition \ref{202411031659} ensures that $(\star)$ is a Maranesi diagram. Again, since $f$ is surjective then $f$ is integral, and once $f_R$ is radicial, the result now is a direct consequence of Theorem \ref{202410262329}.
	\end{proof}

	Now we want to obtain for the weak normalization a result analogous to Proposition \ref{202410262346} for the unramified condition. In \cite{L}, Lipman proved the following result.
	
	\begin{lema}\cite[Lemma 1.1]{L}\label{202410262229}
		Let $R$ be a ring and let $J_1,\hdots,J_n, I$ ideals of $R$ such that $J_1\cdots J_n$ is a nil ideal of $R$. If $x+J_i\in\overline{I\frac{R}{J_i}}, \forall i\in\{1,\hdots,n\}$, then $x\in\overline{I}$.
	\end{lema}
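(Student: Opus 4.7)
The plan is to unpack the integrality hypothesis modulo each $J_i$ to obtain monic polynomial equations, multiply them together to get a single monic polynomial whose value at $x$ lies in the product ideal $J_1\cdots J_n$, and then use the nil hypothesis to kill this value by raising to a suitable power.

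More concretely, fix $i\in\{1,\ldots,n\}$. The hypothesis $x+J_i\in\overline{I\cdot (R/J_i)}$ provides a monic integral dependence relation: there exist $m_i\in\bN$ and elements $a_{i,1},\ldots,a_{i,m_i}$ with $a_{i,k}\in I^k$ such that, setting
$$p_i(T):=T^{m_i}+a_{i,1}T^{m_i-1}+\cdots+a_{i,m_i-1}T+a_{i,m_i}\in R[T],$$
we have $p_i(x)\in J_i$. Define
$$P(T):=p_1(T)\,p_2(T)\cdots p_n(T)\in R[T],\qquad M:=m_1+\cdots+m_n.$$
Then $P(T)$ is monic of degree $M$, and a direct check (expanding the product and using $I^{k_1}\cdot I^{k_2}\subseteq I^{k_1+k_2}$) shows that the coefficient of $T^{M-k}$ in $P(T)$ lies in $I^k$ for every $k\in\{1,\ldots,M\}$. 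Moreover $P(x)=p_1(x)\cdots p_n(x)\in J_1\cdots J_n$, which is a nil ideal of $R$ by hypothesis.

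Hence there exists $N\in\bN$ with $P(x)^N=0$, i.e. $Q(x)=0$ where $Q(T):=P(T)^N$. Again $Q(T)$ is monic of degree $MN$, and the same coefficient-grading argument (the coefficient of $T^{MN-k}$ in $Q(T)$ is a sum of products of $N$ coefficients of $P$ whose degree defects add to $k$, so lies in $I^k$) shows that $Q(T)$ is a monic polynomial of integral-dependence type with coefficient of $T^{MN-k}$ in $I^k$. The equality $Q(x)=0$ is therefore an integral dependence relation of $x$ over $I$, proving $x\in\overline{I}$.

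The only step requiring any care is the bookkeeping for the coefficient grading of $P(T)$ and $P(T)^N$, but this is entirely formal once one writes out the product and uses that $I^a\cdot I^b\subseteq I^{a+b}$; the genuine content of the lemma is the simple observation that the product of the $p_i(x)$ lands in $J_1\cdots J_n$ and can therefore be annihilated by raising to a high power.
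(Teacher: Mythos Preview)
Your proof is correct. The paper does not actually supply its own proof of this lemma---it is quoted directly from \cite[Lemma~1.1]{L}---so there is no in-paper argument to compare against; your approach is the standard one and exactly parallels the proof the paper \emph{does} give for the radical analogue (Lemma~\ref{202410262230}), where the same multiply-then-raise-to-a-power trick is used with $x^{m_i}-x_i\in J_i$ in place of your $p_i(x)\in J_i$.
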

	
	For the weak normalization, we prove an analogous lemma for the radical of ideals.
	
	\begin{lema}\label{202410262230}
		Let $R$ be a ring and let $J_1,\hdots,J_n, I$ ideals of $R$ such that $J_1\cdots J_n$ is a nil ideal of $R$. If $x+J_i\in\sqrt{I\frac{R}{J_i}}, \forall i\in\{1,\hdots,n\}$, then $x\in\sqrt{I}$.
	\end{lema}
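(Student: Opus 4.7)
The plan is to unpack the hypothesis pointwise, gather the congruences into a single exponent, and then exploit the fact that $J_1\cdots J_n$ is nil to collapse an expanded product down into $\sqrt{I}$.

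First, I translate the condition $x+J_i\in\sqrt{I(R/J_i)}$ into an equation inside $R$. By definition, for each $i\in\{1,\ldots,n\}$ there exists $m_i\in\bN$ with $(x+J_i)^{m_i}\in I(R/J_i)$, which says $x^{m_i}\in I+J_i$. Setting $m:=\max\{m_1,\ldots,m_n\}$, we still have $x^m\in I+J_i$ for every $i$, so I can pick decompositions $x^m=c_i+d_i$ with $c_i\in I$ and $d_i\in J_i$ for each $i=1,\ldots,n$.

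Next, I form the product $\prod_{i=1}^n d_i$. On the one hand, this product lies in $J_1\cdots J_n$, which is nil by hypothesis, so it is nilpotent. On the other hand, since $d_i=x^m-c_i$, expanding
\[
\prod_{i=1}^n(x^m-c_i)=\sum_{S\sub\{1,\ldots,n\}}(-1)^{|S|}x^{m(n-|S|)}\prod_{i\in S}c_i
\]
yields $x^{nm}+y$, where $y\in I$ because every summand with $S\neq\emptyset$ contains a factor $c_i\in I$.

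Combining the two observations, $x^{nm}+y$ is nilpotent, hence belongs to $\sqrt{\ideal{0_R}}\sub\sqrt{I}$. Since $\sqrt{I}$ is an ideal and $y\in I\sub\sqrt{I}$, I conclude $x^{nm}=(x^{nm}+y)-y\in\sqrt{I}$, and therefore $x\in\sqrt{\sqrt{I}}=\sqrt{I}$, as desired. The only mildly delicate step is choosing the right common exponent $m$ and noticing that the cross terms in the expansion all land in $I$; once the product $\prod d_i$ is formed the nil-ideal hypothesis does the rest, so I do not expect any serious obstacle.
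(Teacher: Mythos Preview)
Your proof is correct and follows essentially the same strategy as the paper's: both arguments write $x^{m_i}$ as an element of $I$ plus an element of $J_i$, multiply the $J_i$-parts together to land in the nil ideal $J_1\cdots J_n$, and then expand the resulting product to see that a high power of $x$ lies in $I$ (or $\sqrt{I}$). The only cosmetic difference is that you first pass to a common exponent $m=\max m_i$, whereas the paper keeps the individual $m_i$ and obtains $x^{(m_1+\cdots+m_n)r}\in I$ directly.
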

	
	\begin{proof}
		By hypothesis, for each $i\in\{1,\hdots,n\}$ there exists $m_i\in\bN$ such that $(x+J_i)^{m_i}\in I\frac{R}{J_i}$, i.e., $x^{m_i}+J_i\in I\frac{R}{J_i}$, and we can write $$x^{m_i}+J_i=a_{i1}(\alpha_{i1}+J_i)+\cdots+a_{ir_i}(\alpha_{ir_i}+J_i),$$
		\noindent where $a_{i1},\hdots, a_{ir_i}\in I$ and $\alpha_{i1},\hdots, \alpha_{ir_i}\in R$. Thus, for each $i\in\{1,\hdots,n\}$, taking $$x_i:=a_{i1}\alpha_{i1}+\cdots+a_{ir_i}\alpha_{ir_i}\in I,$$ \noindent  we have $x^{m_i}-x_i\in J_i$. Hence, $(x^{m_1}-x_1)\cdots(x^{m_n}-x_n)\in J_1\cdots J_n\sub\sqrt{(0_R)}$, and this implies that there exists $r\in\bN$ such that $$((x^{m_1}-x_1)\cdots(x^{m_n}-x_n))^r=0_R.$$
		
		Since $x_1,\hdots,x_n\in I$ then the last equation implies that $x^{(m_1+\cdots+m_n)r}\in I$. Therefore, $x\in\sqrt{I}$.
	\end{proof}

	Now we state the main theorem involving diagrams with the first morphism of algebras being unramified. We emphasize that we do not need to require \textit{a priori} that the diagram is Maranesi.

	\begin{teo}[\textbf{Unramified + Radicial + Surjective} WN contraction]\label{202410271947}
		Let 
		\[\begin{tikzcd}
			R & A & B \\
			{R'} & {A'} & {B'}
			\arrow["\tau", from=1-1, to=1-2]
			\arrow["{f_R}"', from=1-1, to=2-1]
			\arrow["g", from=1-2, to=1-3]
			\arrow["{f_A}", from=1-2, to=2-2]
			\arrow["f", from=1-3, to=2-3]
			\arrow["{\tau'}"', from=2-1, to=2-2]
			\arrow["{g'}"', from=2-2, to=2-3]
		\end{tikzcd}\eqno{(\star)}\] be a commutative diagram of ring morphisms and suppose that: 
		
		\begin{itemize}
			\item $f_R$ is unramified;
			
			\item $f_A$ is radicial;
			
			\item $f$ is surjective. 
		\end{itemize}
		
		Then: $$f^{-1}(\widetilde{A'}_{B',R'})=\widetilde{A}_{B,R}.$$
	\end{teo}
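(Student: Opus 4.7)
The strategy is to adapt the proof of Corollary \ref{202411031653} (the Radicial + Surjective contraction), weakening the radicial hypothesis on $f_R$ to the unramified one. The trade-off is that, while a radicial $f_R$ forces $\ker p$ to be nil by Lemma \ref{202410312042}, the unramified hypothesis only yields an idempotent-type structure; we compensate by invoking Lemma \ref{202410262230}, which permits a decomposition argument whenever a finite product of ideals is nil.

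Setup. By Proposition \ref{202411031659}, the assumptions that $f_A$ is radicial and $f$ is surjective force $(\star)$ to be a Maranesi diagram, so $\sqrt{\ker\varphi'}=\sqrt{\ker\varphi(B'\ten_{R'}B')}$. The inclusion $\widetilde{A}_{B,R}\sub f^{-1}(\widetilde{A'}_{B',R'})$ follows immediately from Proposition \ref{202410240027}, so all the work is in the reverse inclusion. Fix $b\in f^{-1}(\widetilde{A'}_{B',R'})$; then $\bar{f}(\Delta(b))=\Delta'(f(b))\in\sqrt{\ker\varphi'}=\sqrt{\ker\varphi(B'\ten_{R'}B')}$, and we must show $\Delta(b)\in\sqrt{\ker\varphi}$.

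The unramified hypothesis enters as follows. The kernel of the canonical $\gamma:R'\ten_RR'\to R'$ is idempotent in the sense $(\ker\gamma)^2=\ker\gamma$, so under the standing finiteness it is generated by an idempotent $e\in R'\ten_RR'$. Pushing $e$ forward via $\sigma:=\lambda\ten_R\lambda$, with $\lambda:R'\to B'$ the structural map, yields an idempotent $\epsilon:=\sigma(e)\in B'\ten_RB'$ satisfying $\ker p=(\epsilon)$. Setting $J_1:=\ker p=(\epsilon)$ and $J_2:=(1-\epsilon)$ in $B'\ten_RB'$, we have $J_1J_2=0$ (nil, a fortiori), so Lemma \ref{202410262230} applies with $I$ equal to the ideal generated by $\tilde{f}(\ker\varphi)$, where $\tilde{f}:=f\ten_Rf$. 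Modulo $J_1$ the quotient is $B'\ten_{R'}B'$ and the required radical containment for $\tilde{f}(\Delta(b))$ is exactly the Maranesi condition applied to our hypothesis on $b$; modulo $J_2$ the image of $\ker p$ is the unit ideal, so the condition is vacuous. This should yield $\tilde{f}(\Delta(b))\in\sqrt{\tilde{f}(\ker\varphi)(B'\ten_RB')}$.

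The main obstacle I anticipate is the final transfer from $B'\ten_RB'$ back to $B\ten_RB$: the surjection $\tilde{f}=f\ten_Rf$ need not have nilpotent kernel, so Lemma \ref{202410251426} does not directly apply, and a membership $\tilde{f}(\Delta(b)^n)\in\tilde{f}(\ker\varphi)$ only implies $\Delta(b)^n\in\ker\varphi+\ker\tilde{f}$. To resolve this I expect to make a second application of Lemma \ref{202410262230} inside $B\ten_RB$, using a suitable finite collection of ideals built from $\ker f$ together with lifts of $\epsilon$ and $1-\epsilon$, and exploiting the specific form of $\Delta(b)$ as a diagonal element (rather than an arbitrary element of $B\ten_RB$) together with the commutativity of $(\star)$ to rule out the unwanted kernel contribution.
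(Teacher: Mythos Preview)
Your proposal has the right architecture but contains one genuine gap and one phantom obstacle.

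\textbf{The gap is in your $J_2$ step.} You claim that modulo $J_2=(1-\epsilon)$ ``the image of $\ker p$ is the unit ideal, so the condition is vacuous''. But the condition you need is that the \emph{image of $I=\tilde f(\ker\varphi)(B'\ten_RB')$} becomes the unit ideal, and there is no containment $\ker p\sub I$ in general (the structure map $R'\to B'$ need not factor through $A$). What is actually true, and what the paper proves, is that $\epsilon\in I$; equivalently, that $\psi(\epsilon)$ is nilpotent in $B'\ten_AB'$, where $\psi:B'\ten_RB'\to B'\ten_AB'$ is the canonical map. This is exactly where the hypothesis \emph{$f_A$ radicial} does real work beyond merely establishing the Maranesi property: one chases $\epsilon$ through the commutative square to $B'\ten_{A'}B'$, where it dies; the preimage in $B'\ten_AB'$ lies in $\ker p_A$, which is nil by Lemma \ref{202410312042}; and then $\ker(f\ten_Af)$ is nil by Lemma \ref{202410241420}(a). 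Once $\epsilon\in I$ is established, your $J_2$ step becomes correct. Without it, the argument fails.

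\textbf{Your ``main obstacle'' is not an obstacle.} You worry that $\ker\tilde f=\ker(f\ten_Rf)$ need not be nil, but it is: $f$ is surjective, hence integral, so Lemma \ref{202410241420}(a) gives $\ker(f\ten_Rf)\sub\sqrt{\ideal{0}}$. Then Lemma \ref{202410251426} applies directly to $\tilde f$ and yields $\tilde f^{-1}(\sqrt{I})=\sqrt{\ker\varphi}$, finishing the proof. No second decomposition is needed, and the speculative plan involving ``lifts of $\epsilon$ and $1-\epsilon$'' and ``the specific form of $\Delta(b)$'' is unnecessary.

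\textbf{Comparison with the paper.} The paper avoids your two-stage transfer by lifting the idempotent at the outset: it chooses $e_0\in B\ten_RB$ with $\tilde f(e_0)=\epsilon$, sets $J_1=\ker\tilde f+\ideal{e_0}$ and $J_2=\ideal{1-e_0}$ \emph{inside $B\ten_RB$}, shows $e_0^t\in\ker\varphi$ via the same $f_A$-radicial argument, checks $J_1J_2\sub\ker\tilde f\sub\sqrt{\ideal{0}}$, and applies Lemma \ref{202410262230} once, directly in $B\ten_RB$. Your route (decompose in $B'\ten_RB'$, then pull back via Lemma \ref{202410251426}) is a legitimate alternative and arguably cleaner, provided you repair the $J_2$ justification.
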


	\begin{proof}
		
		Firstly, we recall the hypotheses that allow us to apply Proposition \ref{202411031659} to conclude that $(\star)$ is a Maranesi diagram. Let $\gamma: R'\ten_RR'$ be the canonical morphism. Since $f_R$ is unramified then there exists $e'\in\ker\gamma$ such that $e'^2=e'$ and $\ker\gamma=\ideal{e'}$. Consider the standard commutative diagram

		\[\begin{tikzcd}
			R & A & B & {B\otimes_RB} & {B\otimes_AB} \\
			{R'} & {A'} & {B'} & {B'\otimes_RB'} & {B'\otimes_AB'} \\
			{R'} & {A'} & {B'} & {B'\otimes_{R'}B'} & {B'\otimes_{A'}B}
			\arrow["\tau", from=1-1, to=1-2]
			\arrow["{f_R}"', from=1-1, to=2-1]
			\arrow["g", from=1-2, to=1-3]
			\arrow["{f_A}"', from=1-2, to=2-2]
			\arrow["\Delta", from=1-3, to=1-4]
			\arrow["f"', from=1-3, to=2-3]
			\arrow["\varphi", from=1-4, to=1-5]
			\arrow["{f\otimes_Rf}"', from=1-4, to=2-4]
			\arrow["{\overline{\scriptsize{f}}}"{pos=0.3}, shift left, curve={height=-30pt}, dashed, from=1-4, to=3-4]
			\arrow["{f\otimes_Af}", from=1-5, to=2-5]
			\arrow["{\tau'}"', from=2-1, to=2-2]
			\arrow["{\textrm{id}_{R'}}"', from=2-1, to=3-1]
			\arrow["{g'}"', from=2-2, to=2-3]
			\arrow["{\textrm{id}_{A'}}"', from=2-2, to=3-2]
			\arrow["{\tilde{\Delta}}"', from=2-3, to=2-4]
			\arrow["{\textrm{id}_{B'}}"', from=2-3, to=3-3]
			\arrow[from=2-4, to=2-5]
			\arrow["p"', from=2-4, to=3-4]
			\arrow["{p_A}", from=2-5, to=3-5]
			\arrow["{\tau'}"', from=3-1, to=3-2]
			\arrow["{g'}"', from=3-2, to=3-3]
			\arrow["{\Delta'}"', from=3-3, to=3-4]
			\arrow["{\varphi'}"', from=3-4, to=3-5]
		\end{tikzcd}\eqno{(\star)}\]
		
		We know that $\ker p=\ideal{\mbox{Im}(\tilde{\Delta}\circ g'\circ \tau')}$. If $\sigma:R'\ten_RR'\rightarrow B'\ten_R B'$ is given by $\sigma:=(g'\ten_Rg')\circ(\tau'\ten_R\tau')$, it is easy to see that $$\tilde{\Delta}\circ g'\circ\tau'(r')=\sigma(r'\ten_R1_{R'}-1_{R'}\ten_Rr'), \forall r'\in R'.$$
		
		Thus, $\ker p=\ideal{\sigma(\ker\gamma)}=\ideal{e}$, where $e:=\sigma(e')$. Notice that $e^2=e$. Since $f\ten_Rf$ is surjective then there exists $e_0\in B\ten_RB$ such that $e=(f\ten_Rf)(e_0)$. Since $e\in\ker p$ then $p((f\ten_Rf)(e_0))=p(e)=0_{B'\ten_{R'}B'}$, i.e.,  $\overline{f}(e_0)=0_{B'\ten_{R'}B'}\implies e_0\in\ker\overline{f}$.
		
		We already have $\widetilde{A}_{B,R}\sub f^{-1}(\widetilde{A'}_{B',R'})$. Conversely, let $x\in f^{-1}(\widetilde{A'}_{B',R'})$. Thus, $f(x)\in \widetilde{A'}_{B',R'}$, i.e., $\overline{f}(\Delta(x))=\Delta'(f(x))\in\sqrt{\ker\varphi'}$. As we have seen before, $(\star)$ is a Maranesi diagram, and this implies $\sqrt{\ker\varphi'}=\sqrt{\ker\varphi(B'\ten_{R'}B')}$. Further, since $f\ten_Rf$ and $p$ are surjective then $\overline{f}=p\circ(f\ten_Rf)$ is surjective, and consequently, $\ker\varphi(B'\ten_{R'}B')=\overline{f}(\ker\varphi)$. So, $\overline{f}(\Delta(x))\in\sqrt{\overline{f}(\ker\varphi)}$, which implies there exist $s\in\bN$ and $d\in\ker\varphi$ such that: $$\overline{f}((\Delta(x))^s)=\overline{f}(d)\implies (f\ten_Rf)((\Delta(x))^s-d)\in\ker p=\ideal{(f\ten_Rf)(e_0)},$$
		
		\noindent and using again the surjectivity of $f\ten_Rf$, we can find $w\in B\ten_RB$ such that $$(f\ten_Rf)((\Delta(x))^s-d)=(f\ten_Rf)(we_0)\implies \Delta(x))^s-d-we_0\in\ker(f\ten_Rf).$$
		
		If we take the ideal of $B\ten_RB$ given by $J_1:=\ker(f\ten_Rf)+\ideal{e_0}$, one has that $\Delta(x))^s-d\in J_1$, and consequently, $$\Delta(x)+J_1\in\sqrt{\ker\varphi\dfrac{B\ten_RB}{J_1}}. \eqno{(1)}.$$
		
		We have seen that $\overline{f}(e_0)=0_{B'\ten_{R'}B'}$, and this implies $\varphi'(\overline{f}(e_0))=0_{B'\ten_{A'}B'}$. Since $(\star)$ is a commutative diagram then $p_A((f\ten_Af)(\varphi(e_0)))=0_{B'\ten_{A'}B'}$. Since $f_A$ is radicial, Lemma \ref{202410312042} guarantees that $\ker p_A\sub\sqrt{\ideal{0_{B'\ten_AB'}}}$. Thus, $$(f\ten_Af)(\varphi(e_0))\in\ker p_A\sub \sqrt{\ideal{0_{B'\ten_AB'}}}\implies \varphi(e_0)\in\sqrt{\ker(f\ten_Af)}.$$
		
		Since $f$ is an integral morphism (once it is surjective), Lemma \ref{202410241420} (a) guarantees that $$\ker(f\ten_Rf)\sub\sqrt{\ideal{0_{B\ten_{R}B}}}\mbox{ and }\ker(f\ten_Af)\sub\sqrt{\ideal{0_{B\ten_{A}B}}}.$$
		
		Hence, $\varphi(e_0)\in\sqrt{\ideal{0_{B\ten_AB}}}$ and, consequently, there exists $t\in\bN$ such that $\varphi(e_0^t)=0_{B\ten_AB}$, i.e., $e_0^t\in\ker\varphi$. Denote $\mathbf{1}:=1_{B\ten_RB}$. Consider the ideal of $B\ten_RB$ given by $J_2:=\ideal{\mathbf{1}-e_0}$, and take $c:=\Delta(x)e_0$. Observe that $c^t=(\Delta(x))^t\underbrace{e_0^t}_{\in\ker\varphi}\in\ker\varphi$. Thus, $$(\Delta(x))^t-c^t=(\Delta(x))^t(\mathbf{1}-e_0^t)=(\Delta(x))^t(\mathbf{1}-e_0)(\mathbf{1}+e_0+\cdots+e_0^{t-1})\in J_2.$$
		
		Hence, $(\Delta(x))^t+J_2=\underbrace{c^t}_{\in\ker\varphi}+J_2$, i.e., $$\Delta(x)+J_2\in\sqrt{\ker\varphi\dfrac{B\ten_RB}{J_2}}. \eqno{(2)}$$
		
		Moreover, since $e^2=e$ and $e=(f\ten_Rf)(e_0)$ then $e_0-e_0^2\in\ker(f\ten_Rf)$. Thus, $$J_1J_2=(\ker(f\ten_Rf)+\ideal{e_0})\ideal{\mathbf{1}-e_0}=\ker(f\ten_Rf)\ideal{\mathbf{1}-e_0}+\ideal{e_0}\ideal{\mathbf{1}-e_0}$$$$\sub \ker(f\ten_Rf)+\underbrace{\ideal{e_0-e_0^2}}_{\sub \ker(f\ten_Rf)}\sub \ker(f\ten_Rf)\sub \sqrt{\ideal{0_{B\ten_RB}}}.$$
		
		Hence, $J_1J_2$ is a nil ideal of $B\ten_RB$, and using (1), (2) and Lemma \ref{202410262230}, we can conclude that $\Delta(x)\in \sqrt{\ker\varphi}$. Therefore, $x\in \widetilde{A}_{B,R}$.
	\end{proof}

	As a consequence, we obtain a similar result to Proposition \ref{202410262346} for the weak normalization in the context of unramified algebras.
	
	\begin{cor}
		Consider the sequence of ring morphisms 
		\[\begin{tikzcd}
			R & {R'} & A & B
			\arrow["{f_R}", from=1-1, to=1-2]
			\arrow["\tau"', curve={height=24pt}, from=1-1, to=1-3]
			\arrow["{\tau'}", from=1-2, to=1-3]
			\arrow["g", from=1-3, to=1-4]
		\end{tikzcd}\]
		
		If $f_R$ is unramified then $\widetilde{A}_{B,R}=\widetilde{A}_{B,R'}$.
	\end{cor}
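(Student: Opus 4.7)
The plan is to apply Theorem \ref{202410271947} directly, using identity maps on $A$ and $B$. The sequence $R \overset{f_R}{\to} R' \overset{\tau'}{\to} A \overset{g}{\to} B$ endows $A$ with both an $R$-algebra structure (via $\tau := \tau' \circ f_R$) and an $R'$-algebra structure, and likewise for $B$. I would assemble these into the square
\[\begin{tikzcd}
R & A & B \\
{R'} & A & B
\arrow["\tau", from=1-1, to=1-2]
\arrow["{f_R}"', from=1-1, to=2-1]
\arrow["g", from=1-2, to=1-3]
\arrow["{\textrm{id}_A}", from=1-2, to=2-2]
\arrow["{\textrm{id}_B}", from=1-3, to=2-3]
\arrow["{\tau'}"', from=2-1, to=2-2]
\arrow["g"', from=2-2, to=2-3]
\end{tikzcd}\]
which commutes by construction since $\tau = \tau' \circ f_R$.

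Next I would verify the three hypotheses of Theorem \ref{202410271947}: (i) $f_R$ is unramified, which is given; (ii) $f_A = \textrm{id}_A$ is radicial, since the identity map has trivial kernel and induces the identity on spectra and residue fields (so conditions (a) and (b) of the radicial definition are immediate); (iii) $f = \textrm{id}_B$ is obviously surjective. With these three conditions in place, Theorem \ref{202410271947} yields
\[\textrm{id}_B^{-1}\bigl(\widetilde{A}_{B,R'}\bigr) = \widetilde{A}_{B,R},\]
and since $\textrm{id}_B^{-1}$ acts as the identity on subsets of $B$, this collapses to the desired equality $\widetilde{A}_{B,R} = \widetilde{A}_{B,R'}$.

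Since this is a one-shot reduction to Theorem \ref{202410271947}, there is essentially no obstacle to confront at the level of the corollary itself; all the real work is absorbed in the (already established) unramified--radicial--surjective contraction theorem. The only conceptual point worth emphasizing is that taking $f_A$ and $f$ to be identities trivializes two of the three hypotheses, so the content of the corollary is entirely the unramified condition on $f_R$, which is precisely what allowed Theorem \ref{202410271947} to exploit the idempotent generator of $\ker \gamma$ (where $\gamma \colon R' \otimes_R R' \to R'$) and thereby to control $\ker p$ via the localization/splitting argument carried out there.
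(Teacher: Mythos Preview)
Your proof is correct and follows essentially the same approach as the paper's: both apply Theorem \ref{202410271947} to the commutative diagram with $f_A=\textrm{id}_A$ and $f=\textrm{id}_B$, noting that $f_R$ is unramified, $\textrm{id}_A$ is radicial, and $\textrm{id}_B$ is surjective.
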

	
	\begin{proof}
		
		Notice that 
		$\begin{tikzcd}
			R & A & B \\
			{R'} & A & B
			\arrow["\tau", from=1-1, to=1-2]
			\arrow["{f_R}"', from=1-1, to=2-1]
			\arrow["g", from=1-2, to=1-3]
			\arrow["{\textrm{id}_A}", from=1-2, to=2-2]
			\arrow["{\textrm{id}_B}", from=1-3, to=2-3]
			\arrow["{\tau'}"', from=2-1, to=2-2]
			\arrow["g"', from=2-2, to=2-3]
		\end{tikzcd}$ is a Maranesi diagram, where $f_R$ is unramified, $\id_A$ is radicial and $\id_B$ is surjective.\end{proof}
	
	\begin{lema}\label{202410280018}
		Let $h: S\rightarrow T$ be an integral morphism and let $I$ be an ideal of $S$. If $x\in h^{-1}(\overline{IT})$ then there exist $a_i\in I^i, i\in\{1,\hdots,n\}$ such that $$a_n+a_{n-1}x+\cdots+a_1x^{n-1}+x^n\in\ker h.$$
	\end{lema}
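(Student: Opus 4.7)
The plan is to reduce the statement to the classical fact that integral closures of ideals contract correctly under integral injections, and then to lift the resulting monic equation back to $S$. I first factor $h$ as $S\twoheadrightarrow h(S)\hookrightarrow T$, set $R:=h(S)$ and $J:=h(I)$, and observe that $R\hookrightarrow T$ is an integral injection and $IT=JT$. The hypothesis becomes $h(x)\in\overline{JT}$ (integral closure of the ideal $JT$ in $T$), and the goal is to show $h(x)\in\overline{J}$ (integral closure of $J$ in $R$). Once this is done, there exist $\bar a_i\in J^i=h(I^i)$ with $h(x)^n+\bar a_1 h(x)^{n-1}+\cdots+\bar a_n=0$ in $R$; choosing preimages $a_i\in I^i$ with $h(a_i)=\bar a_i$ yields $h(x^n+a_1 x^{n-1}+\cdots+a_n)=0$ in $T$, which is exactly $a_n+a_{n-1}x+\cdots+a_1 x^{n-1}+x^n\in\ker h$.

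The heart of the proof is the contraction formula $\overline{JT}\cap R=\overline{J}$ for the integral injection $R\hookrightarrow T$, which I will establish via a Rees algebra argument. Recall the classical fact that $y\in\overline{J}$ iff $yt$ is integral over the Rees algebra $R[Jt]=\bigoplus_{i\geq 0}J^i t^i$ inside $R[t]$, and similarly $y\in\overline{JT}$ iff $yt$ is integral over $T[Jt]=\bigoplus_{i\geq 0}(JT)^i t^i$ inside $T[t]$. Because $T$ is integral over $R$, every element of $T$ is integral over $R[Jt]$, and since $T[Jt]$ is generated as an $R[Jt]$-algebra by $T$, it follows that $T[Jt]$ is integral over $R[Jt]$. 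Applying this with $y=h(x)\in R$: by hypothesis $h(x)t$ is integral over $T[Jt]$, hence integral over $R[Jt]$ by transitivity of integrality; since $h(x)t\in R[t]$ and the resulting integral equation has coefficients in $R[Jt]\subseteq R[t]$, it is a relation entirely in $R[t]$, giving $h(x)\in\overline{J}$.

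I expect the main obstacle to be the verification of the Rees algebra contraction formula $\overline{JT}\cap R=\overline{J}$, and in particular the careful check that the integrality of $T$ over $R$ extends to the integrality of $T[Jt]$ over $R[Jt]$. Once this is in place, the rest of the argument is purely formal, and the lifting step in the first paragraph closes the proof. Alternatively, one could appeal to this contraction formula directly as a standard result on integral closures of ideals under integral extensions, which would streamline the write-up.
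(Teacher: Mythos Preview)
Your argument is correct: factoring $h$ through its image and then invoking the contraction $\overline{JT}\cap R=\overline{J}$ for the integral injection $R\hookrightarrow T$ via Rees algebras is the standard route, and your lifting step back to $S$ is clean. The paper itself gives no independent proof here---it simply points to the proof of \cite[Lemma~3.2]{SR} (recorded above as Lemma~\ref{202410241420})---so your write-up supplies exactly the details the paper defers to that reference, and there is no meaningful methodological difference to discuss.
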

	
	\begin{proof}
		It is a direct consequence of the proof of Lemma \ref{202410241420} (see the proof in \cite[Lemma 3.2]{SR}).
	\end{proof}
	
	Next, we generalize the \textit{unramified part} of Proposition \ref{202410262346} for Lipman diagrams.\footnote{Here, we emphasize that, unlike in the case of weak normalization, where the hypotheses a priori ensured that the diagram was Maranesi, in the case of Lipschitz saturation, we do not know if this is true or not. Therefore, at this point, we need to require a priori that the diagram is Lipman.}
	
	\begin{teo}[\textbf{Unramified + Radicial + Surjective} Lipschitz contraction]\label{202410280007}
		Let 
		\[\begin{tikzcd}
			R & A & B \\
			{R'} & {A'} & {B'}
			\arrow["\tau", from=1-1, to=1-2]
			\arrow["{f_R}"', from=1-1, to=2-1]
			\arrow["g", from=1-2, to=1-3]
			\arrow["{f_A}", from=1-2, to=2-2]
			\arrow["f", from=1-3, to=2-3]
			\arrow["{\tau'}"', from=2-1, to=2-2]
			\arrow["{g'}"', from=2-2, to=2-3]
		\end{tikzcd}\] be a Lipman diagram and suppose that: 
		
		\begin{itemize}
			\item $f_R$ is unramified;
			
			\item $f_A$ is radicial;
			
			\item $f$ is surjective.
		\end{itemize}
		
		Then: $$f^{-1}((A')^*_{B',R'})=A^*_{B,R}.$$
	\end{teo}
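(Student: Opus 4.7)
The plan is to adapt the proof of Theorem \ref{202410271947}, replacing the radical throughout by the integral closure and invoking Lemma \ref{202410262229} in place of Lemma \ref{202410262230}. The inclusion $A^*_{B,R}\sub f^{-1}((A')^*_{B',R'})$ is Proposition \ref{prop3}. For the reverse inclusion, I would first reproduce the unramified setup from the proof of Theorem \ref{202410271947}: since $f_R$ is unramified, one produces an idempotent $e\in\ker p$ with $\ker p=\ideal{e}$; lifts it via the surjection $f\ten_Rf$ to $e_0\in B\ten_RB$ with $(f\ten_Rf)(e_0)=e$ and $e_0\in\ker\bar{f}$; and records that $e_0^t\in\ker\varphi$ for some $t\in\bN$, which is the consequence of $f_A$ being radicial and $f$ being integral already extracted in that proof.

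Next, for $x\in f^{-1}((A')^*_{B',R'})$, one has $\bar{f}(\Delta(x))=\Delta'(f(x))\in\overline{\ker\varphi'}$, and because the diagram is Lipman with $\bar{f}$ surjective, $\overline{\ker\varphi'}=\overline{\bar{f}(\ker\varphi)}$. Since $\bar{f}$ is integral (being surjective), Lemma \ref{202410280018} applied with $h=\bar{f}$ and $I=\ker\varphi$ furnishes elements $a_i\in(\ker\varphi)^i$ and $n\in\bN$ such that
\[
(\Delta(x))^n+a_1(\Delta(x))^{n-1}+\cdots+a_n\in\ker\bar{f}.
\]
Arguing as in the proof of Theorem \ref{202410271947}, one checks $\ker\bar{f}\sub J_1:=\ker(f\ten_Rf)+\ideal{e_0}$, so reducing the displayed relation modulo $J_1$ exhibits $\Delta(x)+J_1$ as integral over the image of $\ker\varphi$ in $B\ten_RB/J_1$; that is, $\Delta(x)+J_1\in\overline{\ker\varphi\cdot(B\ten_RB/J_1)}$.

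For $J_2:=\ideal{\mathbf{1}-e_0}$, the factorization $\mathbf{1}-e_0^t=(\mathbf{1}-e_0)(\mathbf{1}+e_0+\cdots+e_0^{t-1})\in J_2$ combined with $e_0^t\in\ker\varphi$ gives $\Delta(x)=\Delta(x)e_0^t+\Delta(x)(\mathbf{1}-e_0^t)\in\ker\varphi+J_2$, so $\Delta(x)+J_2$ lies in $\ker\varphi\cdot(B\ten_RB/J_2)$ and therefore trivially in its integral closure. Moreover $e^2=e$ forces $e_0-e_0^2\in\ker(f\ten_Rf)$, whence $J_1J_2\sub\ker(f\ten_Rf)+\ideal{e_0-e_0^2}\sub\ker(f\ten_Rf)$, which is a nil ideal by Lemma \ref{202410241420}(a). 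Lemma \ref{202410262229} applied to $J_1,J_2$ and $I=\ker\varphi$ in $B\ten_RB$ then yields $\Delta(x)\in\overline{\ker\varphi}$, i.e., $x\in A^*_{B,R}$.

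The main obstacle is the first reduction: the weak-normalization argument extracts a single power of $\Delta(x)$ to land inside $\ker\varphi$ modulo $J_1$, whereas for Lipschitz saturation one must produce a full \emph{integral} equation over $\ker\varphi$ modulo $J_1$. This is exactly where Lemma \ref{202410280018} enters, and also why the Lipman-diagram hypothesis (not merely the Maranesi property, which would suffice in the weak-normalization version) is required as input. Once this integrality relation is in hand, the remainder is a near word-for-word transcription of the proof of Theorem \ref{202410271947} with $\sqrt{\cdot}$ replaced by $\overline{\cdot}$.
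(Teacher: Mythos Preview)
Your proposal is correct and follows essentially the same route as the paper's proof: both lift the idempotent generator of $\ker p$ to $e_0$, invoke Lemma \ref{202410280018} (with $h=\bar{f}$) to obtain an integral relation for $\Delta(x)$ modulo $\ker\bar{f}\sub J_1=\ker(f\ten_Rf)+\ideal{e_0}$, use the radicial/integral hypotheses to force $e_0^t\in\ker\varphi$ and hence $\Delta(x)+J_2\in\ker\varphi\cdot(B\ten_RB/J_2)$ for $J_2=\ideal{\mathbf{1}-e_0}$, verify $J_1J_2$ is nil, and conclude via Lemma \ref{202410262229}. Your phrasing of the $J_1$ step as the containment $\ker\bar{f}\sub J_1$ is a slight streamlining of the paper's explicit element-chase, but the argument is the same.
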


	\begin{proof}
		
		Let $\gamma: R'\ten_RR'$ be the canonical morphism. Since $f_R$ is unramified then there exists $e'\in\ker\gamma$ such that $e'^2=e'$ and $\ker\gamma=\ideal{e'}$. Consider the standard commutative diagram

		\[\begin{tikzcd}
			R & A & B & {B\otimes_RB} & {B\otimes_AB} \\
			{R'} & {A'} & {B'} & {B'\otimes_RB'} & {B'\otimes_AB'} \\
			{R'} & {A'} & {B'} & {B'\otimes_{R'}B'} & {B'\otimes_{A'}B}
			\arrow["\tau", from=1-1, to=1-2]
			\arrow["{f_R}"', from=1-1, to=2-1]
			\arrow["g", from=1-2, to=1-3]
			\arrow["{f_A}"', from=1-2, to=2-2]
			\arrow["\Delta", from=1-3, to=1-4]
			\arrow["f"', from=1-3, to=2-3]
			\arrow["\varphi", from=1-4, to=1-5]
			\arrow["{f\otimes_Rf}"', from=1-4, to=2-4]
			\arrow["{\overline{\scriptsize{f}}}"{pos=0.3}, shift left, curve={height=-30pt}, dashed, from=1-4, to=3-4]
			\arrow["{f\otimes_Af}", from=1-5, to=2-5]
			\arrow["{\tau'}"', from=2-1, to=2-2]
			\arrow["{\textrm{id}_{R'}}"', from=2-1, to=3-1]
			\arrow["{g'}"', from=2-2, to=2-3]
			\arrow["{\textrm{id}_{A'}}"', from=2-2, to=3-2]
			\arrow["{\tilde{\Delta}}"', from=2-3, to=2-4]
			\arrow["{\textrm{id}_{B'}}"', from=2-3, to=3-3]
			\arrow[from=2-4, to=2-5]
			\arrow["p"', from=2-4, to=3-4]
			\arrow["{p_A}", from=2-5, to=3-5]
			\arrow["{\tau'}"', from=3-1, to=3-2]
			\arrow["{g'}"', from=3-2, to=3-3]
			\arrow["{\Delta'}"', from=3-3, to=3-4]
			\arrow["{\varphi'}"', from=3-4, to=3-5]
		\end{tikzcd}\eqno{(\star)}\]
		
		We know that $\ker p=\ideal{\mbox{Im}(\tilde{\Delta}\circ g'\circ \tau')}$. If $\sigma:R'\ten_RR'\rightarrow B'\ten_R B'$ is given by $\sigma:=(g'\ten_Rg')\circ(\tau'\ten_R\tau')$, it is easy to see that $$\tilde{\Delta}\circ g'\circ\tau'(r')=\sigma(r'\ten_R1_{R'}-1_{R'}\ten_Rr'), \forall r'\in R'.$$
		
		Thus, $\ker p=\ideal{\sigma(\ker\gamma)}=\ideal{e}$, where $e:=\sigma(e')$. Notice that $e^2=e$. Since $\ker p\sub\mbox{Im}(f\ten_Rf)$ then there exists $e_0\in B\ten_RB$ such that $e=(f\ten_Rf)(e_0)$. Since $e\in\ker p$ then $p((f\ten_Rf)(e_0))=p(e)=0_{B'\ten_{R'}B'}$, i.e.,  $\overline{f}(e_0)=0_{B'\ten_{R'}B'}\implies e_0\in\ker\overline{f}$.
		
		We already have $A^*_{B,R}\sub f^{-1}((A')^*_{B',R'})$. Conversely, let $x\in f^{-1}((A')^*_{B',R'})$. Thus, $f(x)\in (A')^*_{B',R'}$, i.e., $\overline{f}(\Delta(x))=\Delta'(f(x))\in\overline{\ker\varphi'}$. Once we are working with a Lipman diagram, we have $\overline{\ker\varphi'}=\overline{\ker\varphi(B'\ten_{R'}B')}$. Since $f\ten_Rf$ and $p$ are surjective then $\overline{f}=p\circ(f\ten_Rf)$ is surjective, hence, integral. Once $\overline{f}(\Delta(x))\in\overline{\ker\varphi(B'\ten_{R'}B')}$ then $\Delta(x)\in\overline{f}^{-1}(\overline{\ker\varphi(B'\ten_{R'}B')})$ and Lemma \ref{202410280018} ensures there exist $a_i\in(\ker\varphi)^i, i\in\{1,\hdots, n\}$ such that $$a_n+a_{n-1}\Delta(x)+\cdots+a_1(\Delta(x))^{n-1}+(\Delta(x))^n\in\ker\overline{f}.$$
		
		If we denote $d:=-(a_n+a_{n-1}\Delta(x)+\cdots+a_1(\Delta(x))^{n-1})$, the last equation implies that $$(\Delta(x))^n-d\in\ker\overline{f}\implies (f\ten_Rf)((\Delta(x))^n-d)\in\ker p=\ideal{f\ten_Rf(e_0)},$$
		
		\noindent and using that $f\ten_Rf$ is surjective, there exists $w\in B\ten_RB$ such that $$(f\ten_Rf)((\Delta(x))^n-d)=(f\ten_Rf)(we_0),$$\noindent  and consequently, $(\Delta(x))^n-d-we_0\in\ker(f\ten_Rf)$. If we take $J_1:=\ker(f\ten_Rf)+\ideal{e_0}$, one has $(\Delta(x))^n-d\in J_1$. Hence, $$0_{\frac{B\ten_RB}{J_1}}=(-d+J_1)+((\Delta(x))^n+J_1)$$$$=(a_n+J_1)+(a_{n-1}+J_1)(\Delta(x)+J_1)+\cdots+(a_1+J_1)(\Delta(x)+J_1)^{n-1}+(\Delta(x)+J_1)^n,$$
		
		\noindent where $a_i+J_1\in\left(\ker\varphi\dfrac{B\ten_RB}{J_1}\right)^i, \forall i\in\{1,\hdots,n\}$. Thus, $$\Delta(x)+J_1\in\overline{\left(\ker\varphi\dfrac{B\ten_RB}{J_1}\right)}. \eqno{(1)}$$
		
		We have seen that $\overline{f}(e_0)=0_{B'\ten_{R'}B'}$, and this implies $\varphi'(\overline{f}(e_0))=0_{B'\ten_{A'}B'}$. Since $(\star)$ is a commutative diagram then $p_A((f\ten_Af)(\varphi(e_0)))=0_{B'\ten_{A'}B'}$. Since $f_A$ is radicial, Lemma \ref{202410312042} guarantees that $\ker p_A\sub\sqrt{\ideal{0_{B'\ten_AB'}}}$. Thus, $$(f\ten_Af)(\varphi(e_0))\in\ker p_A\sub \sqrt{\ideal{0_{B'\ten_AB'}}}\implies \varphi(e_0)\in\sqrt{\ker(f\ten_Af)}.$$
		
		Since $f$ is an integral morphism (once it is surjective), Lemma \ref{202410241420} (a) guarantees that $$\ker(f\ten_Rf)\sub\sqrt{\ideal{0_{B\ten_{R}B}}}\mbox{ and }\ker(f\ten_Af)\sub\sqrt{\ideal{0_{B\ten_{A}B}}}.$$
		
		Hence, $\varphi(e_0)\in\sqrt{\ideal{0_{B\ten_AB}}}$ and, consequently, there exists $t\in\bN$ such that $\varphi(e_0^t)=0_{B\ten_AB}$, i.e., $e_0^t\in\ker\varphi$. Thus, $c:=\Delta(x)e_0^t\in\ker\varphi$. Denote $\mathbf{1}:=1_{B\ten_RB}$. If $J_2:=\ideal{\mathbf{1}-e_0}$, we have $$\Delta(x)-c=\Delta(x)(\mathbf{1}-e_0^t)=\Delta(x)(\mathbf{1}-e_0)(\mathbf{1}+e_0+\cdots+e_0^{t-1})\in J_2.$$ Consequently, $$\Delta(x)+J_2=\underbrace{c}_{\in\ker\varphi}+J_2\in \ker\varphi\dfrac{B\ten_RB}{J_2}\sub\overline{\left(\ker\varphi\dfrac{B\ten_RB}{J_2}\right)}$$ $$\implies \Delta(x)+J_2\in \overline{\left(\ker\varphi\dfrac{B\ten_RB}{J_2}\right)}. \eqno{(2)}$$
		
		Moreover, since $e^2=e$ and $e=(f\ten_Rf)(e_0)$ then $e_0-e_0^2\in\ker(f\ten_Rf)$. Thus, $$J_1J_2=(\ker(f\ten_Rf)+\ideal{e_0})\ideal{\mathbf{1}-e_0}=\ker(f\ten_Rf)\ideal{\mathbf{1}-e_0}+\ideal{e_0}\ideal{\mathbf{1}-e_0}$$$$\sub \ker(f\ten_Rf)+\underbrace{\ideal{e_0-e_0^2}}_{\sub \ker(f\ten_Rf)}\sub \ker(f\ten_Rf)\sub \sqrt{\ideal{0_{B\ten_RB}}}.$$
		
		Hence, $J_1J_2$ is a nil ideal of $B\ten_RB$, and using (1), (2) and Lemma \ref{202410262229}, we can conclude that $\Delta(x)\in \overline{\ker\varphi}$. Therefore, $x\in A^*_{B,R}$.
	\end{proof}
	
	As a consequence, we obtain the \textit{unramified part} of Proposition \ref{202410262346} as a corollary of our theorem.
	
	\begin{cor}
		Consider the sequence of ring morphisms 
		\[\begin{tikzcd}
			R & {R'} & A & B
			\arrow["{f_R}", from=1-1, to=1-2]
			\arrow["\tau"', curve={height=24pt}, from=1-1, to=1-3]
			\arrow["{\tau'}", from=1-2, to=1-3]
			\arrow["g", from=1-3, to=1-4]
		\end{tikzcd}\]
		
		If $f_R$ is unramified then $A^*_{B,R}=A^*_{B,R'}$.
	\end{cor}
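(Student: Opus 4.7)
The plan is to apply Theorem \ref{202410280007} to the standard commutative square built from the given sequence, in complete analogy with the weak-normalization corollary that immediately follows Theorem \ref{202410271947}.

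First, I would form the diagram
\[\begin{tikzcd}
R & A & B \\
{R'} & A & B
\arrow["\tau", from=1-1, to=1-2]
\arrow["{f_R}"', from=1-1, to=2-1]
\arrow["g", from=1-2, to=1-3]
\arrow["{\textrm{id}_A}", from=1-2, to=2-2]
\arrow["{\textrm{id}_B}", from=1-3, to=2-3]
\arrow["{\tau'}"', from=2-1, to=2-2]
\arrow["g"', from=2-2, to=2-3]
\end{tikzcd}\]
and observe that it commutes: the top-left square commutes because $\tau'\circ f_R=\tau$ (the structural composition in the given sequence), and the top-right square commutes trivially because the vertical arrows are identities.

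Next, I would verify the hypotheses of Theorem \ref{202410280007}. Since the vertical map between the middle columns is $\mathrm{id}_A$, which is surjective, Proposition \ref{202306011826} ensures that the diagram is a strong Lipman diagram and, in particular, a Lipman diagram. The map $f_R$ is unramified by assumption, $f_A=\mathrm{id}_A$ is radicial (the identity is trivially radicial, since the canonical map $A\otimes_A A\to A$ is an isomorphism so its kernel is a nil ideal, and condition (c) of Theorem \ref{202411171813} applies), and $f=\mathrm{id}_B$ is obviously surjective.

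Therefore Theorem \ref{202410280007} applies and yields $\mathrm{id}_B^{-1}(A^*_{B,R'})=A^*_{B,R}$, i.e. $A^*_{B,R}=A^*_{B,R'}$. There is no real obstacle here: all the work has been packaged into Theorem \ref{202410280007}, and this corollary is just the specialization to the trivial vertical maps $\mathrm{id}_A$ and $\mathrm{id}_B$, which is precisely what made the unramified part of Proposition \ref{202410262346} a natural target of the main theorem.
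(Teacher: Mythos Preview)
Your proposal is correct and follows exactly the paper's own argument: the paper also forms this diagram, observes it is a Lipman diagram with $f_R$ unramified, $\mathrm{id}_A$ radicial, and $\mathrm{id}_B$ surjective, and then invokes Theorem~\ref{202410280007}. Your version simply spells out a couple of justifications (Proposition~\ref{202306011826} for the Lipman property and Theorem~\ref{202411171813}(c) for radiciality of $\mathrm{id}_A$) that the paper leaves implicit.
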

	
	\begin{proof}
		
		Notice that 
		\[\begin{tikzcd}
			R & A & B \\
			{R'} & A & B
			\arrow["\tau", from=1-1, to=1-2]
			\arrow["{f_R}"', from=1-1, to=2-1]
			\arrow["g", from=1-2, to=1-3]
			\arrow["{\textrm{id}_A}", from=1-2, to=2-2]
			\arrow["{\textrm{id}_B}", from=1-3, to=2-3]
			\arrow["{\tau'}"', from=2-1, to=2-2]
			\arrow["g"', from=2-2, to=2-3]
		\end{tikzcd}\] is a Lipman diagram, where $f_R$ is unramified, $\id_A$ is radicial and $\id_B$ is surjective. Thus, the result is a direct consequence of Theorem \ref{202410280007}.
	\end{proof}

	\section*{Acknowledgements}

	Thiago da Silva is funded by CAPES grant number 88887.909401/2023-00 and CAPES grant number 88887.897201/2023-00.

	\vspace{0.5cm}
	
	\textsc{Thiago da Silva (Federal University of Espírito Santo)}
	
	thiago.silva@ufes.br
	
\end{document}